\newcommand{\bigslant}[2]{{\raisebox{.2em}{$#1$}\left/\raisebox{-.2em}{$#2$}\right.}}
\newcommand{\ol}{\overline}
\newcommand{\Z}{\mathbb Z}
\newcommand{\F}{\mathbb F}
\newcommand{\K}{\mathbb{K}}
\renewcommand{\P}{\mathbb P}
\newcommand{\A}{\mathbb A}
\newcommand{\calO}{\mathcal{O}}
\newcommand{\calC}{\mathcal{C}}
\newcommand{\calB}{\mathcal{B}}
\newcommand{\frakM}{\mathfrak{M}}
\newcommand{\Gal}{\mathrm{Gal}}
\newcommand{\lto}{\longrightarrow}
\newcommand{\Id}{\mathrm{Id}}
\newcommand{\Div}{\mathrm{Div}}
\newcommand{\absdeg}{\mathrm{absdeg}}
\newcommand{\essdeg}{\mathrm{essdeg}}
\newcommand{\divi}{\mathrm{div}}
\newcommand{\PGL}{\mathrm{PGL}_2}
\newcommand{\Fqq}{\F_{8}}
\newcommand{\p}{Q}
\newcommand{\Pp}{R}
\newcommand{\Bp}{B}
\newcommand{\Zp}{Z}
\renewcommand{\S}{P}
\newcommand{\ti}{\zeta}  %variabile per estennzioni di campi finiti
\newcommand{\paramdeg}{\varepsilon}
\newcommand{\totaldeg}{\delta}
\newcommand{\smt}[4]{\left( \begin{smallmatrix} #1 &#2\\ #3 &#4\\ \end{smallmatrix} \right) }
\newcommand{\spv}[2]{\left[ \begin{smallmatrix} #1 \\ #2 \end{smallmatrix} \right]}
\newcommand{\mt}[4]{\left( \begin{matrix} #1 &#2\\ #3 &#4\\ 
\end{matrix} \right) }
\newcommand{\PGLq}{\mathrm{PGL}_2(\F_q)}
\newcommand{\CR}{\mathrm{XR}}
\newcommand{\changee}[1]{#1}
\newtheorem{Theorem}[subsection]{Theorem}
\newtheorem{Proposition}[subsection]{Proposition}
\newtheorem{Lemma}[subsection]{Lemma}
\newtheorem{Corollary}[subsection]{Corollary}
\newtheorem{subLemma}[subsubsection]{Lemma}
\newtheorem{subClaim}[subsubsection]{Claim}
\theoremstyle{remark}
\newdefinition{Definition}[subsection]{Definition}
\newdefinition{Remark}[subsection]{Remark}
\newdefinition{subDefinition}[subsubsection]{Definition}
\newdefinition{subRemark}[subsubsection]{Remark}
\newenvironment{change}{}
\let\c@equation=\c@subsubsection
\newenvironment{eqn}{\refstepcounter{subsection}
$$}{\leqno{\rm(\thesubsection)}$$\global\@ignoretrue}
\newenvironment{subeqn}{\refstepcounter{subsubsection}
$$}{\leqno{\rm(\thesubsubsection)}$$\global\@ignoretrue}
\newenvironment{prf}[1]{\trivlist
\item[\hskip \labelsep{\it
	#1\hspace*{.3em}}]}{~\hspace{\fill}~$\square$\endtrivlist}
\newenvironment{proof}{\begin{prf}{\bf Proof}}{\end{prf}}
\begin{document}
\begin{frontmatter}
\title{A provably quasi-polynomial algorithm for the discrete logarithm problem in finite fields of small characteristic}
%\author{Guido Lido \\ %\href{mailto:guidomaria.lido@gmail.com}{guidomaria.lido@gmail.com} \\
%\small{guidomaria.lido@gmail.com \quad Universit\`{a} di Roma Tor Vergata \quad  Roma, Italy}}
\author[1]{Guido Lido}
\ead[1]{guidomaria.lido@gmail.com}
\affiliation[1]{organization={University of Roma Tor Vergata},
addressline={Via della Ricerca Scientifica, 1},
postcode={00133},
city={Roma},
country={Italy}}
%\date{}
%\maketitle	
\begin{abstract}
We describe a provably quasi-polynomial algorithm to compute discrete logarithms in the multiplicative groups of finite fields of small characteristic, that is finite fields whose characteristic is logarithmic in the order. We partially follow the heuristically quasi-polynomial algorithm presented by Barbulescu, Gaudry, Joux
and Thom\'e. The main difference is to use a presentation of the finite field based on elliptic curves: the abundance of elliptic curves ensures the existence of such a presentation. 
% Our approach is different from   
% This idea has been independently developed by Kleinjung and Wesoloski Our approach is different from theirs, while we have a finite small set of traps, allowing for more uniformity in the algorithm.

%\noindent 2010 Mathematical Subject Classification:  %14G05, 11G30.
\end{abstract}

%\begin{highlights}
%\item We \emph{prove} that the discrete logarithm problem is at most quasi-polynomial in a group $\F_{p^n}^\times$ where $p$ is a prime and $n$ is an integer larger than $p$ (finite fields of small characteristic).
%\item Our algorithm is an adaptation of a heuristically quasi-polynomial algorithm presented in 2014 by Barbulescu, Gaudry, Joux and Thom\'e, but we use elliptic curves to find a presentation of such fields, which allows us to prove that the algorithm can be applied to all finite fields of small characteristic, or ``slightly larger'' fields. This is the main idea behind this article.
%\item The quasi-polynomiality of the algorithm is reduced to finding points on certain varieties over finite fields. This is achieved using Weil's conjecture, a little Galois theory and some cumbersome computation.
%\end{highlights}

\begin{keyword}
discrete logarithm, finite fields, elliptic curves, quasi-polynomial, algebraic curves.
%% keywords here, in the form: keyword \sep keyword

%% PACS codes here, in the form: \PACS code \sep code

\MSC 11T06, 11T71, 14H52, 11S20.  
%% MSC codes here, in the form: \MSC code \sep code
%% or \MSC[2008] code \sep code (2000 is the default)
\end{keyword}

\end{frontmatter}

\section{Introduction}

A general formulation for the discrete logarithm problem is the following: given a group $G$, a generator $g \in G$ and another element $h \in G$, find an integer $z$ such that 
$g^z = h$. The hardness of this problem, which depends on the choice of $G$, is relevant for public-key cryptography since the very beginning of it \cite{DifHel}.

We are concerned with the cases where $G$ is the multiplicative group of a finite field of \emph{small characteristic}, which, for us, means a field of characteristic $p$ and cardinality $p^n$ for some integers $n > p$. Our main result is the following.
\begin{Theorem}\label{maintheorem1}
There exists a probabilistic algorithm, described in Section \ref{sec_algo}, that solves the discrete logarithm problem in $K^\times$ for all finite fields $K$ of small characteristic in expected time
\[
( \log \# K)^{O(\log \log \# K )} \,.
\]
\end{Theorem}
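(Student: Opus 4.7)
The plan is to adapt the heuristic quasi-polynomial algorithm of Barbulescu--Gaudry--Joux--Thom\'e to a provable setting by replacing their $\P^1$-based presentation of the field with one coming from a carefully chosen elliptic curve. The abundance of elliptic curves over small finite fields is the ingredient that converts the heuristic descent step into a proof.

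First I would perform a reduction. Given $K$ with $\#K = p^n$ and $n > p$, I embed $K$ into a slightly larger field $L$ of cardinality $p^N$ with $N = O(n)$ of a prescribed form; solving the discrete logarithm problem in $L^\times$ and then projecting to $K^\times$ costs only polynomial overhead. Concretely I take $L = \F_{q^{2k}}$ with $q = p^m$ of size roughly $n^{O(1)}$ and $k \le q+1$, and realise $L$ as the residue field at a closed point of degree $k$ on an elliptic curve $E$ over $\F_{q^2}$ whose $q$-power Frobenius acts on coordinates through a small-degree isogeny. Deuring-type counts of elliptic curves with prescribed trace of Frobenius guarantee that such a curve exists, and it can be located by random search in polynomial time.

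The algorithm then proceeds in the usual three stages. The factor basis consists of the rational functions on $E$ whose divisors have degree below some polynomially bounded threshold. Multiplicative relations among these are produced by combining the identity $T^q - T = \prod_{\alpha \in \F_q}(T - \alpha)$, evaluated at curve coordinates, with the explicit action of Frobenius on $E$; solving a sparse linear system over $\Z/(\#L^\times - 1)\Z$ then yields the discrete logarithm of every factor-basis element. In the descent stage, an arbitrary element of $L^\times$ is represented by a function on $E$ of some degree $D$, and a curve-theoretic identity is used to rewrite it as a product of functions of degree about $D/2$ times known factor-basis terms. Iterating for $O(\log n)$ levels lowers the degree to the factor-basis threshold, and assembling the logarithms along the descent tree gives the desired answer.

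The main obstacle is to make the descent step provable rather than heuristic. In the Barbulescu--Gaudry--Joux--Thom\'e algorithm one must assume that certain random polynomials split into enough small-degree factors. On an elliptic curve the analogous statement becomes a genuine counting problem: the family of descent identities parametrised by $\PGL(\F_q)$ produces so many candidate decompositions of a given function that a pigeonhole argument, using the Weil bound to count functions of prescribed divisor shape on $E$, can be turned into a proof that at least one decomposition of the required form exists. Combining $O(\log n)$ descent levels of polynomially bounded branching with polynomial-time factor-basis linear algebra then yields the quasi-polynomial running time $(\log \#K)^{O(\log\log \#K)}$ stated in the theorem.
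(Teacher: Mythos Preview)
Your outline captures the overall architecture correctly --- embed $K$ into a field with an elliptic presentation, set up an index calculus with a factor base of low-degree divisors on $E$, and descend by halving degrees --- and this is indeed what the paper does. But the paragraph on making the descent provable is where the real difficulty lies, and your proposal does not engage with it.

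You write that the $\PGL(\F_q)$-family of descent identities gives enough candidates that ``a pigeonhole argument, using the Weil bound'' guarantees one works. This is essentially the BGJT heuristic restated, not a proof. The conditions a descent candidate $(f,\smt abcd)$ must satisfy are: (a) the polynomial $cT^{q+1}+dT^q+aT+b$ splits completely over the working field $k$, and (b) the interpolation constraints $\smt abcd\cdot f(\sigma^i\p)=-f^\phi(\sigma^i\p+P_0)$ coming from the target divisor. Condition (b) already pins the matrix down up to a small number of free parameters, and condition (a) holds for only about a $1/(q^3{-}q)$ fraction of matrices. The naive expected count of successes is therefore $O(1)$, and no pigeonhole argument can separate ``at least one'' from ``none''. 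Invoking a Weil-type bound does not help unless you first know that the variety $\calC$ parametrizing successful descents has a geometrically irreducible component defined over $k$ --- and establishing that is exactly the missing content.

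The paper spends Sections~\ref{sec:lemma}--\ref{sec:43} on this point: it constructs $\calC$ explicitly and proves, via an analysis of the splitting field of $cT^{q+1}+dT^q+aT+b$ over the function field of an auxiliary base variety (Lemma~\ref{lem:good_curves}), together with a separate and rather intricate irreducibility argument for that base in the $4$-to-$3$ step, that every geometric component of $\calC$ is defined over $k$. Only then do Lang--Weil estimates yield the required $k$-points. The trap points are precisely those where this irreducibility argument breaks down. Your pigeonhole sketch would need to be replaced by something of comparable depth.
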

An algorithm with the above complexity is called \emph{quasi-polynomial}. 

The first heuristic quasi-polynomial algorithm solving the discrete logarithm in small characteristic was presented by Barbulescu, Gaudry, Joux and Thom\'e in \cite{BGJT}.
One of the main ideas is to look for a ``simple'' description of the Frobenius automorphism $\phi \colon K \to K$ and, if one can find such a simple description, to use it in an index calculus algorithm to find relations among the elements of the factor base more easily. This idea was originally contained in Joux's paper \cite{Joux},
\changee{ and it was the starting point of record computations and further refinements, among which \cite{BGJT}, \cite{ZKG}, \cite{comp1}, \cite{comp2}, \cite{compFrancisco1}, \cite{JP} and \cite{gologlu}. In particular, the authors of \cite{gologlu} are the first to substitute the $\rm{PGL_2}$-action  used in \cite{Joux} with \cite{HidePGL2}: in loc. cit. Bluher characterizes the splitting of certain polynomials in a way that also helps us proving that a certain type of multiplicative relations can always be found.}

\changee{
The ideas used in \cite{Joux} to generate multiplicative relations are used differently in \cite{BGJT}, devising a descent procedure that makes the relation generation more efficient, hence the algorithm quasi-polynomial. Another algorithm was then presented in \cite{ZKG}, where the descent procedure is formalized more precisely and its success is rigorously proven using \cite{HidePGL2}}: the complexity is proven to be quasi-polynomial when there exists a ``simple'' description of the Frobenius automorphism (in loc. cit. the notion of ``simple'' is more precise than in \cite{BGJT}). In particular, we could deduce Theorem \ref{maintheorem1} if we knew that all finite fields of small characteristic $K$ can be embedded in a slightly larger field $K'$ admitting a presentation as in \cite{ZKG}. We are not aware of any proof of this fact, despite computations like \cite[Table 1]{Joux} support it and \cite{Micheli} gives a partial answer.

The author's first incomplete attempt to prove Theorem \ref{maintheorem1} is his master's thesis \cite{tesi}, which already contained the main idea of this article: using a different presentation for the finite field, stated in terms of the action of the Frobenius on an elliptic curve, and adapting the algorithms. Such ``elliptic presentations'' were first introduced in \cite{EllPres} and, since over a finite field $\F_q$ there are many non-isogenous elliptic curves, it is easy to prove that all finite fields of small characteristic can be embedded in a slightly larger field admitting such a presentation. The algorithm in \cite{tesi} adapts the approach in \cite{ZKG} to finite fields with an elliptic presentations, but the proof of its correctness and quasi-polynomiality was not complete.

The idea of using elliptic presentations in this context has been independently developed by Kleinjung and Wesolowski, who prove  Theorem \ref{maintheorem1} in \cite{KW2}. 
\changee{They provide an algorithm that works with ``elliptic curve models'', whose definition only differs from Definition \ref{def:ell_pres} by a few details.  In Theorem 2.4. in loc. cit. they prove, with a similar approach to Proposition \ref{propfieldswithpresentation}, that elliptic curve models can be found up to relatively small extension.}
Other than this, the main difference between the present approach and the one in \cite{KW2} is the proof of correctness and the quasi-polynomiality of the algorithms. In both approaches it is a matter of showing the irreducibility of certain curves:  \cite{KW2} describes those curves as components of certain fibered products (see also \cite{KW}), in this paper we prove irreducibility by some Galois theory over function fields. \changee{In both approaches cumbersome computations appear; we tried to confine them} in the proofs of Proposition \ref{prop:we_can_use_other_prop} and Claims \ref{hope_distinct_points}, \ref{hope_no_conics}, \ref{claim_43_use_lemma}. Finally, another small difference between \cite{KW2} and the present work is that in our algorithm the number of ``traps'' is finite and small, so that they can be included in the factor base, while in  \cite{KW2} there are infinitely many ``traps'' that need to be avoided by the algorithm. 

% There are partial results (see [11]) and some computational evidence
%(see [3, Table 1]) in this direction but the author is not aware of any full
%proof.
%2
%In this paper, instead of proving that the presentation in [10] is always
%applicable, we use a di?erent presentation of the ?nite ?eld, stated in terms
%of the action of the Frobenius on an elliptic curve. Such elliptic presentations
%were ?rst introduced in [13]. We prove Theorem 1.1 by adapting the algo-
%rithm in [10] to ?eld admitting an elliptic presentation and we prove that all
%?nite ?elds of small characteristic can be embedded in a slightly larger ?eld
%admitting it. The latter part follows quite naturally from the existence of
%numerous non-isogenous elliptic curves over a ?nite ?eld.
%The author's ?rst incomplete attempt to prove Theorem 1.1 is his master's
%thesis [12], which already contained the idea of using elliptic presentations,
%but the proof of the correctness and quasi-polynomiality of the algorithm
%was not complete.
%The idea of using elliptic presentations in this context has been indepen-
%dently developed by Kleinjung and Wesolowski, who proved Theorem 1.1
%in [14]: they also provide an algorithm that solves the discrete logarithm
%in ?nite ?elds admitting an ?elliptic curve model? (it di?ers from ?elliptic
%presentation? by a few details).

The practical side of this story has been investigated in \cite{JEll}, where Joux and Pierrot propose a practical algorithm for the discrete logarithm based on elliptic presentations and apply it successfully in $\mathbb F_{3^{1345}}^\times$. Their experiments indicate that the efficiency of this algorithm is inferior, yet comparable, to the one in \cite{JP}. %, the (heuristically) fastest currently known.
\changee{Indeed, while elliptic presentations are useful to \emph{prove} quasi-polynomiality, the record computations have been obtained with ``linear'' or ``quadratic'' presentations as in \cite{ZKG}: in \cite{granger2021computation} we see the computation of logs over the field with $2^{30750}$ elements, and in characteristic $3$ we cite \cite{compFrancisco2}, containing computations over the field with $3^{3054}$ elements. }

\paragraph{Structure of the paper}
The structure of the article is as follows. In Section \ref{sec:ell_pres} we define elliptic presentations and we prove that all finite fields of small characteristic can be embedded in a slightly larger field admitting an elliptic presentation. 
Section \ref{sec:traps} is about ``traps'' (cases that need different treatment in the algorithm).
In Section \ref{sec:divs} we describe the general setup of our algorithm and we explain how to pass from a factor base made of irreducible polynomials in $\F_q[x]$ to a factor base made of irreducible divisors on an elliptic curve $E/\F_q$. 
In Section \ref{sec_algo} we give our algorithm, stated in terms of a descent procedure that is described in Section \ref{sec:idea_descent}. A more precise statement about the complexity of the main algorithm is given in Theorem \ref{maintheorem3}. Our descent procedure consists of two steps, presented and analysed in Section \ref{sec:idea_descent} under an assumption on certain varieties. These assumptions are proven in the subsequent sections: Section \ref{sec:lemma} gives a technical lemma, Section \ref{sec:3-2} proves the assumptions needed for the second and easier step and  Section \ref{sec:43} proves the assumptions needed for the first step. \changee{Section \ref{sec:example} describes our setup through some explicit computations of elliptic presentations and of the descent procedure.}

\section{Elliptic presentations} \label{sec:ell_pres}
One of the main ideas in \cite{Joux} and in \cite{BGJT}, is to present a field $K$ using two subfields $\F_q\subsetneq \F_Q \subseteq K$ of order $q,Q$ (both ``small'' compared to $\# K$) and an element $x_1 \in K $ generating the extension $\F_Q\subset K$ such that the $q$-th Frobenius acts on $x_1$ in a simple way, namely $x_1^q=f(x_1)$ for some $f \in \F_q(x)$ of \changee{small} degree. We  now define a presentation based on a similar idea: describing $K$ as $\F_q(x_1,y_1)$ where $\F_q$ is a finite field of order $q$ ``small'' compared to $\# K$ and $x_1,y_1$ are two elements of $K$ on which the  $q$-th Frobenius acts in a ``simple'' way. 

Let $\F_q$ be a finite field of cardinality $q$, and let $K/\F_q$ be a field extension of degree $n$. Suppose there exists an elliptic curve $E/\F_q$ defined by a Weierstrass equation and a point $P_0 \in E(\F_q)$ of order $n$. Denoting $\phi$ the $q$-th Frobenius on the elliptic curve $E$, the map $E\to E$ given by $P \mapsto \phi(P) {-} P$ is surjective. Therefore there is a point $P_1 = (x_1,y_1)$ on $E$ such that $\phi(P_1)= P_1 + P_0$. Hence
\begin{eqn}\label{eq:proof_deg_n}
(x_1^{q^i}, y_1^{q^i}) = \phi^i(P_1)=P_1 + i\cdot P_0 \quad \text{for every }i\in \Z\,,
\end{eqn}
implying that the field extension $\F_q \subset \F_q(x_1,y_1)$ has degree $n$. Hence $\F_q(x_1,y_1)$ is isomorphic to $K$.
Moreover the $q$-th Frobenius acts on the pair $(x_1, y_1)$ in a ``simple'' way in the following sense: the addition formulas on $E$ give polynomials $f_1,f_2,f_3 \in \F_q(x,y)$ of small degree such that
$x_1^q = f_1(x_1,y_1)/f_3(x_1,y_1)$ and $y_1^q = f_2(x_1, y_1)/f_3(x_1,y_1)$.
With this heuristic in mind, we give the following definition.

\begin{Definition}\label{def:ell_pres}
Let $E/\F_q$ be an elliptic curve defined by a Weierstrass polynomial in $\F_q[x,y]$, let $P_0$ be a $\F_q$-point on $E$ and let $\phi\colon E \to E$ be the $q$-th Frobenius. An \emph{$(E/\F_q,P_0)$-presentation} of a finite field $K$ is an ideal $\frakM \subset \F_q[x,y]$ such that 
\begin{enumerate}[label=(\roman*)]
\item\label{def:ell_pres_1} $K$ is isomorphic to $\F_q[x,y]/\frakM$ with a chosen isomorphism;
\item there exists a point $P_1 =(x_1,y_1)$ on $E$ such that $\phi(P_1)=P_1 + P_0$ and such that  $\frakM = \{ f \in \F_q[x,y]: f(x_1, y_1)=0 \}$;
\item\label{def:ell_pres_3} $q>2$ and $[K:\F_q]>2$.
\end{enumerate}
\end{Definition}
We sometimes omit the dependence on $(E/\F_q,P_0)$ and we simply write ``elliptic presentation''. 
The hypothesis $q>2$ is used in the proof of Claim \ref{hope_distinct_points}, while the hypothesis $[K:\F_q]>2$ is used in the following Remark, which is used in step $3$ of the main algorithm to lift elements of $K$ to polynomials in $\F_q[x]$, in particular  to polynomials of degree a power of two. 

\begin{Remark}\label{rem:mu}
If $\frakM$ is an elliptic presentation, the inclusion $\F_q[x] \to \F_q[x,y]$ induces an isomorphism $\F_q[x]/\mu \cong \F_q[x,y]/\frakM$ for a certain $\mu \in \F_q[x]$. 

Proving this is equivalent to proving that $x$ generates the field extension $\F_q \subset \F_q[x,y]/\frakM$. Using the notation in Definition \ref{def:ell_pres}, this is equivalent to proving that $\F_q(x_1)$ is equal to $\F_q(x_1,y_1)$. If, for the sake of contradiction, this is not the case, then the Weierstrass equation satisfied by $x_1$ and $y_1$ implies that the extension $\F_q(x_1)\subset \F_q(x_1,y_1)$ has degree $2$, hence $[\F_q(x_1):\F_q]=\tfrac n2$, where $n:=[\F_q(x_1,y_1):\F_q] = [K:\F_q]$. 
Using Equation \ref{eq:proof_deg_n}, we deduce that 
\[
x(P_1) = x_1 = x_1^{q^{n/2}} = x(\phi^{n/2}P_1) = x(P_1 + \tfrac n2 P_0) \quad \implies \quad  P_1 + \tfrac n2 P_0 = \pm P_1\,.
\]
Since, by Equation \ref{eq:proof_deg_n}, the order of $P_0$ is equal to $n$, we have $P_1 + \tfrac n2 P_0 = -P_1$, implying that  $2P_1$ lies in $E(\F_q)$. Therefore $P_0$ has order $2$, contradicting the hypothesis $n = [K:\F_q]>2$ in \ref{def:ell_pres_3}.
\end{Remark}

We now show that any finite field $K$ of small characteristic can be embedded in a ``slightly larger'' field admitting an elliptic presentation with $q$  ``small'' compared to $\# K$.

\begin{Proposition}\label{propfieldswithpresentation}
For any finite field $K$ of small characteristic there exists an extension $K \subset K'$ having an elliptic presentation  $\frakM \subset \F_q[x,y]$ of $K'$ such that 
\[
\log (\# K') \leq 13 \log (\# K) \log \log(\# K) \quad \text{and}\quad  q \leq  \log (\# K')^{4} \,.
\]
Moreover such $K'$ and its presentation can be computed in polynomial time in $\log(\# K)$.
\end{Proposition}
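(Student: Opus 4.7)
The plan is to construct $K'$ as $\F_{q^n}$ where $q = p^a$ is a carefully chosen power of the characteristic $p := \mathrm{char}(K)$, and to realize the presentation via an elliptic curve $E/\F_q$ carrying a rational point $P_0 \in E(\F_q)$ of order exactly $n := [K:\F_p]$. Given such $E$ and $P_0$, the discussion preceding Definition \ref{def:ell_pres}, based on surjectivity of the isogeny $\phi - 1$ on $E$, produces $P_1 \in E(K')$ with $\phi(P_1) = P_1 + P_0$, so that $\frakM := \{f \in \F_q[x, y] : f(x_1, y_1) = 0\}$ is an $(E/\F_q, P_0)$-presentation of $K' = \F_q(x_1, y_1)$; moreover $K \subset K'$ because $[K:\F_p] = n$ divides $[K':\F_p] = an$. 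The small-characteristic hypothesis $n > p$ will force $n \geq 3$ and $a \geq 2$, so the conditions \ref{def:ell_pres_1}--\ref{def:ell_pres_3} of the definition will be automatic.

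Concretely, I would let $a$ be the least positive integer with $p^a \geq 16 n^2$. Since $p < n$, this yields $n^2 \leq q = p^a \leq 16 p n^2 \leq 16 n^3$ and $a \leq 2 \log_p n + O(1)$, so the Hasse--Weil interval $[q + 1 - 2\sqrt q,\; q + 1 + 2\sqrt q]$ has length at least $8\sqrt q \geq 32 n$ and contains many multiples of $n$. The heart of the argument is to find inside this interval some $N$ divisible by $n$ that arises as the group order of an elliptic curve $E/\F_q$ whose $\F_q$-rational group additionally has a point of order exactly $n$. For this I would combine the Deuring--Waterhouse classification of admissible Frobenius traces (asserting that for every $N$ in the Hasse interval with $\gcd(N - q - 1, p) = 1$ there exists $E/\F_q$ with $\#E(\F_q) = N$) with the structure theorem $E(\F_q) \cong \Z/n_1 \oplus \Z/n_2$, $n_1 \mid n_2$, $n_1 \mid q - 1$ from the Weil pairing; choosing $N$ with $\gcd(N - q - 1, p) = 1$ and $\gcd(n, q - 1)$ suitably controlled forces $n \mid n_2$ and therefore the existence of the desired $P_0$. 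When $p \mid n$, the $p$-part must be treated separately, using ordinary curves whose étale $p$-torsion over $\F_q$ is maximally cyclic.

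The size bounds are then routine: $\log \#K' = an \log p = a \log \#K$ with $a = O(\log n / \log p) = O(\log\log \#K)$ yields $\log \#K' \leq 13 \log \#K \log\log \#K$ for $\#K$ larger than an absolute constant (smaller cases being settled directly), while $q \leq 16 n^3 \leq (n \log p)^4 \leq \log(\#K')^4$ follows by elementary estimates. For computability, Schoof's algorithm computes $\#E(\F_q)$ in time $(\log q)^{O(1)}$, which is polynomial in $\log \#K$, so a randomized search over elliptic curves $E/\F_q$ locates a valid $E$ in expected polynomial time. The main obstacle is the existence argument of the second paragraph: upgrading divisibility of $\#E(\F_q)$ by $n$ to the existence of an $\F_q$-rational point of order exactly $n$ requires controlling the full group structure through the Weil pairing, and the case $p \mid n$ is the most delicate point, since the $p$-rank of $E$ then constrains the maximal order of an $\F_q$-point of $p$-power order and forces a careful use of ordinary curves (possibly with a slightly larger exponent $a$) to absorb the $p$-adic contribution.
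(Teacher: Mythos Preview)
Your overall strategy matches the paper's: pick $q=p^a$ with $a\approx 2\log_p n$, find an elliptic curve $E/\F_q$ carrying an $\F_q$-rational point $P_0$ of order $n$, and read off the presentation from a preimage of $P_0$ under $\phi-1$. The size estimates and the final assembly are essentially the same.

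Where you diverge is precisely at the step you flag as the main obstacle. The paper bypasses your Weil-pairing analysis and the $p\mid n$ case distinction entirely by invoking a theorem of R\"uck \cite[Theorems~1a,~3]{Ruc}: for any integer $N$ in the Hasse interval with $N\not\equiv 1\pmod p$, there exists $E/\F_q$ with $E(\F_q)$ \emph{cyclic} of order $N$. The paper then picks a multiple $n_1$ of $n$ in $[q-\sqrt q+1,\,q+1]$ (one exists since $\sqrt q\ge n$), replaces it by $n_1+n$ if $n_1\equiv 1\pmod p$ (this adjustment always succeeds: when $p\mid n$ one already has $n_1\equiv 0$, and when $p\nmid n$ one gets $n_1+n\equiv 1+n\not\equiv 1$), and applies R\"uck to obtain a cyclic group of that order, from which a point of order $n$ is immediate. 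Your route via Deuring--Waterhouse plus the constraint $n_1\mid q-1$ is not wrong in spirit, but Deuring--Waterhouse only fixes $\#E(\F_q)$, not the isomorphism type of the group; to guarantee $n\mid n_2$ for \emph{some} curve with the given order you would effectively need R\"uck's classification anyway, and trying to control $\gcd(n,q-1)$ by varying $a$ is both awkward and unnecessary.

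On computability: since $q$ is already polynomial in $\log\#K$ (indeed $q<n^4$), the paper simply enumerates all $O(q^6)$ Weierstrass equations over $\F_q$ and all their $\F_q$-points by brute force, which is deterministic and polynomial. Schoof's algorithm and a randomized search over curves are overkill in this regime.
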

\begin{proof}
Let  $\# K = p^n$ for a prime $p$ and an integer $n> p$.  Put $k_0:=\lceil\log_p n \rceil$ and $q:=p^{2k_0}$, so that $n$ has a multiple $n_1$ in the interval $[q-\sqrt{q}+1, q+1 ]$. If $n_1 \equiv 1\bmod p$ we define $n_2:=n_1+n$, otherwise we define $n_2:=n_1$. Since $n_2$ is an integer contained in the Hasse interval $ [q - 2\sqrt q + 1 ; q+2\sqrt q+1]$ that is not congruent to $1$ modulo $p$, by
 \cite[Theorems 1a, 3]{Ruc}\footnote{there is a small typo in Theorem 1b of loc. cit., which is based on \cite[Theorem 4.2]{waterhouse1969abelian}: in item (a) there, not all orders in $K$ are possible endomorphism rings, but all orders containing $\pi$ are. This does not affect Theorems 1a, 3} there exists an elliptic curve $E/\F_q$ whose group of rational points $E(\F_q)$ is cyclic of order $n_2$. Since $n$ divides $n_2$, there exists a point $P_0 \in E(\F_q)$ of order $n$.

We can assume $E$ is defined by a Weierstrass polynomial. 
%Since the map $P \mapsto \phi(P ){-} P $ is surjective, there exists a point $(x_1, y_1)=P_1 \in E(\ol{\F_q})$ such that $\phi(P_1)=P_1 + P_0$.
\changee{
The map $\phi-\Id\colon E \to E$, mapping $P \mapsto \phi(P){-} P $ is surjective, since it is an algebraic map of curves and, being not constant, its image has dimension one.} 
In particular there exists a point $(x_1, y_1)=P_1 \in E(\ol{\F_q})$ such that $\phi(P_1)=P_1 + P_0$.

Under the definition
\[
\frakM:= \{ f \in \F_q[x,y]: f(x_1, y_1)=0 \}\,, \quad  K':=\F_q(x_1, y_1) \subset \ol{\F_q}\,, 
\]
it is clear that the map $\F_q[x,y]\to K$ sending $x\mapsto x_1, y \mapsto y_1$ induces an isomorphism $\F_q[x,y]/\frakM \cong K'$.
To prove that $\frakM$ is an elliptic presentation of $K'$, we are left to show that  $q>2$ and $[K':\F_q]>2$: the first is is a consequence of the inequality $k_0=\lceil\log_p n \rceil >1$, the latter is true since, by (\ref{eq:proof_deg_n}), the degree of $\F_q \subset K'$ is equal to the order $n$ of $P_0$, and $n>p\ge 2$.

Since $[K':\F_q]=n$ divides $[K':\F_p]$, the field $K'$ has a subfield with $p^n$ elements. In other words $K$ can be embedded in $K'$. Moreover we have
\[
\begin{aligned}
\log(\# K') & = n \log q < 2n \log(p) (\log_p(n) {+} 1)  \leq 4 \changee{n} \log(p) \log(n) \\
&  \leq 13 \log(\# K) \log \log (\# K) , \\
q & =p^{2 \lceil \log_p n \rceil} < p^{2+2\log_p n} = (pn)^2 \leq n^4 < \log(q^n)^4 = \log(\# K')^4	\,. \qquad \qquad 
\end{aligned}
\]
We now prove that it is possible to compute such $K'$ and $\frakM$ in polynomial time in $\log(\# K)$. We describe a procedure following the abstract part of the proof. Computing $k_0,q,n_1$ is easy. We can construct a field $\F_q$ by testing the primality of all polynomials of degree $2k_0$ over~$\F_p$ until an irreducible $\nu$ is found and define $\F_q= \F_p[T]/\nu$; since there are less than $n^2$ polynomials of this type, this takes polynomial time. Similarly we can find an elliptic curve $E$ with an $\F_q$-point $P_0$ of order $n$ in polynomial time, by listing all possible Weierstrass equations (there are less than $q^6$), testing if they define an elliptic curve and, when they do, enumerating all their $\F_q$-points. Then, using the addition formula on $E$, we write down the ideal $I \subset  \F_q[x,y]$ whose vanishing locus inside $\A^2$ is the set of points $P=(x, y) \in E(\ol{\F_q})$ such that $\phi(P) = P + P_0$. As we showed before, the set of such points is non-empty, hence $I$ is a proper ideal and we can find a maximal ideal $\frakM$ containing $I$. We don't need general algorithms for primary decomposition since we can take $\frakM=(\mu(x), \lambda(x,y))$, with $(\mu)$ being an 
irreducible factor of the generator of the ideal $J{\cap}\F_q[x]$ and $\lambda(x,y)$ being an irreducible factor of the image of the Weierstrass equation of $E$ inside $(\F_q[x]/\mu)[y]$. Since the Weiestrass polynomial is monic in $y$, we can assume that $\lambda$ is monic in $y$ too. Hence there is a point $P_1 =(x_1, y_1)$ in the vanishing locus of $(\mu(x), \lambda(x,y))=\frakM$. Since $\frakM$ contains $I$, the point $P_1$ lies on $E$ and satisfies $\phi(P_1)=P_1+P_0$. The maximality of $\frakM$ implies that $\F_q[x,y](\frakM) = \F_q(x_1,y_1)=K'$. Hence $\frakM$ is the elliptic presentation we want.
\end{proof}

\begin{Remark}
\begin{change}
	A small difference between the proof of \cite[Theorem 2.4]{KW2} and the above proof is that the $q$ above is about $O(n^{2})$ instead of $ O(n^4)$ as in \cite{KW2}.  Indeed we use  R\"uck's refinement \cite[Theorem 3]{Ruc} of Waterhouse's result  \cite[Theorem 4.1]{waterhouse1969abelian}. In broad terms, \cite[Theorem 3]{Ruc} states that not only the cardinalities of ordinary elliptic curves over a finite field are all the possible ``ordinary cardinalities'' in the Hasse interval, moreover almost all possible group structures are reached.
\end{change}
\end{Remark}

\begin{notation}
For the rest of the article $\F_q$ is a finite field with $q$ elements, $\ol{\F_q}$ is its algebraic closure, $K$ is a finite extension of $\F_q$, the ideal $\frakM\subset \F_q[x,y]$ is a $(E/\F_q,P_0)$-presentation of $K$, the map $\phi\colon E \to E$ is the $q$-th Frobenius and $P_1=(x_1, y_1) \in E(\ol{\F_q})$ is a point such that $\frakM= \{ f \in \F_q[x,y]: f(x_1, y_1)=0 \}$. By $O_E$ we denote the neutral element of $E(\F_q)$.
\end{notation}

\section{Traps}\label{sec:traps}
As first pointed out in \cite{traps}, there are certain polynomials, called ``traps'' for which the descent procedure in \cite{BGJT} does not work. In \cite{BGJT} such traps are dealt with differently than the other polynomials. In \cite{ZKG} the notion of ``trap'' is extended: it includes not only polynomials for which the descent procedure is proven not to work, but also polynomials for which the strategy to prove the descent's correctness does not work. In \cite{ZKG} traps are avoided by the algorithm.

In the following sections we describe a descent procedure stated in terms of points and divisors on $E$ and there are certain points in $E(\ol{\F_q})$ that play the role of ``traps'', as in \cite{ZKG}. The definition of this subset of $E(\ol{\F_q})$ is rather cumbersome, but it is easy to deduce that we have less than $15q^4$ traps. In particular, in contrast to \cite{ZKG}, we can include them in the factor base without enlarging it too much.
\begin{Definition}\label{def:traps}
A point $P\in E(\ol{\F_q})$ is a \emph{trap} if it satisfies one of the following conditions:
\[
\begin{aligned}
2P =0\,, \quad \text{ or }\quad (2\phi-\Id)(\phi^2-\phi+\Id)(P) = P_0\,, \\ \text{ or } \quad (2\phi-\Id)(\phi+\Id)(P) = 2P_0 \quad 
\text{or }	(\phi^4-\Id)(P) = 4P_0\,, \\ \text{ or } 2(\phi^3-\Id)(P) = 6P_0\,, \quad \text{ or }\quad  (2\phi+\Id)(\phi-\Id)(P) = 2P_0\,.
\end{aligned}
\]
\end{Definition} 

In (\ref{eq:use_traps}) and at the beginning of the proof of Claim \ref{hope_distinct_points} we explain why these points interfere with our strategy of proof.

\section{Divisors and discrete logarithm}\label{sec:divs}
We recall that the Galois group of $\F_q$ acts on the group of divisors on $E$ by the formula
\[
\sigma \left(\sum_{P \in E(\ol{\F_q})} n_P P \right) =\sum_{P \in  E(\ol{\F_q})} n_P \, \sigma(P)\,.
\]
For any algebraic extension $\F_q \subset k$ we denote $\Div_k(E)$ the set of divisors \emph{defined over~$k$}, namely the divisors $D$ such that $\sigma D = D$ for all $\sigma \in \Gal(\ol{\F_q}/k)$. We say that a divisor is \emph{irreducible over~$k$} if it is the sum, with multiplicity $1$, of all the $\Gal(\ol{\F_q}/k)$-conjugates of some point $P \in E(\ol{\F_q})$. Every divisor defined over~$k$ is a $\Z$-combination of irreducible divisors over~$k$. 
We refer to \cite[Chapter $2$]{Sil} for the definitions of principal divisor and support of a divisor.

We need two quantities to describe the ``complexity'' of a divisor.
The first one is the \emph{absolute degree} of a divisor, 	 defined as as
\[ 
\absdeg\left( \sum_{P \in E(\ol{\F_q})} n_P (P)\right) := \sum_{P \in E(\ol{\F_q})} |n_P| \,. 
\]
The second quantity is analogous to the degree of the splitting field of a polynomial, but we decide to ``ignore'' trap points. 
Given a divisor $D\in \Div_k(E)$, we denote $D^{\mathrm{noTrap}}$ the part of $D$ that is supported outside the set of trap points, which is also defined over $k$ since the set of trap points is $\Gal(\ol{\F_q}/\F_q)$-invariant. We define the \emph{essential degree of $D$ over~$k$} to be the least common multiple of the degrees of the irreducible divisors appearing in  $D^\mathrm{noTrap}$. In other words, if we denote as $k(D^{\mathrm{noTrap}})$ the minimal algebraic extension $\widetilde k \supset k$ such that the support of $D$ is contained in $E(\widetilde k)$, then
\begin{eqn}\label{eq:essdeg}
\essdeg_{k} (D):= [k(D^{\mathrm{noTrap}}): k]  \,.	
\end{eqn}

If $D^\mathrm{noTrap} =0$ we take $\essdeg_{k}(D) = 1$.

Now consider the discrete logarithm problem in a field having an elliptic presentation $\frakM$. First of all, if $q$ is small compared to $\# K$, for example $q \leq (\log K)^{4}$ as in Proposition \ref{propfieldswithpresentation}, and if we are able to compute discrete logarithms in $K^\times / \F_q^\times$ in quasi-polynomial time, then we can also compute discrete logarithms in $K^\times$ in quasi-polynomial time. Hence in the rest of the article we are concerned with computing discrete logarithms in $K^\times / \F_q^\times$.  

Denoting $\F_q[x,y]_\frakM$ the localization of $\F_q[x,y]$ at the maximal ideal $\frakM$, we have 
\[
K \, \cong \, \F_q[x,y]/\frakM \,\cong\, \F_q[x,y]_\frakM/\changee{\frakM} \,.
\]
An element $f$ of $(\F_q[x,y]_\frakM)^\times$ defines a rational function on $E$ which is defined over~$\F_q$ and regular and non-vanishing in $P_1$.
We represent elements in $K^\times /\F_q^\times$ with elements of $\F_q(E)$ that are regular and non-vanishing on $P_1$. 

Let $g,h$ be elements of $\F_q(E)$ both regular and non-vanishing on $P_1$ and let us suppose that $g$ generates the group $K^\times/\F_q^\times$. Then the logarithm of $h$ in base $g$ is a well defined integer modulo $\tfrac{\# K{-}1}{q{-}1}$ that we denote $\log_{\frakM,g}(h)$ or simply $\log h$. Since we are working modulo $\F_q^\times$, the logarithm of $h$ only depends on the divisor of zeroes and poles of $h$: if $h' \in \F_q(E)$ satisfies $\divi(h) = \divi(h')$, then $h/h' \in \F_q^\times$ and consequently $\log(h) = \log(h')$. Hence, putting
\[
\log (\divi(h)) := \log (h) \,,
\]
we define the discrete logarithm as \changee{an} homomorphism whose domain is the subgroup of $\Div_{\F_q}(E)$ made of principal divisors, supported outside $P_1$ and whose image is $\Z/(\tfrac{\# K{-}1}{q{-}1})\Z$.
The kernel of this morphism is a subgroup of $\Div_{\F_q}(E)$, hence it defines the following equivalence relation on $\Div_{\F_q}(E)$
\begin{eqn}\label{eq_equiv_divisors}
\begin{aligned}
	D_1\sim D_2  & \iff D_1-D_2 \in \mathrm{Ker}(\log)  \\
	& \iff \exists f \in \F_q(E) \mbox{ such that }f(P_1) =1 \mbox{ and } \divi(f) = D_1 - D_2 \,.
\end{aligned}
\end{eqn}
We notice that this equivalence relation does not depend on $g$ and that, 
given rational functions $h_1, h_2 \in \F_q(E)$ regular and non-vanishing on $P_1$, we have  $\log h_1 = \log h_2$ if and only if $\divi(h_1) \sim \divi(h_2)$. Motivated by this, for all divisors $D_1,D_2 \in \Div_{\F_q}(E)$ we use the notation
\[
\log_\frakM D_1 = \log_\frakM D_2 \iff D_1\sim D_2 \,.
\]
Notice that we do not define the expression $\log_{\frakM} (D)$ or $\log_{\frakM,g}(D)$ for any $D$ in $\Div_{\F_q}(E)$, since the function $\log$ might not extend to a morphism $\Div_{\F_q}(E) \to \Z/(\tfrac{\# K{-}1}{q{-}1})\Z$. In our algorithm we use the	 equivalence relation (\ref{eq_equiv_divisors}) to recover equalities of the form $\log h_1 = \log h_2$. 

\section{The main algorithm} \label{sec_algo}
As in \cite{ZKG}, our algorithm is based on a descent procedure, stated in terms of divisors on $E$.

\begin{Theorem}\label{theo_descent}
There exists an algorithm, described in the proof, that takes as input an $(E/\F_q,P_0)$-presentation $\frakM$ and a divisor $D \in \Div_{\F_q}(E)$ such that $\essdeg_{\F_q}(D) = 2^{m}$ for some integer $m\ge \changee{9}$  and computes a divisor $D' \in \Div_{\F_q}(E)$  such that
\[  
\log_\frakM D= \log_\frakM D' \,, \quad (\essdeg_{\F_q} D') \mid 2^{m-1}\,, \quad \absdeg (D') \leq 4q^2 \absdeg D \,.
\]  
This algorithm is probabilistic and its expected runtime is polynomial in $q\cdot \absdeg(D)$.
\end{Theorem}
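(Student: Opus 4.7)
The plan is to realize the descent as a composition of two reduction steps, each lowering the essential degree by a controlled factor; these two steps are the content of Section~\ref{sec:idea_descent}. By linearity of the equivalence relation~(\ref{eq_equiv_divisors}) and additivity of $\absdeg$, one reduces to applying the descent separately to each $\F_q$-irreducible non-trap component of $D$ of degree exactly $2^m$: smaller components already have essential degree dividing $2^{m-1}$, and trap components can be carried through unchanged since they do not contribute to $\essdeg_{\F_q}$ by definition.

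The first step---whose irreducibility assumptions are proved in Section~\ref{sec:43}---replaces such a component by an equivalent divisor of essential degree dividing $3\cdot 2^{m-2}$ (a ``$4\!\to\!3$'' descent), and the second step, analysed in Section~\ref{sec:3-2}, then brings the essential degree down to divide $2^{m-1}$ (a ``$3\!\to\!2$'' descent). In each step one chooses a rational function $h\in\F_q(E)^\times$ from a cross-ratio-type construction built out of the Frobenius relation $\phi(P_1)=P_1+P_0$ together with an auxiliary element of $\PGL(\F_q)$; one then verifies that $\divi(h)$ has the form $D-D_1$ (respectively $D_1-D'$) modulo trap contributions, after which the normalization $h(P_1)=1$ together with~(\ref{eq_equiv_divisors}) yields the equality of logarithms. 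The absolute-degree bound $\absdeg(D')\le 4q^2\absdeg(D)$ is obtained by bounding the degree of each $h$ by $O(q)$ on $E$ and taking the product of the two factors of $2q$ arising from the two steps. Expected polynomial running time follows because each step amounts to sampling an element of $\PGL(\F_q)$ at random, computing the candidate $h$ and factoring its divisor in polynomial time, and retrying on failure; the probability of success at each trial is bounded below by an explicit positive constant, thanks to the irreducibility statements.

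The main obstacle---and the reason the proof is split across Sections~\ref{sec:lemma}, \ref{sec:3-2}, and~\ref{sec:43}---is showing that a generic choice of auxiliary element in $\PGL(\F_q)$ does yield an $h$ whose zero-and-pole divisor has the prescribed shape and essential-degree decomposition. This reduces to proving geometric irreducibility of certain explicit auxiliary curves, cut out on a product involving $E$ by the graph of Frobenius together with the cross-ratio relation, and to controlling the $\F_q$-Galois action on their function fields. This is exactly where the exclusion of traps from $\essdeg_{\F_q}$ and the hypotheses $q>2$ and $[K:\F_q]>2$ from Definition~\ref{def:ell_pres} enter: without them the relevant curves degenerate or split in ways that make the step fail. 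I expect this geometric and Galois-theoretic analysis, developed via function-field methods in the subsequent sections, to be the real heart of the argument; the present theorem is, conditional on those irreducibility assumptions, a matter of combining the two steps and tracking the degree bounds.
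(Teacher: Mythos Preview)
Your high-level decomposition into a $4\to 3$ step followed by a $3\to 2$ step matches the paper, but you are missing the central structural device of the proof: a base change to an intermediate field $k$. Writing $2^m=4l$, the paper passes to the unique subfield $k\subset\F_q(\p)$ with $[k:\F_q]=l$, so that the $\F_q$-irreducible divisor $D$ of degree $4l$ becomes, after regrouping conjugates, a divisor $\tilde D\in\Div_k(E)$ of essential degree exactly $4$ over $k$. The two descent steps (Propositions~\ref{firsthalfdescentprop} and~\ref{secondhalfdescentprop}) are then carried out \emph{over $k$}, producing a rational function $g\in k(E)$ and a matrix in $\PGL(k)$, not in $\F_q(E)$ and $\PGL(\F_q)$ as you write. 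One recovers an $\F_q$-rational function by taking the product of Galois conjugates $f=g\,g^{\sigma}\cdots g^{\sigma^{l-1}}$; for this product to satisfy $f(P_1)=1$ one needs $g(\tau(P_1))=1$ for \emph{every} $\tau\in\Gal(\ol{\F_q}/\F_q)$, not merely $g(P_1)=1$---this is condition~(\ref{eq:g=1_onGaloisconjugates}), and is precisely why the construction~(\ref{eq:def_g_from_f}) is arranged to give value $1$ at every $P$ with $\phi(P)=P+P_0$.

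This omission is not cosmetic. The positive success probability you invoke comes from a lower bound on $\#\calC(k)$ for an auxiliary variety $\calC$ whose degree is polynomial in $q$; the Weil-type estimates~(\ref{eq:Weil32}) and~(\ref{eq:Weil43}) only dominate the error terms when $\#k$ is large compared to a fixed power of $q$---this is exactly where the hypothesis $[k:\F_q]\ge 80$ in Propositions~\ref{firsthalfdescentprop} and~\ref{secondhalfdescentprop}, and hence the hypothesis $m\ge 7$ here, enters. Over $\F_q$ itself there is no such margin, so sampling from $\PGL(\F_q)$ as you propose cannot yield a success probability bounded away from zero. In short, the two-step outline is correct, but without the base-change-and-norm mechanism the argument does not go through.
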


Applying repeatedly the algorithm of the above theorem we deduce the following result.
\begin{Corollary}\label{cor:descent}
There exists an algorithm, described in the proof, that takes as input an $(E/\F_q,P_0)$-presentation and a divisor $D \in \Div_{\F_q}(E)$ such that $\essdeg_{\F_q}D = 2^m$ for some integer $m$ and computes  a divisor $D' \in \Div_{\F_q}(E)$  such that 
\[
\log_\frakM D= \log_\frakM D' \,, \quad \essdeg_{\F_q} D' \mid \changee{2^8} \,,\quad \absdeg(D')\leq (2q)^{2m} \absdeg(D) \,. 
\]
This algorithm is probabilistic  and its expected runtime is polynomial in $q^m \absdeg(D)$.
\end{Corollary}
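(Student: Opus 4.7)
The corollary is a direct iteration of Theorem \ref{theo_descent}. I set $D_0 := D$ and, inductively, as long as $\essdeg_{\F_q}(D_i) = 2^{m_i}$ with $m_i \ge 7$, I apply Theorem \ref{theo_descent} to $D_i$ to obtain a divisor $D_{i+1} \in \Div_{\F_q}(E)$ satisfying $\log_\frakM D_i = \log_\frakM D_{i+1}$, $\essdeg_{\F_q}(D_{i+1}) \mid 2^{m_i - 1}$, and $\absdeg(D_{i+1}) \le 4q^2 \absdeg(D_i)$. I stop at the first index $k$ for which $\essdeg_{\F_q}(D_k) \mid 64$ and output $D' := D_k$.

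The loop is well-defined: at every step the essential degree $\essdeg_{\F_q}(D_{i+1})$ divides $2^{m_i-1}$, hence is again a power of $2$, say $2^{m_{i+1}}$ with $m_{i+1} \le m_i - 1$. Starting from $m_0 = m$ (the case $m \le 6$ being trivial, with $D' := D$), the sequence $(m_i)$ strictly decreases by at least $1$ per step, so after at most $k \le \max(0, m - 6) \le m$ iterations we reach $m_k \le 6$, i.e.\ $\essdeg_{\F_q}(D_k) \mid 64$. Transitivity of the equivalence relation in \eqref{eq_equiv_divisors} gives $\log_\frakM D = \log_\frakM D'$.

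For the absolute degree, iterating the multiplicative bound yields
\[
\absdeg(D') \le (4q^2)^k \absdeg(D) \le (4q^2)^m \absdeg(D) = (2q)^{2m} \absdeg(D),
\]
which is the bound required. For the running time, each call to Theorem \ref{theo_descent} runs in expected polynomial time in $q \absdeg(D_i) \le q (2q)^{2m} \absdeg(D)$; since there are at most $m$ calls, the total expected time is polynomial in $q (2q)^{2m} \absdeg(D)$. Because $(2q)^{2m} = 4^m q^{2m} \le q^{2m} \cdot q^{2m} = (q^m)^4$ whenever $q \ge 2$, this quantity is polynomial in $q^m \absdeg(D)$, as claimed.

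There is essentially no obstacle to overcome here — the content of the corollary lies entirely in Theorem \ref{theo_descent}; the only point requiring attention is the bookkeeping verification that the hypotheses of the theorem (essential degree a power of $2$, exponent $\ge 7$) are preserved through the iteration and that the geometric blowup of $\absdeg$ and of the running time is absorbed by the stated bounds.
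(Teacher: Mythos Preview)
Your proof is correct and follows exactly the approach indicated in the paper, which simply says ``Applying repeatedly the algorithm of the above theorem we deduce the following result'' without spelling out the bookkeeping. Your verification that the iteration terminates in at most $m$ steps, that the $\absdeg$ bound telescopes to $(2q)^{2m}$, and that the total expected running time remains polynomial in $q^m\absdeg(D)$ is precisely what is needed.
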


\changee{Notice that the definition of $\essdeg_{\F_q}D'$ ``excludes'' the trap points, hence the divisor $D'$ outputted in Corollary \ref{cor:descent} can contain trap points; the non-trap points in the support of $D'$ are contained in $E(\F_{q^{256}})$.}

The algorithm in \cite{ZKG} is based on the descent procedure \cite[Theorem 3]{ZKG}. Using the same ideas we use the descent procedure of the last corollary to describe our main algorithm, which computes discrete logarithms in finite fields with an elliptic presentation. 

The idea is setting up an index calculus with factor base the irreducible divisors whose essential degree divides $\changee{2^8}$. To collect relations we use a ``zig-zag descent'': for every $f = g^ah^b$, we first use the polynomial $\mu$ determined in Remark \ref{rem:mu} to find $f' \equiv f \bmod \frakM$ such that the essential degree of $\divi(f')$ is a power of $2$, and we then apply the descent procedure to express $\log(f)=\log(f')$ as the logarithm of sums of elements in the factor base. 

\subparagraph*{Main Algorithm}

Input: an $(E/\F_q,P_0)$-epresentation $\frakM \subset \F_q[x,y]$ of a field $K$ and two polynomials $g,h \in (\F_q[x,y] {-} \frakM)$ such that $g$ generates the group $\left( \F_q[x,y]/\frakM\right)^\times/\F_q^\times$.

Output: an integer $z$ such that 
\[
g^z \equiv \gamma \cdot h \pmod{\frakM} \quad \mbox{ for some }\gamma \in \F_q^\times \,,
\]
which is equivalent to $g^z=h$ in the group $K^\times/\F_q^\times$.

\begin{enumerate}
\item \emph{Preparation:} Compute the monic polynomial $\mu \in \F_q[x]$ generating the ideal $\frakM\cap \F_q[x]$. Compute polynomials $\tilde g, \tilde h \in \F_q[x]$ such that $\tilde g \equiv g$ and $\tilde h \equiv h$ modulo~$\frakM$. Put $c := \# E(\F_q)$, $n:= \deg \mu$ and $m := \lceil \log n \rceil +3  $.

\item \emph{Factor base:} List the irreducible divisors $D_1,\ldots, D_t\in \Div_{\F_q}(E)$ that do not contain $P_1$ and either have degree dividing $\changee{2^8}$ or are supported on the trap points.

\item \emph{Collecting relations}: For $j=1,\ldots, t{+}1$ do the following:

Pick random integers $\alpha_j,\beta_j \in \{1,\ldots, \tfrac{q^n-1}{q-1}\}$ and compute $\tilde{g}^{\alpha_j} \tilde h^{\beta_j}$. Pick random polynomials $f(x)$ of degree $2^m$ such that $f\equiv \tilde{g}^{\alpha_j} \tilde h^{\beta_j}  \pmod \mu$ until $f$ is irreducible. Apply the descent procedure in Corollary \ref{cor:descent} to find $v_j = (v_{j,1},\ldots,v_{j,t}) \in \Z^t$ such that 
\[
\log_{\frakM} \left( \divi(f) \right) = \log_\frakM \left(v_{j,1} D_1 + \ldots + v_{j,t} D_t \right) \,.
\]

\item \emph{Linear algebra}: Compute $d_1, \ldots, d_{t+1} \in \Z$ such that $\gcd(d_1, \ldots, d_{t+1})=1$ and 
\[ d_1v_1+\ldots +d_{t+1}v_{t+1} \equiv (0,\ldots,0) \pmod{  \tfrac{q^n-1}{q-1}c }\,.
\]
Put $a:= d_1 \alpha_1 + \ldots + d_{t+1}\alpha_{t+1} $ and $b := d_1 \beta_1 + \ldots + d_{t+1}\beta_{t+1}$.
\item \emph{Finished?}: If $b$ is not invertible modulo $\tfrac{q^n-1}{q-1}$ go back to step $3$, otherwise output
\[
z := -ab^{-1} \,\left(\bmod{\,\tfrac{q^n-1}{q-1}}\right)
\]
\end{enumerate}

\subparagraph*{Analysis of the main algorithm}
We first prove, assuming Theorem \ref{theo_descent}, that the algorithm, when it terminates, gives correct output. First of all we notice that, as explained in Remark \ref{rem:mu}, the polynomials $\mu, \tilde g$ and $\tilde h$ exist and that $\tilde g$ and $\tilde h$ define the same element as $g$, respectively $h$, in $K \cong \F_q[x,y]/\frakM$. 
Let $d_j, \alpha_j, \beta_j$ and $v_j$ be the integers and vectors of integers stored at the beginning of the fourth step the last time it is executed. By definition of $d_j$, we have 
\[
\sum_{j=1}^{t+1} \sum_{i=1}^t d_j v_{j,i} D_i =  \tfrac{q^n-1}{q-1} c \cdot 
D \,,
\]
for a certain $D \in \Div_{\F_q}(E)$.

\changee{For all $j$ the divisor $\sum_i v_{j,i}D_i$ has the same logarithm as a certain $\divi(f)$, hence the divisor  $\sum_i v_{j,i}D_i-\divi(f)$ is principal. In particular $\sum_i v_{j,i}D_i$ has degree $0$, hence also $D$ has degree $0$ and, since  $c= \# \mathrm{Pic}^0(E/\F_q)$, the divisor $cD$ is principal.}
%The divisor $c D$ is principal because $c= \# \mathrm{Pic}^0(E/\F_q)$ and, since for all $j$ the divisor $\sum_i v_{j,i}D_i$ is principal, $D$ has degree $0$. 
Choosing $\lambda$ in $\F_q(E)$ such that $\divi(\lambda) = cD$, we have 
\begin{eqn}\label{eq:lambda}
\sum_{j=1}^{t+1} \sum_{i=1}^t d_j v_{j,i} D_i = \divi(\lambda^{\frac{q^n-1}{q-1}})\,.
\end{eqn}
Writing $\log$ for $\log_{\frakM,g}$, by definition of $v_j$ we have
\begin{equation*}\label{eq:v_j}
\log( g^{\alpha_j} h^{\beta_j}) = \log \left(\sum_{i=1}^t v_{j,i} D_i \right) \,.
\end{equation*}
This, together with Equation (\ref{eq:lambda}), imply the following equalities in $\Z/\tfrac{q^n-1}{q-1}\Z$ 
\[
\begin{aligned}
a+ b\log(h) &= \sum_{j=1}^{t+1} d_j(\alpha_j + \beta_j \log(h)) = \sum_{j=1}^{t+1} d_j \log( g^{\alpha_j} h^{\beta_j}) % =  \sum_{j=1}^{t+1} d_j \log(\divi(g^{\alpha_j} h^{\beta_j})) = \\
= \sum_{j=1}^{t+1} d_j \log \big(\sum_{i=1}^t v_{j,i} D_i \big) \\
& = \log \big( \sum_{j=1}^{t+1} \sum_{i=1}^t d_j v_{j,i} D_i \big) = \log\big(\divi(\lambda^{\frac{q^n-1}{q-1}})\big) =\tfrac{q^n-1}{q-1} \log(\lambda) =  0 \,,
\end{aligned}
\]
implying that the output $z$ of the algorithm is correct.

We now estimate the running time step by step. 
The first step can be performed with easy Groebner basis computations. Now the second step. We represent irreducible divisors $D$ not supported on $O_E$ in the following way: either $D$ is the vanishing locus of a prime ideal $(a(x),W(x,y))$ with $a$ monic and irreducible and $W$ the Weierstrass polynomial defining $E$, or $D$ is the vanishing locus of a prime ideal $(a(x),y-b(x))$ for some polynomials $a,b \in \F_q[x]$ and $a$ monic irreducible; in the first case $\deg  D = 2\deg a$, in the second case $\deg D = \deg a$. 
We can list all the irreducible divisors with degree dividing $\changee{2^8}$ by listing all monic irreducible polynomials $\mu_1, \ldots, \mu_r \in \F_q[x]$ of degree dividing $\changee{2^8}$ and, for each $i$ compute the prime ideals containing $(\mu_i, W)$, which amounts to factoring $W$ as a polynomial in $y$, considered over the field $\F_q[x]/\mu_i$. 
Listing all the divisors supported on the trap points can be done case by case. For example we can list the irreducible divisors supported on the set $S:= \{P \in E(\ol{\F_q}) : \phi^4(P)-P = 4P_0\}$ by writing down, with the addition formula on $E$, an ideal $J \subset \F_q[x,y]$ whose vanishing locus is $S \subset \A^2(\ol \F_q)$ and computing all the prime ideals containing $J$. The divisor $O_E$ appears among $D_1,\ldots, D_s$ because $O_E$ is a trap point.  
Since there are $q^{256}$ monic polynomials of degree $\changee{2^8}$ and at most $15q^4$ trap points and since, using \cite{Ber}, factoring a polynomial of degree $d$ in $\F_q[x]$ takes on average $O(\log(q)d^3)$ operations, the second step takes polynomial time in $q$. 
Moreover, we have $t \le 2q^{256}$.

Now the third step. By \cite[Theorem 5.1]{Dirichlet}, if $f(x)$ is a random polynomial of degree $2^m$ congruent to $\tilde g^{\alpha_j} \tilde h^{\beta_j}$ modulo~$\mu$, then the probability of $f$ being irreducible is at least $2^{-m-1}$. Therefore finding a good $f$ requires on average $O(2^m) = O(n)$ primality tests, hence $O(n^4\log q)$ operations. By assumption finding the vector $v_j$ requires polynomial time in $q^m 2^{m+1}$. We deduce that the third step has probabilistic complexity $t q^{O(\log n)} = q^{O(\log n)}$. 

The fourth step can be can be performed by computing a Hermite normal form of the matrix having the $v_j$'s as columns. Since $c\leq q{+}2\sqrt{q}+1$, the entries of the $v_j$ are at most as big as $4q^{n+1}$. Therefore the fourth step is polynomial in $t\log(q^n)$, hence polynomial in $n$.

The last step only requires arithmetic modulo $(q^n{-}1)/(q{-}1)$.

To understand how many times each step is repeated on average, we need to estimate the probability that, in the last step, $b$ is invertible modulo $(q^n{-}1)/(q{-}1)$ and to do so we look at the quantities in the algorithms as if they were random variables. The vector $(d_1,\ldots, d_{t+1})$ only depends on the elements $h^{\alpha_j}g^{\beta_j}$'s and on the randomness contained in the descent procedure and in step $2$. Since the $\alpha_j$'s and $\beta_j$'s are independent variables and since $g$ is a generator, we deduce that the vector $(\beta_1 , \ldots, \beta_{t+1})$ is independent of $(g^{\alpha_1}h^{\beta_1}, \ldots, g^{\alpha_{t+1}}h^{\beta_{t+1}})$, hence also independent of the vector $(d_1, \ldots, d_{t+1})$. Since   $(\beta_1 , \ldots, \beta_{t+1})$ takes on all values in $\{0,\ldots, q^n-1 \}^{t+1}$ with the same probability and $\mathrm{gcd}(d_1,\ldots, d_{t+1})=1$, then  
\[
b= d_1 \beta_1 + \ldots d_{t+1}\beta_{t+1} 
\]
takes all values in $\Z/ (q^n-1)\Z$  with the same probability. Hence
\[
\Big(\text{probability that $b$ is coprime to }\tfrac{q^n-1}{q-1} \Big) = \phi\left(\tfrac{q^n-1}{q-1}\right)/\tfrac{q^n-1}{q-1}  \gg \frac{1}{\log \log q^n}
\] 
When running the algorithm, the first and the second step get executed once and the other steps get executed the same number of times, say $r$, whose expected value is the inverse of the above probability. Since $r$ is  $O(\log \log (q^n))$ on average and each step has average complexity at most $q^{O(\log n)}$, the average complexity of the algorithm is $O(q^{O(\log n)})$.
Hence, assuming Theorem \ref{theo_descent} we have proved the following theorem.
\begin{Theorem}\label{maintheorem3}
The above Main Algorithm solves the discrete logarithm problem in the group $K^\times/\F_q^\times$ for all finite fields $K$ having an elliptic presentation $\frakM \subset \F_q[x,y]$. It runs in expected time $ q^{O(\log[K:\F_q])}$.
\end{Theorem}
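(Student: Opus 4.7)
The proof has two ingredients: correctness of the returned value $z$, and a bound on the expected running time. I will handle them in that order.

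For correctness, suppose the algorithm reaches step $5$ with $b$ invertible modulo $N:=(q^n-1)/(q-1)$. Let $\alpha_j,\beta_j,v_j,d_j$ be the values from the last execution. By the defining property of the $d_j$, we have
\[
\sum_{j=1}^{t+1}\sum_{i=1}^{t} d_j v_{j,i}\, D_i \;=\; N\,c\, D
\]
for some $D\in \Div_{\F_q}(E)$. Each $\sum_i v_{j,i}D_i$ is principal by the conclusion of Corollary \ref{cor:descent}, so it has degree zero, and since $c=\#\mathrm{Pic}^0(E/\F_q)$ it follows that $c D$ is principal as well, say $cD=\divi(\lambda)$. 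Thus the left-hand side equals $\divi(\lambda^{N})$, and because the factor base is chosen in step $2$ to avoid $P_1$, neither $\lambda$ nor the $D_i$'s have $P_1$ in their support. Applying $\log_{\frakM,g}$ to both sides and substituting the relations supplied by step $3$, the identity rearranges to $a+b\log h\equiv 0\pmod{N}$, so that $z=-ab^{-1}$ is the correct discrete logarithm.

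For the running time I would bound each step. Steps $1$, $4$ and $5$ are classical: a Groebner basis reduction yields $\mu$; step $4$ is a Hermite normal form on a $(t+1)\times t$ matrix with entries of size $O(q^{n+1})$; step $5$ is arithmetic modulo $N$; all three are polynomial in $n$ and $\log q$. Step $2$ enumerates the at most $O(q^{64})$ monic irreducible polynomials in $\F_q[x]$ of degree dividing $64$, factors the Weierstrass polynomial over each residue ring via Berlekamp, and lists the at most $15 q^4$ trap points through explicit ideals; this takes polynomial time in $q$ and yields $t=O(q^{64})$. In step $3$, the density bound of \cite[Theorem 5.1]{Dirichlet} says that a random polynomial $f$ of degree $2^m$ in the residue class of $\tilde g^{\alpha_j}\tilde h^{\beta_j}$ modulo $\mu$ is irreducible with probability at least $2^{-m-1}$, so $O(n)$ trials suffice. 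Applying Corollary \ref{cor:descent} to $\divi(f)$, whose essential degree is $2^m$ and absolute degree $O(2^m)$, costs polynomial in $q^m\cdot 2^{m+1}=q^{O(\log n)}$; multiplying by $t+1$ gives $q^{O(\log n)}$ for a full pass through step $3$.

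The remaining point, which I expect to be the main obstacle, is bounding the expected number of times the loop (steps $3$--$5$) has to be executed; this amounts to a lower bound on the probability that $b=\sum_j d_j\beta_j$ is invertible modulo $N$. I would view $\alpha_j,\beta_j$ as independent uniform random variables on $\{1,\ldots,N\}$ and argue that the integers $d_j$ depend on the $\alpha_j,\beta_j$ only through the products $g^{\alpha_j}h^{\beta_j}$ and the internal randomness of the descent procedure. Because $g$ generates $K^\times/\F_q^\times$, these products determine only the quantities $\alpha_j+\beta_j\log h\pmod N$, so the conditional distribution of $(\beta_1,\ldots,\beta_{t+1})$ given $(d_1,\ldots,d_{t+1})$ remains uniform, and the condition $\gcd(d_1,\ldots,d_{t+1})=1$ then forces $b$ to be uniformly distributed modulo $N$. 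The standard lower bound $\phi(N)/N\gg 1/\log\log N$ gives probability $\gg 1/\log\log(q^n)$ that $b$ is invertible, so the expected number of iterations is $O(\log\log(q^n))$ and the total expected running time is $q^{O(\log[K:\F_q])}$. The delicate point is making the conditional independence argument rigorous; everything else reduces to bookkeeping.
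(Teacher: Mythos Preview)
Your proposal is correct and follows essentially the same approach as the paper: the correctness argument via $\divi(\lambda^{N})$, the step-by-step complexity bounds, and the independence argument showing that $(\beta_1,\ldots,\beta_{t+1})$ is conditionally uniform given the $d_j$'s (because the $d_j$ depend only on the products $g^{\alpha_j}h^{\beta_j}$ and $g$ is a generator) all mirror the paper's analysis. The point you flag as delicate---the conditional independence of the $\beta_j$'s from the $d_j$'s---is exactly what the paper handles in the same way, and your phrasing in terms of the products determining only $\alpha_j+\beta_j\log h\pmod N$ is if anything slightly cleaner than the paper's.
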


Theorem \ref{maintheorem1} follows from Theorem \ref{maintheorem3} and Proposition  \ref{propfieldswithpresentation}: the latter states that any finite field of small characteristic $K$ can be embedded in a slightly larger field $K'$ having an elliptic presentation $\frakM \subset \F_q[x,y]$ such that $ q \leq \log (\# K')^4$ and Theorem \changee{\ref{maintheorem3}} implies that the discrete logarithm problem is at most quasi-polynomial for such a $K'$. Moreover, by Proposition \ref{propfieldswithpresentation}, such a $K'$, together with its elliptic presentation, can be found in polynomial time in $\log(\# K)$, by \cite{LenIso} we can compute an embedding $K \hookrightarrow K'$ in polynomial time in $\log(\# K)$ and by \cite[Theorem $15$]{RosSch} a random element $g' \in K'$ has probability $\phi(\# K')/\# K' \gg 1/ \log \log \# K'$ of being a generator of $K'$: hence, given elements $g,h \in K$, we can compute $\log_g(h)$ by embedding $K$ inside $K'$ and trying to compute the pair $(\log_{g'}g, \log_{g'}h)$ for different random values of $g' \in K'$.

%Proposition \ref{propfieldswithpresentation} is proven, while Theorem \ref{maintheorem3} relies on the the existence of a descent procedure as described in Theorem \ref{theo_descent}. In the rest of the article, we describe this descent procedure.

Proposition \ref{propfieldswithpresentation} is proven, while the proof of Theorem \ref{maintheorem3} relies on the the existence of a descent procedure as described in Theorem \ref{theo_descent}: in the rest of the article, we describe this descent procedure.

\section{The descent procedure}\label{sec:idea_descent}
In this chapter we describe the descent procedure of Theorem \ref{theo_descent}: \changee{we first make} a couple of reductions, then we split the descent in two steps and, in both steps, we reduce our problem to the computation of $k$-rational points on certain varieties for a certain extension $k$ of $\F_q$. In Sections \ref{sec:lemma}, \ref{sec:3-2}, \ref{sec:43} we give the exact definition of these varieties and we prove that they have many $k$-rational points, which implies that our algorithm has a complexity as in Theorem \ref{theo_descent}.

Let $D$ be as in Theorem \ref{theo_descent}. \changee{We can write $D$ as a combination of divisors $D_i$ that are irreducible over~$\F_q$, apply the descent to the $D_i$'s and take a linear combination of the results to reconstruct a possible $D'$. 
Therefore, we can suppose that $D$ is irreducible over~$\F_q$.}

\changee{The set of trap points is Galois-stable, hence an irreducible divisor is either supported on this set or on the complement}.
If $D$ is supported on the set of trap points, in the algorithm we just pick $D'=D$. Hence we focus on the case where $D$ is irreducible and supported outside the trap points. 
In other words, denoting $2^m = 4l$, we can write 
\[
D = \p + \sigma \p + \ldots + \sigma^{4l-1}\p\,,
\]
for $\p$ a non-trap point on $E$ such that $[\F_q(\p ):\F_q] = 4l =2^{m}$ and $\sigma$ a generator of $\Gal(\F_q(\p)/\F_q)$. 

We can make a sort of ``base change to $k$''.
Let $k$ be the unique subfield of $\F_q(\p)$ such that $[k:\F_q]=l$ and let us define
\[
\tilde D := \p + \sigma^l\p + \sigma^{2l}\p + \sigma^{3l}\p \quad \in \Div_k(E)\,.
\] 
If we are able to find a divisor $\tilde D' \in \Div_k(E)$ and a rational function $g \in k(E)$ such that 
\begin{eqn}\label{eq:g=1_onGaloisconjugates}
\begin{aligned}
& \qquad \absdeg\tilde D' \leq 16 q^2\,, \qquad \qquad \essdeg_k \tilde D' \mid 2 \\
	& \divi(g) = \tilde D - \tilde D'\,, \qquad \qquad g(\tau(P_1)) = 1 \quad \mbox{for all }\tau \in \Gal(\ol{\F_q}/\F_q)\,, 
\end{aligned}
\end{eqn}
then the divisor
\[
D' := \tilde D' + \sigma (\tilde D') + \ldots \sigma^{l-1}(\tilde D') \,,
\]
satisfies the conditions in Theorem \ref{theo_descent}: 
the absolute and essential degree of $D'$ are easy to estimate and we have $\log_\frakM D = \log_\frakM D'$ because the rational function $f :=g g^{\sigma}\cdots g^{\sigma^{l-1}}$ satisfies $f(P_1)=1$ and $\divi(f) = D - D'$.

Hence, in order to prove Theorem \ref{theo_descent}, it is enough to describe a probabilistic algorithm that takes $k$ and $\tilde D$ as input and, in expected polynomial time in $ql$, computes $g, \tilde D'$ as in \ref{eq:g=1_onGaloisconjugates}.
Such an algorithm can be obtained by applying in sequence the algorithms given by the following two propositions. In other words, the descent procedure is split in two steps. 

\begin{Proposition}\label{firsthalfdescentprop}
There is an algorithm, described in the proof, with the following properties
\begin{itemize}
	\item it takes as input an $(E/\F_q,P_0)$-presentation, a finite extension $\F_q \subset k$ of degree  at least $80$ and a divisor $D \in \Div_k(E)$ such that $\essdeg_{k}D=4$
	\item it computes a rational function $g\in k(E)$ and a divisor $D' = D_1 + D_2$ in $\Div_k(E)$ such that 
	\[\essdeg_{k} (D_1) {\mid} 3, \,\,\, \essdeg_{k} (D_2) {\mid} 2 ,\,\,\,   \absdeg D_1 + \absdeg D_2 \leq 2q \,\absdeg D, \]
	and 
	\begin{equation*}
	\begin{aligned}
 & D - D' = \divi(g) \text{ for a function $g$ such that }\\ 
 &\quad g(P) = 1  \mbox{ for all }P \in E(\ol{\F_q}) \mbox{ satisfying }\phi(P)=P+P_0;
	\end{aligned}
	\end{equation*}
	\item  it is probabilistic with expected runtime polynomial in $q\log(\# k)\absdeg(D)$.
\end{itemize}
\end{Proposition}

\begin{Proposition}\label{secondhalfdescentprop}
There is an algorithm, described in the proof, with the following properties
\begin{itemize}
	\item it  takes as input an $(E/\F_q,P_0)$-presentation, an extension of finite fields $\F_q \subset k$ of degree at least $80$ and a divisor $D \in \Div_k(E)$ such that $\essdeg_{k}D=3$;
	\item it computes a rational function $g\in k(E)$ and a divisor $D' \in \Div_k(E)$ such that 
	\[
	\essdeg_{k} (D') \mid 2 \,, \quad \absdeg (D') \leq 2q \, \absdeg (D)  ,
	\]
	and 
	\[
	\begin{aligned}
 & D - D' = \divi(g) \text{ for a function $g$ such that }\\ 
	&\quad g(P) = 1  \mbox{ for all }P \in E(\ol{\F_q}) \mbox{ satisfying }\phi(P)=P+P_0;
	\end{aligned}\]
	\item it is probabilistic with expected runtime polynomial in $q\log(\# k)\absdeg(D)$. 
\end{itemize} 
\end{Proposition}

We now describe our strategy of proof for the above propositions. Let $\paramdeg := \essdeg_k(D)$ (hence $\paramdeg=3$ for Proposition \ref{secondhalfdescentprop} and $\paramdeg=4$ for Proposition \ref{firsthalfdescentprop}). 
Again, $D$ can be supposed to be irreducible over~$k$ and supported outside the traps, i.e.
\[
D= \p + \ldots + \sigma^{\paramdeg -1}\p\,,
\]
for $\p$ a non-trap point on $E$ such that $[k(\p):k]= \paramdeg$, and $\sigma$ a generator of $\Gal(k(\p)/k)$. 

Let $\tau_{P_0}$ be the translation by $P_0$ on $E$ and let  $h \mapsto h^\phi $ be the automorphism of $k(E)$ that ``applies $\phi$ to the coefficients of $h$'' (denoting $x,y$ be the usual coordinates on $E$, we have $x^\phi =x$, $y^\phi=y $ and $\alpha^\phi=\alpha^q $ for all $\alpha \in k$). Using this notation, we rephrase one of Joux's ideas (\cite{Joux}) in the following way: for every point $P \in E(\ol{\F_q})$ such that $\phi(P)=P + P_0$ and for every function $f \in k(E)$ regular on $P$ we have
\begin{eqn}\label{eq:use_P1}
f(P)^q = f^\phi(\phi(P)) = f^\phi(P+P_0) = (f^\phi \circ \tau_{P_0})(P) \,,
\end{eqn}
hence, for all $a,b,c,d \in k$ such that $cf^{q+1}{+}  df^q {+} af {+} b$ does not vanish on $P$, the rational function
\begin{eqn}\label{eq:def_g_from_f}
g:=\frac{( cf + d)(f^{\phi}\circ \tau_{P_0}) + af + b}{  cf^{q+1}+  df^q + af + b} \,,
\end{eqn}
satisfies 
\begin{eqn}\label{eq:samelog}
\text{$g(P) = 1 \quad $ for all $P\in E(\ol{\F_q})$ such that $\phi(P)=P + P_0$.}
\end{eqn}
Hence, for a function $g$ as in (\ref{eq:def_g_from_f}), one of the requirements of Propositions \ref{firsthalfdescentprop} (respectively \ref{secondhalfdescentprop}) is automatically satisfied. In the algorithm we look for a function $g$ of that form. 
We now look for conditions on $f$ and $a,b,c,d$ implying that the function $g$ and the divisor 
\begin{eqn} \label{def:D'}
D':= D - \divi(g) \,,
\end{eqn}
have the desired properties. 

One of these properties is ``$\essdeg_k(D')\leq \paramdeg-1$'', for which it is enough that ``$[k(P):k]\le \paramdeg -1$ for all the points $P$ in the support of \changee{$D'$}. 
In particular the support of $D'$ contains all the poles of $g$, which are either poles of $f$, poles of $f^\phi \circ \tau_{P_0}$ or zeroes of $cf^{q+1}{+}  df^q {+} af {+} b$.  Since the zeroes and poles of a function $h\in k(E)$ are defined over an extension of $k$ of degree at most $\deg(h)$, the following conditions are sufficient to take care of the poles of $g$:
\begin{enumerate}[label=(\Roman*), itemsep=0pt, start=1]
\item \label{item1:degree} the function $f$ has at most $\paramdeg{-}1$ poles counted with multiplicity;
\item \label{item2:split} the polynomial $cT^{q+1} + dT^q + aT + b$ splits into linear factors in $k[T]$.
\end{enumerate}

Another requirement is for $\p$ and all its conjugates to be zeroes of $g$.  Assuming \ref{item1:degree} and \ref{item2:split}, this is equivalent to  
\begin{enumerate}[label=(\Roman*), start=3]
\item\label{item3:gamma} $ \smt abcd \cdot f(\sigma^i\p) = -f^\phi(\sigma^i\p + P_0)\quad $ for $i=0,1,\ldots, \paramdeg{-}1$,
\end{enumerate}
where $(\smt abcd, x) \mapsto \smt abcd \cdot x = \tfrac{ax+b}{cx+d}$ is the usual action of $\PGL$ on $\P^1$. Notice that the definition of $g$ only depends on the class of $\smt abcd$ in $\PGL(k)$. 

Conditions \ref{item1:degree}, \ref{item2:split} and \ref{item3:gamma} together imply that $\essdeg(D')\leq \paramdeg{-}1$: a point $P$ in the support of $D'$ is either a pole of $f$, a pole of $f^\phi \circ \tau_{P_0}$, a zero of $cf^{q+1}{+}  df^q {+} af {+} b$ or a zero of the numerator of (\ref{eq:def_g_from_f}); the only case left to treat is the last, where we have $[k(P):k]\leq \epsilon-2$ because  the divisor of zeroes of the numerator of (\ref{eq:def_g_from_f}), which has degree at most $2(\paramdeg{-}1)$, is larger than the sum of the conjugates of $P$ summed with the conjugates of $\p$. \changee{Notice that we are using another idea from \cite{Joux}, namely that, if $f$ has low degree (i.e. few poles), then the numerator of (\ref{eq:def_g_from_f}) has low degree too; the denominator has probability about $1/q^3$ of splitting into linear polynomials in $f$.}

Condition \ref{item1:degree} easily implies that $\absdeg(D')$ is at most $2q\epsilon$. 

Finally, as noticed when defining $g$, we want
\begin{enumerate}[label=(\Roman*), start=4]
\item\label{item4:traps} for every point $P$ on $E$ such that $\phi(P)=P+P_0$, the function $f$ is regular on $P$ and  $cf^{q+1}{+}df^q{+}af{+}b$ does not vanish on $P$. 
\end{enumerate}

We showed that if $(f, \smt abcd )$ satisfies the conditions \ref{item1:degree}, \ref{item2:split}, \ref{item3:gamma}, \ref{item4:traps}, then the formulas (\ref{eq:def_g_from_f}) and (\ref{def:D'}) give $g$ and $D'$ that satisfy the requirements of Proposition \ref{secondhalfdescentprop}, respectively Proposition \ref{firsthalfdescentprop}. In Section \ref{sec:3-2}, respectively Section \ref{sec:43}, we prove that there are many such pairs $(f, \smt abcd )$ and we give a procedure to find them when $\paramdeg=3$, respectively $\paramdeg=4$. In both cases we proceed as follows: 
\begin{itemize}
\item\label{fase1f} We choose a family of functions $f$ satisfying \ref{item1:degree} and we parametrize them with $k$-points on a variety $\mathcal F$.
\item\label{fase2C} We impose some conditions slightly stronger than \ref{item2:split}, \ref{item3:gamma}, \ref{item4:traps},  describing a variety $\calC \subset \mathcal F{\times}\PGL {\times }\A^1$: for all point $(f,\smt abcd,z) \in \calC(k)$, the pair $(f,\smt abcd)$ satisfies \ref{item1:degree}, \ref{item2:split}, \ref{item3:gamma}, \ref{item4:traps}. 
In particular, $\calC$ is a curve in the case $\epsilon=3$, a surface in the case $\epsilon=4$

\item We prove that the geometrically irreducible components of $\calC$ are defined over~$k$ and we deduce that $\calC(k)$ has cardinality at least $\tfrac{1}{2} (\# k)^{\dim \calC}$; this is the point in the proof where we use the technical hypothesis $[k:\F_q]\ge 80$ (details after Equations (\ref{eq:BettiD32}), (\ref{eq:BettiD43})).
\end{itemize}

Using $\calC$ we can describe the algorithm of  Proposition \ref{firsthalfdescentprop} (respectively Proposition \ref{secondhalfdescentprop}) when $D$ is an irreducible divisor defined over~$k$: one first computes equations for $\calC$ (which we describe explicitly), then looks for a point $(f,\smt abcd, z)$ in $\calC(k)$ and finally computes $g$ and $D'$ using the formulas (\ref{eq:def_g_from_f}) and (\ref{def:D'}). \begin{change}
In both cases, the variety $\calC$ can be defined using less than $9$ variables and $6$ polynomials of degree less than $q^3$ and a few non-equqlities. In particular one can look for points on $\calC$ by first choosing the values of certain variables at random and then trying to solve successively for the other variables one by one. We give more detail on this in Subs ections \ref{subsec:points_32} and \ref{subsec:points_43}, where we also prove that this procedure can be done probabilistically in polynomial time in $q\log(\# k)$.
\end{change}

\begin{Remark}\label{rem:traps}
If $\p \notin \Gal(\ol{\F_q}/\F_q) \cdot P_1$ is a point such that $\phi(\p)=\p+P_0$, then Equation \ref{eq:use_P1} implies that conditions \ref{item3:gamma} and \ref{item4:traps} exclude each other. This explains why such points $\p$ create problems to our strategy and need to be marked as \emph{traps}.
\end{Remark}

\section{Intermezzo: a lemma in Galois theory}\label{sec:lemma}
In this section we take a break from our main topic and we prove Lemma \ref{lem:good_curves}, which we use to prove that the varieties $\calC$ used in the algorithms of Propositions \ref{firsthalfdescentprop} and \ref{secondhalfdescentprop} have geometrically irreducible components defined over~$k$. 
Our method for that is to look at the field of constants: 
for any extension of fields $k \subset \K$, its \emph{field of constants} is the subfield of $\K$ containing all the elements that are algebraic over~$k$. In particular, when $k$ is perfect, an irreducible variety $\calC/k$ is geometrically irreducible iff $k$ is equal to the field of constants of the extension $k\subset k(\calC)$.

In the next proposition we study the splitting field of polynomials of the form $cT^{q+1} {+} dT^q {+} aT {+} b$ (as in condition \ref{item2:split}) over fields with a valuation, so that we can apply it to function fields.

\begin{Proposition} \label{prop:pure_fields}
Let $\F_q \subset k$ be an extension of finite fields and let $k \subset \K$ be a field extension with field of constants $k$. Let $v:\K^\times \to \Z$ be a valuation 
with ring of integral elements $\calO_v \subset \K$ and generator $\pi_v$ of the maximal ideal of $\calO_v$. 
Let $a,b,c,d$ be elements of $\calO_v$ such that 
\begin{subeqn}\label{eq:congruence_derivative}
	\begin{aligned}
& v(ad-bc) =1, \qquad v(d^qc - ac^q)=0 \qquad \text{and} \quad \\
&	\changee{ \exists \gamma \in  \K \text{ such that }   \gamma^q \equiv \tfrac ac  \pmod{\pi_v}   \quad  \text{ and } }
\\ &  \forall \lambda \in \calO_v^\times \quad  c\lambda^q - c^q(ad-bc)\lambda^{-1}  \not\equiv  d^qc - ac^q  \pmod{\pi_v^2} \,.
% \,\,\, v\left( F(r) \right)  = 1 \,,
\end{aligned}\end{subeqn}
\changee{Let $\K(F)$ be} the splitting field, over $\K$, of the polynomial
\[
F(T) := cT^{q+1} + dT^q + aT + b   \quad \in \K[T]\,.
\]
\changee{Then the extension $k\subset \K(F)$ has field of constants equal to $k$}.
\end{Proposition}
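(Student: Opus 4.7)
The plan is to prove that the splitting field $L$ of $F$ over $\K$ is totally ramified at $v$ with ramification index equal to $[L:\K] = q(q-1)$. Once this is established, the conclusion follows quickly: if $\tilde k \subset L$ denotes the algebraic closure of $k$ in $L$, then $\tilde k\K/\K$ is a constant-field extension (hence separable and unramified at every place) and simultaneously a subextension of $L/\K$, which is totally ramified at $v$. Both properties together force $[\tilde k\K : \K] = 1$, and combined with the hypothesis $\tilde k \cap \K = k$ this gives $\tilde k = k$.

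First I would reduce $F$ modulo $\pi_v$. The condition $v(ad-bc)=1$ gives $\bar a\bar d = \bar b\bar c$ in the residue field $\kappa_v$, and $v(d^qc - ac^q)=0$ forces $c \in \calO_v^\times$; together these let $\bar F$ factor as $\bar c^{-1}(\bar cT+\bar d)(\bar cT^q+\bar a)$, and the same inequality $v(d^qc-ac^q)=0$ is precisely what makes the two factors coprime. Hensel's lemma thus lifts this to $F = c\tilde F_1\tilde F_2$ in $\calO_v[T]$ with $\tilde F_1$ linear monic and $\tilde F_2$ monic of degree $q$, so the splitting field of $F$ coincides with that of $\tilde F_2$.

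The technical heart of the argument is to show that $\tilde F_2$ becomes Eisenstein at $v$ after an appropriate shift, using condition (iii). The cleanest route is the substitution $U := cT+d$, which turns $c^qF(T)$ into $\tilde G(U) := U^{q+1}+eU+f$, with $e := c^{q-1}a-d^q$ a unit and $f := -c^{q-1}(ad-bc)$ of valuation $1$. For any lift $\beta_0\in\calO_v$ of a root $\beta$ of $\bar cT^q+\bar a$ in $\kappa_v$, setting $U_\beta := c\beta_0+d$ and writing $U_\beta^q+e = \pi_v\eta$, $f = \pi_v f_0$, one computes
\[
\tilde G(U_\beta) \;=\; U_\beta(U_\beta^q+e) + f \;=\; \pi_v(U_\beta\eta + f_0),
\]
so $v(\tilde G(U_\beta)) = 1$ precisely when $U_\beta\eta + f_0 \in \calO_v^\times$; a direct rewriting identifies this last condition with condition (iii) applied to $\lambda = U_\beta$. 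Pulling back through $c^qF(\beta_0) = \tilde G(U_\beta)$ (and using $v(c) = v(\tilde F_1(\beta_0)) = 0$) gives $v(\tilde F_2(\beta_0)) = 1$, so $\tilde F_2(T+\beta_0)$ is Eisenstein; in particular $\K_2 := \K[T]/\tilde F_2$ is a wildly and totally ramified extension of $\K$ of degree $q$ at $v$.

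Finally, I describe $L/\K_2$ using the identity $(U_i - U_j)^{q-1}U_iU_j = f$ valid for any two distinct roots of $\tilde G$ (obtained by subtracting $\tilde G(U_i) = \tilde G(U_j) = 0$ and factoring). Fixing the root $S_1 := cR_1+d \in \K_2$ and writing $W_i := U_i - S_1$ for the remaining $q$ roots $U_i$ of $\tilde G$, the identity rearranges to $(1/W_i - 1/W_j)^{q-1} = S_1^2/f$; hence all differences $1/W_i - 1/W_j$ are $\F_q^\times$-multiples of a fixed $\theta$ with $\theta^{q-1} = S_1^2/f$. The $q$ distinct elements $1/W_i$ therefore lie in, and by cardinality exhaust, the $\F_q$-coset $1/W_0 + \F_q\theta$; since $W_0 = U_0 - S_1 \in \K_2$, this forces $L = \K_2(\theta)$. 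Because $S_1^2/f$ has (normalized) valuation $-q$ on $\K_2$, coprime to $q-1$, the extension $\K_2(\theta)/\K_2$ is a tamely totally ramified Kummer extension of degree $q-1$. Composing yields $L/\K$ totally ramified at $v$ of degree $q(q-1) = [L:\K]$, as needed.
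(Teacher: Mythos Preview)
Your ramification computation is essentially correct and is a pleasant alternative to the paper's two Newton–polygon passes: after completing at $v$, Hensel splits off the simple root, the shift by $\beta_0$ makes the degree-$q$ factor Eisenstein (your rewriting $c\lambda^{-1}\tilde G(\lambda)=c\lambda^q-c^q(ad-bc)\lambda^{-1}-(d^qc-ac^q)$ is exactly how condition~(iii) enters), and the identity $(U_i-U_j)^{q-1}U_iU_j=f$ together with the presence of the Hensel root $U_0$ in the base neatly exhibits the top layer as a Kummer extension of degree $q-1$. So you do prove that the splitting field of $F$ over $\widehat{\K}_v$ is totally ramified of degree $q(q-1)$.

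The gap is the passage from this local statement to the global claim $[L:\K]=q(q-1)$. Your factorization $F=c\tilde F_1\tilde F_2$ and your field $\K_2=\K[T]/\tilde F_2$ live over $\widehat{\K}_v$, not over $\K$; what you have actually computed is $|D_w|=|I_w|=q(q-1)$ for any $w\mid v$ in $L$. This gives $[L:\K]=g\cdot q(q-1)$ with $g$ the number of places over $v$, and nothing in your argument forces $g=1$. Your first-paragraph deduction then does not close: one gets $I_w\subset\Gal(L/\tilde k\K)$, hence $[L:\tilde k\K]\ge q(q-1)$, but without $[L:\K]=q(q-1)$ this does not yield $[\tilde k:k]=1$. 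The paper supplies precisely the missing global input: Bluher's theorem embeds $\Gal(L/\K)$ into $G=\PGL_2(\F_q)$, the local computation identifies $D_w$ with a Borel $B$, and since $B$ is maximal and self-normalizing in $G$ one concludes $\Gal(L/\K)=\Gal(\K'(F)/\K')$ directly. Some input of this kind (either the $\PGL_2$-structure or an independent global bound $[L:\K]\le q(q-1)$) is needed to finish your argument.

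Two smaller points. First, Hensel's lemma over $\calO_v$ requires $\calO_v$ to be Henselian, which is not assumed; you should pass to $\widehat{\calO}_v$ explicitly. Second, the shift $\beta_0$ requires $-\bar a/\bar c$ to be a $q$-th power in the residue field $\kappa_v$, which need not be perfect; the paper's proof has the same implicit assumption, but in the applications (via the explicit matrix of Proposition~\ref{prop:we_can_use_other_prop}) one has $\bar a/\bar c=\bar w_3^{\,q}$, so the root exists.
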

\begin{proof}
For any field extension $\K \subset \widetilde{\K}$, we denote $\widetilde{\K}(F)$ the splitting field of $F$ over~$\widetilde{\K}$, which is a separable extension of $\widetilde{\K}$ because the discriminant of $F$ is a power of $ad{-}bc$, which is non-zero.
Since the field of constants of $k \subset \K$ is equal to $k$, then $\K' :=\K\otimes_k \ol k$ is a field and the
statement of the proposition is equivalent to the equality 
\[
\Gal(\K(F)/\K) = \Gal(\K'(F)/\K') \,.
\]
By \cite[Theorems $2.5$ and $3.2$]{HidePGL2} there exists a bijection $\{\text{roots of }F\} \leftrightarrow \P^1(\F_q)$ that identifies the action of $\Gal(\K(F)/\K)$ on the roots with the action of a subgroup of $G := \PGLq$ on $\P^1(\F_q)$. We choose such a bijection and we identify $\Gal(\K(F)/\K)$ and $\Gal(\K'(F)/\K')$ with two subgroups of $G$. 
If we prove that $\Gal(\K'(F)/\K')$ contains a Borel subgroup $B$ of $G$ the proposition follows: the only subgroups of $\PGL$ containing $B$ are the whole $G$ and $B$ itself and, since $B$ is not normal inside $G$, we deduce that either $\Gal(\K(F)/\K)=\Gal(\K(F)/\K')=B$ or $\Gal(\K(F)/\K) =\Gal(\K'(F)/\K')=G$.

In the rest of the proof we show that $\Gal(\K'(F)/\K')$ contains a Borel subgroup working locally at $v$. We choose an extension of $v$ to $\K'$ and consider the completion $\K'_v$ of $\K'$. Since $\Gal(\K'_v(F)/\K'_v)$ is a subgroup of $\Gal(\K'(F)/\K')$, it is enough to show that $\Gal(\K'_v(F)/\K'_v)$ is a Borel subgroup to prove the proposition.

Since $ad{-}bc \equiv 0$ and $c\not\equiv 0$ modulo~${\pi_v}$, we have
\begin{eqn}\label{cong:Fabcd}
F(T) \equiv c\Big(T^q + \frac ac\,\Big) \Big(T + \frac dc\,\Big) \pmod{\pi_v} \,,
\end{eqn}
and, since $d^qc \neq ac^q \bmod{\pi_v}$, we deduce that ${-}\frac dc$ is a simple root of $F \bmod{\pi_v}$. By Hensel's Lemma, there exists a root $r_0 \in \K'_v$ of $F$ that is $v$-integral and congruent to ${-}\tfrac dc$ modulo~$\pi_v$. The group $\Gal(\K'_v(F)/\K_v') \subset G$ stabilizes the element of $\P^1(\F_q)$ corresponding to $r_0$, hence it is contained in a Borel subgroup of $G$. Since Borel subgroups have cardinality $q(q{-}1)$, in order to prove the proposition it is enough showing that $[\K'(F): \K']$ is at least $q(q{-}1)$. We show that the inertia degree of $\K' \subset \K'(F)$ is at least $q(q{-}1)$.

%\begin{change}	
%Let $r_0, r_1,\ldots, r_q$ be all the roots of $F$, with $r_0$ congruent to $-d/c$ modulo~$\pi_v$ and all the other roots congruent to a $q$-th root of $-a/c$. Since there is only one such $q$-th root then $r_1, \ldots, r_q$  are congruent to an element $r$ as in the hypothesis \eqref{eq:congruence_derivative} (notice that since $cr^q+a$ is $v$-integral and $c$ is invertible in $\calO_v$ then $r$ is also $v$-integral). 
%In particular $r_1-r$ has positive valuation and it is a root of 
%\[
%\begin{aligned}
%	F_0 (T) &:= F(T + r) =  c_0T^{q+1} + d_0 T^q + a_0 T +b_0  \quad \in \K'_v[T]\,,
%	\\ & \, \equiv c_0 T^q\left(T+\tfrac{d_0}{c_0} \right) \pmod{\pi_v}
%\end{aligned}
%\]
%where $a_0, b_0, c_0,d_0$ are defined by the equation above and the congruence is obtained translating \eqref{cong:Fabcd}. 
%By hypothesis \eqref{eq:congruence_derivative} we have $v(b_1) = v(F(r))=1$ .
%Moreover, $v(\tfrac {d^q}{c^q} - \tfrac ac)=0$, implying that  $r \not\equiv -\tfrac dc$ and that $v(\tfrac{d_0}{c_0})=0$. The Newton polygon of $F_0$ implies 
%\begin{subeqn}\label{eq:lemma_poly_valuation_roots}
% v(r_1-r) =  \frac 1 q\,.
%\end{subeqn}
%\end{change}
%, hence  then there exists a $v$-integral element $\gamma \in \K'_v$ such that $F(T) \equiv c(T+\gamma)^q(T+ d/c) \bmod{\pi_v}$.
\changee{
	By hypothesis \eqref{eq:congruence_derivative}, $\tfrac ac$ is the $q$-th power modulo $\pi_v$ of an element $\gamma$.
} 
Up to the substitution $F(T) \mapsto F(T-\gamma)$, which does not change $\K'_v(F)$ nor the quantities $c$, $ad{-}bc$ and $d^qc{-}ac^q$, we can suppose that 
\[
F(T) \equiv  c\, T^q \Big(T + \frac dc\,\Big) \pmod{\pi_v} \,.
\]
This implies that
$v(d/c)=0$, $v(a)\ge 1$ and $v(b)\ge 1$. If we had $v(b)\geq 2$, then the choice $\lambda := d$ would contradict the last congruence in (\ref{eq:congruence_derivative}). Hence we have $v(b)=1$. The Newton polygon of $F$ tells us that the roots
$r_0, \ldots, r_q $ of $F$ in the algebraic closure $\ol{\K'_v}$ of $\K'_v$ satisfy 
\begin{eqn}\label{eq:lemma_poly_valuation_roots}
	v(r_0)=0\,, \quad v(r_1) = \ldots = v(r_q)= \frac 1 q\,.
\end{eqn}

We now consider the polynomial
\[
F_1 (T):= F(T + r_1) = c_1T^{q+1} + d_1 T^q + a_1 T + b_1 = cT^{q+1} + d_1 T^q + a_1 T \quad \in \ol{\K'_v}[T]\,.
\]
The roots of $F_1$ are $r_i{-}r_1$. Using Equation (\ref{eq:lemma_poly_valuation_roots}), we deduce $v(c_1)=v(d_1)=0$ and $v(a_1) > 0$. Using  $a_1d_1{-}b_1c_1=ad{-}bc$, we see that 
$$v(a_1) = v(a_1d_1 {-} c_1b_1) = v(ad{-}bc) = 1\,.$$ The Newton polygon of $F_1$ tells us that
\[
v(r_2-r_1)=\ldots = v(r_q-r_1)= \frac{1}{q-1} \,.
\] 
This, together with Equation (\ref{eq:lemma_poly_valuation_roots}) and the fact that $\K \subset \K'$ is unramified, implies	 that the inertia degree of $\K'_v \subset \K'_v(F)$ is a multiple of $q(q{-}1)$ and consequently that $\Gal(\K'_v(F)/\K')$ is a Borel subgroup of $G$.
\end{proof}

We now prove that, for certain choices of $\K, a,b,c,d$, Equation (\ref{eq:congruence_derivative}) is satisfied.

\begin{Proposition}\label{prop:we_can_use_other_prop}
Let $\K$ be a field extension of $\F_q$, let $u_1, u_2, u_3,w_1, w_2, w_3$ be distinct elements of $\K$ and
let $a,b,c,d \in \K$ be the elements defined by the following equality in $\text{GL}_2(\K)$
\[
\mt abcd {=} \mt{w_3^q}{w_1^q}{1}{1} \!\!\mt{w_1^q-w_2^q}{0}{0}{w_2^q-w_3^q} \!\!\mt{u_2-u_3}{0}{0}{u_1-u_2} \!\!\mt{1}{-u_1}{-1}{u_3}\!.
\]
Then $\smt abcd$ sends the three elements $u_1,u_2,u_3 \in \P^1(\K)$ to $w_1^q,w_2^q,w_3^q \in \P^1(\K)$ respectively. 

Suppose, moreover, that $\K$ is equipped with a discrete valuation $v:\K^\times \to \Z$, that $u_i, w_i$ are $v$-integral, that $v(w_i{-}w_j) = v(w_3{+}u_i) = v(u_2{-}u_3) = 0 $ for $i \neq j$ and that $v(u_1{-}u_2)=1$. 
Then $a,b,c,d$ 
satisfy (\ref{eq:congruence_derivative}).
\end{Proposition}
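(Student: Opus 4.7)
The plan is to verify both claims by unwinding the factorization $M = M_1 M_2 M_3 M_4$ of $\smt abcd$, since each factor acts transparently on $\P^1$.

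For the first claim I would trace $u_1, u_2, u_3$ through the composition. The rightmost factor $\smt{1}{-u_1}{-1}{u_3}$ maps $u_1\mapsto 0$, $u_3\mapsto\infty$ and $u_2\mapsto (u_1-u_2)/(u_2-u_3)$; the next diagonal is chosen precisely so that the composed map sends $u_2\mapsto 1$; the following diagonal scales $(0,1,\infty)$ to $(0,(w_1^q-w_2^q)/(w_2^q-w_3^q),\infty)$; and a short direct check shows that $\smt{w_3^q}{w_1^q}{1}{1}$ sends these three values to $w_1^q, w_2^q, w_3^q$ respectively.

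For the second claim, with the abbreviations $\alpha := u_1-u_2$, $\beta := u_2-u_3$, $A := (w_1^q-w_2^q)\beta$, $B := (w_3^q-w_2^q)\alpha$, multiplying out yields
\[
c = A+B,\quad d = -u_1 A - u_3 B,\quad a = w_3^q A + w_1^q B,\quad b = -u_1 w_3^q A - u_3 w_1^q B,
\]
with $v(A)=0$ and $v(B)=1$. The determinant $ad-bc$ equals $\det M_1 \det M_2 \det M_3 \det M_4$, and only $\det M_3 = \alpha\beta$ has positive valuation, giving $v(ad-bc)=1$. Using $(X+Y)^q=X^q+Y^q$, expansion yields
\[
d^q c - a c^q = -(u_1^q+w_3^q)A^{q+1} - (u_1^q+w_1^q)A^q B - (u_3^q+w_3^q)AB^q - (u_3^q+w_1^q)B^{q+1},
\]
whose leading term has valuation $0$ by $v(u_1+w_3)=0$ (combined with $(u_1+w_3)^q = u_1^q+w_3^q$), while the other three terms have valuation $\geq 1$; this gives $v(d^qc-ac^q)=0$.

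For the third and hardest condition, I would argue that any $\lambda\in\calO_v^\times$ potentially violating the congruence must satisfy $\lambda^q \equiv -(u_1^q+w_3^q)A^q \pmod{\pi_v}$, which forces $\lambda \equiv -(u_1+w_3)A \pmod{\pi_v}$; for every other $\lambda$ the leading $A\lambda^q$ on the left and $-(u_1^q+w_3^q)A^{q+1}$ on the right already disagree modulo $\pi_v$. For the dangerous $\lambda$, the $A^{q+1}$ contributions from both sides cancel modulo $\pi_v^2$, and dividing the remaining difference by $\pi_v$ yields a single residue combining the contributions of $B\lambda^q$, $c^q(ad-bc)\lambda^{-1}$ and $(u_1^q+w_1^q)A^q B$, each legitimately truncated using $c^q \equiv A^q \pmod{\pi_v^2}$ (valid because $q\geq 2$, so $v(B^q)\geq 2$) and the analogous $\lambda^q \equiv \lambda_0^q \pmod{\pi_v^2}$ for $\lambda_0$ the residue of $\lambda$. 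The main obstacle is showing this residual expression is nonzero modulo $\pi_v$: after substituting the definitions of $A,B$ and of the cofactor $K$ (with $ad-bc = K\alpha$) in terms of the differences $w_i^q-w_j^q$, $\beta$, $u_3-u_1$ and $u_1+w_3$, the residue factors as a product of quantities which are units by the stated hypotheses. The calculation is mechanical but requires invoking each valuation hypothesis in the right place and keeping track of signs and of the $q$-th power identities in characteristic $p$.
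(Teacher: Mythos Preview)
Your proposal is correct and follows essentially the same strategy as the paper's proof: trace $u_i$ through the factorization for the first claim, use the product of determinants for $v(ad-bc)=1$, expand $d^qc-ac^q$ directly to read off $v(d^qc-ac^q)=0$, and for the final congruence first pin down $\lambda$ modulo $\pi_v$ from the order-zero terms and then verify the order-one term is a unit. The only difference is cosmetic---you organize the computation via the auxiliary quantities $A,B$ (with $v(A)=0$, $v(B)=1$), whereas the paper expands $d^qc-ac^q$ directly as a polynomial in $u_1-u_2$ and chooses the explicit representative $\lambda=-(u_1-u_3)(u_1+w_3)(w_1-w_2)^q$, which agrees with your $-(u_1+w_3)A$ modulo $\pi_v$ since $u_1-u_3\equiv u_2-u_3$.
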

\begin{proof}
To prove the first part we notice that, given distinct elements $x,y,z \in \K$, the matrix
\[
N_{x,y,z} := \mt{z}{x}{1}{1} \mt{x-y}{0}{0}{y-z} 
\]
is invertible and acts on $\P^1(\K)$ sending $0,1,\infty = \spv 10$ to $x,y,z$ respectively. Using this definition we have $
\smt{a}{b}{c}d = \det(N_{u_1,u_2,u_3})  N_{w_1^q,w_2^q,w_3^q}  N_{u_1,u_2,u_3}^{-1}$, hence $\smt abcd$ acts on $\P^1(\K)$
sending
\[
u_1\mapsto 0\mapsto w_1^q\,, \quad u_2\mapsto 1\mapsto w_2^q\,, \quad u_3\mapsto \infty \mapsto w_3^q\,.
\]
Now the second part. Computing $\det(N_{u_1,u_2,u_3})$ and $\det(N_{w_1^q,w_2^q,w_3^q})$ we see that 
\[
ad - bc = (u_1-u_2)(u_2-u_3)(u_1-u_3)(w_1-w_2)^q(w_2-w_3)^q(w_1-w_3)^q
\]
hence $v(ad{-}bc)=v(u_1{-}u_2)=1$ (the other factors have valuation zero by hypothesis or, in the case of $u_3{-}u_1$, because they are the sum of $u_3{-}u_2$ and $u_2{-}u_1$, whose valuations are $0$ and $1$). Writing $a, b, c, d$ as polynomials in the $u_i$'s and the $w_i$'s, we check that there is a multivariate polynomial $f$ such that 
\begin{eqn}\label{eq:sviluppo_inv2}
	\begin{aligned}\label{eq:inv2}
		d^qc - ac^q  =& f(u_1,u_2,u_3,w_1,w_2,w_3)\cdot\big(u_1 - u_2\big)^q \\
		& +	(u_1 - u_3)^q (w_1 - w_2)^{q^2} (w_1 - w_3)^q (u_2 + w_2)^q \cdot\big(u_1 - u_2\big) \\
		& - (w_1 - w_2)^{q^2+q}(u_1 -  u_3)^{q+1} (u_1 + w_3)^q \,.
	\end{aligned}
\end{eqn}
Since $v(w_2{-}w_1) = v(u_3{-}u_1) = v(w_3{+}u_1)=0$, we have $v(d^qc{-}ac^q)=0$. Let $\calO_v$ be the integral sub-ring of $\K$, let $\pi_v:=u_1{-}u_2$, which is a generator of the maximal ideal of $\calO_v$. 

\begin{change}
We are left to check the second and third line of \eqref{eq:congruence_derivative}. Before proceeding, we notice that, since $u_2\equiv u_1 \pmod{\pi_v}$, we have the congruences
	\begin{equation*}
		\begin{aligned}
			& a \equiv  w_3^q (w_1 - w_2)^q (u_1 - u_3),  \quad 
			& b \equiv w_3^q (-w_1 + w_2)^q (u_1 - u_3) u_1 \pmod{\pi_v} ,
			\\ 
			&
			c \equiv  (w_1 - w_2)^q (u_1 - u_3) , 
			\quad
			& d \equiv  (-w_1 + w_2)^q (u_1 - u_3) u_1 \pmod{\pi_v} .
		\end{aligned}
	\end{equation*}
	
	For the second line of \eqref{eq:congruence_derivative} it is enough to take $\gamma = w_3$. 
%	\[
%	a-c\gamma^q = (w_2 - w_3)^q (w_1 - w_3)^q (u_2-u_1  ) \equiv 0 \pmod \pi_v
%	\] 
%	which is divisible by $\pi_v$. 
\end{change}
Now suppose by contradiction that there exists $\lambda \in \calO_v^\times$ such that 
\begin{eqn}\label{eq:absurd_cong}
	c\lambda^q - a^q(ad-bc)\lambda^{-1}  \equiv d^qc - ac^q  \pmod{\pi_v^2}\,.
\end{eqn}
Using also that $ad{-}bc\equiv 0 \bmod{\pi_v}$, we deduce that $\lambda$ must satisfy
% and the equality $c = (w_1 {-} w_2)^q(u_1 {-} u_3) - \pi_v(w_1{-}w_3)^q$, 
\[
\lambda^q \equiv \frac{d^qc - ac^q}{c} \equiv %\frac{((w_2{-}w_1)^{q^2+q}(u_3{-} u_1)^{q+1} (w_3{-}u_1)^q)(\Zp)}{((w_2 {-} w_1)^q(u_3 {-} u_1))(\Zp)} =  
\Big(-(u_1- u_3)(u_1 +w_3)(w_1-w_2)^{q}\Big)^q \pmod{\pi_v} \,,
\]
If we replace $\lambda$ by some $\lambda' \equiv \lambda$ modulo $\pi_v$, then the congruences (\ref{eq:congruence_derivative}) are still satisfied, hence we may suppose $\lambda = {-}(u_1{-} u_3)(u_1{+}w_3)(w_1{-}w_2)^{q}$. Substituting $\lambda$ and (\ref{eq:inv2}) in (\ref{eq:absurd_cong}) we get
\[
\begin{aligned}
0 &\equiv c^q(ad-bc) + (d^qc - ac^q)\lambda - c\lambda^{q+1}  \\
%	& \equiv \pi_v (w_1{-}w_2)^{q^2+q}(w_1{-}w_3)^q(u_1{-} u_3)^{q+1}\left((u_1{-}u_3)(w_2{-}w_3)^q {-}(u_2 {+} w_2)^q (u_1{+}w_3) {+} (u_1{+}w_3)^{q+1}   \right) \\
& \equiv - \pi_v (w_1{-}w_2)^{q^2+q}(w_1{-}w_3)^q(u_1{-} u_3)^{q+1} (w_2{-}w_3)^q(w_3{+}u_3)\qquad \pmod {\pi_v^2}
\end{aligned}
\]
which is absurd because $v(w_i{-}w_j)=v(u_1{-}u_3)=v(w_3{+}u_3)=0$.
\end{proof}

We now prove the main result of this section. Varieties like $\calC$ in the following lemma arise in Sections \ref{sec:3-2} and \ref{sec:43} when imposing conditions \ref{item2:split} and \ref{item3:gamma}. 
\begin{Lemma}\label{lem:good_curves}
Let $\F_q \subset k$ be an extension of finite fields and let $\calB/k$ be a geometrically irreducible variety. Let
$u_1, u_2, u_3$, $w_1, w_2, w_3$ be distinct elements of  $\ol{k}(\calB)$ and suppose there exists an irreducible divisor $\Zp \subset \calB_{\ol{k}}$, generically contained in the smooth locus of $\calB$, such that $u_i,w_i$ are defined on the generic point of $\Zp$ and such that $Z$ is a zero of order $1$ of  $u_1{-}u_2$ and it is not a zero of $w_3{+}u_i, u_2{-}u_3, w_i{-}w_j$ for $i{\neq}j$.

Let $\calC \subset \calB \times \PGL \times \A^1$ be the variety whose points are the tuples $(R,\smt abcd,z)$ such that
\[
\begin{aligned}
&u_i(R) \text{ are defined and distinct,}\,\, w_i(R)\text{ are defined and distinct,} \,\, d^qc - ac^q \neq 0, \\% \quad z \notin \F_q,\\
&%\qquad \qquad \frac{(a^qc - ab^q)^{q+1}}{a^{q^2+1}(ad  -  bc)^q} = \frac{(z^{q^2}-z)^{q+1}}{(z^q-z)^{q^2+1}}
\qquad \qquad \qquad \smt abcd \cdot u_i(R) = w_i^q(R) \text{ for }i =1,2,3 \qquad \text{and} \\
&\qquad (d^qc - ac^q)^{q+1}(z^q-z)^{q^2-q} = c^{q^2+1}(ad - bc)^q \left((z^{q^2}-z)/(z^q-z)\right)^{q+1}  
\end{aligned}
\]
If $\calC$ is defined over~$k$, then its geometrically irreducible components are defined over~$k$ and pairwise disjoint. \begin{change}
Moreover all components have the same dimension as $\calB$ and the natural projection $\calC \to \calB$, $(R,\smt abcd, z) \mapsto R$, is dominant, with finite fibres and degree $q^3-q$.
\end{change}
\end{Lemma}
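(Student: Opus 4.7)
The strategy is to reduce the statement to Proposition \ref{prop:pure_fields} applied at the generic point of $\calB$. Since $\calB$ is geometrically irreducible over $k$, the field $\bar K := \ol{k}(\calB)$ has field of constants $\ol k$; the irreducible divisor $\Zp \subset \calB_{\ol k}$, generically in the smooth locus of $\calB$, gives a discrete valuation $v$ on $\bar K$. The hypotheses on $\Zp$—namely $v(u_1-u_2)=1$ and $v(u_2-u_3)=v(w_i-w_j)=v(w_3+u_i)=0$ with all $u_i,w_i$ being $v$-integral—are precisely the hypotheses of Proposition \ref{prop:we_can_use_other_prop}. Applying it, the entries $a,b,c,d\in\bar K$ of the unique matrix in $\PGL_2(\bar K)$ sending $u_i$ to $w_i^q$ satisfy the congruence (\ref{eq:congruence_derivative}); Proposition \ref{prop:pure_fields} then yields that the splitting field $\bar L$ of $F(T):=cT^{q+1}+dT^q+aT+b$ over $\bar K$ has field of constants $\ol k$.

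The next key step is to relate the $z$-equation to $\bar L$. Using the additive identity $z^{q^2}-z=(z^q-z)^q+(z^q-z)$, one obtains $(z^{q^2}-z)/(z^q-z)=(z^q-z)^{q-1}+1$. Substituting $s:=(z^q-z)^{q-1}$ turns the defining equation of $\calC$ into
\[
N^{q+1}s^q = c^{q^2+1}\Delta^q(s+1)^{q+1},
\]
with $N=d^qc-ac^q$ and $\Delta=ad-bc$. A Möbius change of variable $t=s/(s+1)$ converts this into a polynomial in $t$ of the shape $\tilde c\, t^{q+1}+\tilde d\, t^q+\tilde a\, t+\tilde b$, to which Proposition \ref{prop:pure_fields} applies again; I would check that the resulting splitting field coincides with $\bar L$, so that adjoining $t$, then $s$, then $z^q-z$, and finally $z$ stepwise to $\bar K$ produces extensions whose splitting fields continue to have field of constants $\ol k$.

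The geometric conclusion now follows by standard function-field bookkeeping. Since $\smt abcd$ is uniquely determined by $R\in\calB$, the projection $\calC\to\calB\times\A^1$ is a closed immersion on a dense open, and $\calC$ is generically cut out by a single polynomial $\tilde P(z)\in k(\calB)[z]$ obtained by eliminating the $\PGL_2$ coordinates. Each $k$-irreducible component of $\calC$ corresponds to an irreducible factor of $\tilde P$ over $k(\calB)$; its function field embeds in the splitting field of $\tilde P$ over $\bar K$, which has field of constants $\ol k$ by the previous step. Hence each such function field has field of constants $k$, meaning the $k$-components are geometrically irreducible—equivalently, the geometrically irreducible components of $\calC$ are defined over $k$. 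Pairwise disjointness follows because $c$, $\Delta$, $N$ are all nonzero throughout $\calC$ by the defining conditions, the discriminant of $\tilde P$ is a product of powers of these, and therefore every fibre of $\calC\to\calB$ is étale, forcing distinct components to be disjoint. The most delicate step I expect is the second paragraph: verifying that the splitting field of the $z$-equation is genuinely $\bar L$ rather than a proper sub- or super-extension, which requires careful Galois bookkeeping through the substitution $s=(z^q-z)^{q-1}$ and the Artin--Schreier-type relation $z^q-z=s^{1/(q-1)}$.
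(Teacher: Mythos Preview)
Your overall strategy---reduce to Proposition \ref{prop:pure_fields} and link the $z$-equation to the splitting field of $F$---is the paper's strategy too, but there are two genuine gaps in the execution.

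First, you apply Proposition \ref{prop:pure_fields} over $\bar K = \ol k(\calB)$ and conclude that the splitting field of $F$ over $\bar K$ has field of constants $\ol k$. This is vacuous: $\ol k$ is algebraically closed, so it is automatically the field of constants of any field containing it (and Proposition \ref{prop:pure_fields} requires the base to be a finite field anyway). The deduction in your third paragraph, that a subfield of something with constants $\ol k$ must have constants $k$, is simply false---take $L=k'(\calB)$ for any $k\subsetneq k'\subset\ol k$. To get anything about $k$ you must apply Proposition \ref{prop:pure_fields} with $\K=k(\calB)$, and for that you need $a,b,c,d\in k(\calB)$, whereas your formulas from Proposition \ref{prop:we_can_use_other_prop} only place them in $\ol k(\calB)$ since the $u_i,w_i$ live there. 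The paper closes this gap by normalizing to $(a_2,b_2,c_2,d_2)=(a/c,b/c,1,d/c)$ and noting that $a/c$, read as a rational function on the projection $\calB_1\subset\calB\times\PGL$, lies in $k(\calB_1)=k(\calB)$ precisely because $\calC$ is assumed defined over $k$. That is the one place the hypothesis ``$\calC$ is defined over $k$'' is genuinely used; your argument never uses it.

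Second, your substitution route does not work as written. With $s=(z^q-z)^{q-1}$ and $t=s/(s+1)$ the equation becomes $-N^{q+1}t^{q+1}+N^{q+1}t^q-c^{q^2+1}\Delta^q=0$, so $\tilde a=0$ and $v(\tilde a\tilde d-\tilde b\tilde c)=v(c^{q^2+1}\Delta^qN^{q+1})=q\ne 1$; the hypotheses of Proposition \ref{prop:pure_fields} fail for this second application. The paper avoids this detour entirely: by \cite[Lemma~2.3]{HidePGL2} the $q^3{-}q$ roots of the $z$-equation are exactly the cross-ratios $(r_i-r_j)/(r_i-r_m)$ of the roots $r_0,\dots,r_q$ of $F$, hence every such $z$ already lies in the splitting field $\Sigma$ of $F$ over $k(\calB)$. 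This yields explicit dominant rational maps $\phi_{i,j,m}\colon\mathcal E\dashrightarrow\calC$ from a geometrically irreducible $k$-variety $\mathcal E$ with $k(\mathcal E)=\Sigma$, whose images are the geometric components of $\calC$. Disjointness then follows because each fibre of $\calC\to\calB_1$ has exactly $q^3{-}q$ points, which the paper proves via the free $\PGLq$-action on the root set outside $\F_{q^2}$; your assertion that the discriminant of the degree-$(q^3{-}q)$ polynomial in $z$ is a monomial in $c,\Delta,N$ may be true but is not obvious.
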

\begin{proof}
We first look at the variety $\calB_0 \subset \calB \times \PGL$ whose points are the pairs $(R,A)$ such that
\[
\begin{aligned}
&u_i(R) \text{ are defined and distinct,}\quad w_i(R)\text{ are defined and distinct,} \\
&\qquad \qquad A \cdot u_i(R) = w_i^q(R) \text{ for }i =1,2,3\,.
\end{aligned}
\]
Since an element $\PGL$ is uniquely determined by its action on three distinct points of $\P^1$, the projection $\calB_0 \to \calB$ is a birational equivalence, whose inverse, by the first part of Proposition \ref{prop:we_can_use_other_prop}, is given by $R \mapsto \left( R, \smt{a_1}{b_1}{c_1}{d_1}\!\!(R) \right)$, where $a_1,b_1,c_1,d_1 \in \ol k(\calB)$ are defined by the following equality in $\mathrm{GL}_2(\ol k(\calB))$
\[
\mt{a_1}{b_1}{c_1}{d_1} {=} \mt{w_3^q}{w_1^q}{1}{1}\!\! \mt{w_1^q-w_2^q}{0}{0}{w_2^q-w_3^q}\!\! \mt{u_2-u_3}{0}{0}{u_1-u_2} \!\! \mt{1}{-u_1}{-1}{u_3}\!.
\]
Let $v \colon \ol{k}(\calB)^\times \to \Z$ be the valuation that determines the order of vanishing in $\Zp$ of a rational function. The second part of Proposition \ref{prop:we_can_use_other_prop} implies that $a_1,b_1,c_1,d_1$ satisfy (\ref{eq:congruence_derivative}), over the field $\ol k(\calB)$.
In particular we have $c_1 \neq 0$ and $v(c_1)=0$. Hence we can define the following rational functions on $\calC$
\[
a_2:= a_1/c_1\,, \quad b_2 :=  b_1/c_1\,, \quad c_2 := 1\,, \quad d_2 :=  d_1/c_1
\] 
which again satisfy (\ref{eq:congruence_derivative}) over the field $\ol k(\calB)$. The advantage of $a_2,b_2,c_2,d_2$ is that, as we now show, they are defined over~$k$. 

\begin{change} Let $\calB_1$ be the open subset of $\calB_0$ where $d^qc-ac^q\neq 0$, which is non-empty and birational to $\calB$ since the function  $d_2^qc_2-a_2c_2^q$ is non-zero on $\calB$ by \eqref{eq:congruence_derivative}. 
Our variety $\calC$ is a finite \'etale cover of $\calB_1$ of degree $q^3-q$: indeed $\calC$ is defined as the subscheme of $\calB_1\times \A^1$ containing the points $(S,z)$ where $z$ safisfies a polynomial of degree $q^3-q$ and since we imposed the condition $ad{-}bc\neq 0$ (since $\calB_1 \subset \calB \times \PGL $) and $d^qc-ac^q\neq 0$, the polynomial has distinct roots. In particular the projection $\calC \to \calB$ is dominant (its image is $\calB_1$) with finite fibres and degree $q^3-q$.
\end{change} Since $\calC$ is defined over~$k$, its projection $\calB_1$ is defined over~$k$ and since $a/c$ is a rational function on $\calB_1$ defined over~$k$, we deduce that $a_2 = a/c$ lies in $k(\calB_1)=k(\calB)$ and analogously $b_2, c_2, d_2 \in k(\calB)$. 
A fortiori $a_2,b_2,c_2,d_2$ satisfy (\ref{eq:congruence_derivative}) inside the field $\K= k(\calB)$. 

We can now apply Proposition \ref{prop:pure_fields} and deduce that $k$ is the field of constants of the extension $k\subset \Sigma$, where $\Sigma$ is the splitting field of 
\[
F(T) := c_2T^{q+1} + d_2T^q + a_2 T + b_2 \,,
\]
over~$k(\calB)$. 
We deduce that there exists a geometrically irreducible variety $\mathcal E/k$ having field of rational functions $\Sigma$. Let $\pi \colon \mathcal E \dashrightarrow \mathcal B$ be the rational map induced by $k(\calB) \subset \Sigma$ and let $r_0,\ldots, r_q \in \Sigma$ be the roots of $F$, interpreted as rational functions on $\mathcal E$. Using \cite[Lemma $2.3$]{HidePGL2} we see that, for any choice of pairwise distinct integers $0 \le i,j,m\le q$,
\[
\begin{aligned}
& z = z_{i,j,m}:= \frac{r_i-r_j}{r_i-r_m} \in \Sigma = k(\mathcal E) \quad \text{ satisfies } \\
& (d_2^qc_2 -a_2c_2^q)^{q+1}(z^q-z)^{q^2-q} = c_2^{q^2+1}(a_2d_2 - b_2c_2)^q  \left((z^{q^2}-z)/(z^q-z)\right)^{q+1} \,.
\end{aligned}
\]
Hence, for each  choice of pairwise distinct integers $0 \le i,j,m\le q$ we get a map
\[
\phi_{i,j,m} \colon \mathcal E \dashrightarrow \mathcal C, \qquad 
S\longmapsto \left(\pi(S), \smt {a_2}{b_2}{c_2}{d_2}(S),z_{i,j,m}(S)\right) \,.
\]
Since $\mathcal E$ is geometrically irreducible, then all the images $\phi_{i,j,m}(\mathcal E)$ are also geometrically irreducible. 

Moreover, since all the $z_{i,j,m}$ are different, the maps $\phi_{i,j,m} $ are distinct and as many as the degree of the projection $\calC\to\calB$, implying that the union of all the images $\phi_{i,j,m}(\mathcal E)$ is dense inside $\mathcal C$. This implies that a geometrically irreducible component of $\calC$ is the Zariski closure of an image $\phi_{i,j,m}(\mathcal E)$. Since $\mathcal E$ and $\phi_{i,j,m}$ are defined over~$k$, then also  $\phi_{i,j,m}(\mathcal E)$ and the geometrically irreducible components of $\calC$ are defined over~$k$. \begin{change}
	Since $\Sigma = k(\mathcal E)$ is a finite extension of $k(\cal B)$, then $\phi_{i,j,m}(\mathcal E)$ and all irreducible components of $\calC$ have the same dimension of $\calB$.
\end{change}

Finally, we prove that the components of $\calC$ are pairwise disjoint. The projection $\pi \colon \calC \to \calB_1$ has finite fibers whose number of $\ol k$-points counted with multiplicity is $q^3{-}q$ , that is the degree, in $z$, of the polynomial 
\[
(d^qc - ac^q)^{q+1}(z^q-z)^{q^2-q} - c^{q^2+1}(ad - bc)^q \left((z^{q^2}-z)/(z^q-z)\right)^{q+1}.
\]
If, by contradiction, there is a point $(R',\smt {a'}{b'}{c'}{d'}, z')$ lying in the intersection of two components of $\calC$, then the fiber $\pi^{-1}(R', \smt{a'}{b'}{c'}{d'})$ has cardinality smaller than $q^3{-}q$, which is equivalent to the following polynomial in $\ol{\F_q}[z]$ having less than $q^3{-}q$ roots 
\[
G(z):= (d'^qc' - a'c'^q)^{q+1}(z^q-z)^{q^2-q} - c'^{q^2+1}(a'd' - b'c')^q \left((z^{q^2}-z)/(z^q-z)\right)^{q+1} \,.
\]
Since $a'd' {-} b'c'\neq 0$ and $d^qc' {-} a'c'^q \neq 0$, there is no root of $G$ that is also a root of $z^q{-}z$ or $\frac{z^{q^2}{-}z}{z^q{-}z}$. In other words, the roots of $G$ lie outside the finite field $\F_{q^2}$ with $q^2$ elements. Moreover, since $G$ is a $\ol{\F_q}$-linear combination of powers of $z^q{-}z$ and $\frac{z^{q^2}{-}z}{z^q{-}z}$, the set of roots of $G$ is stable under the action of $\PGLq$. Since this action is free on $\ol{\F_q}\setminus\F_{q^2}$, the set of roots is large at least $\# \PGLq = q^3{-}q = \deg G$, which is a contradiction.
\end{proof}

\begin{Remark}\label{rem:split_field}
Let $\F_q \subset k$ be a field extension and let $F= cT^{q+1} {+} dT^q {+} a T {+} b$ be a polynomial with coefficients in $k$ such that, $ad{-}bc \neq 0$ and $a^qc{-}dc^q \neq 0$.
By \cite[Theorem $4.3$ and Lemma $2.3$]{HidePGL2}, the  polynomial  $F$ splits in linear factors over~$k$ if and only if there exists an element $z \in k$  such that
\[
(d^qc - ac^q)^{q+1}(z^q-z)^{q^2-q} = c^{q^2+1}(ad - bc)^q \left((z^{q^2}-z)/(z^q-z)\right)^{q+1}\,. 
\]
In particular, in the notation of Lemma \ref{lem:good_curves}, the field of rational functions of any component of $\calC$ is the splitting field of $F$.
\end{Remark}

\section{Descent 3-to-2}\label{sec:3-2}
In this section we finish the proof of Proposition \ref{secondhalfdescentprop}, started in Section \ref{sec:idea_descent}. Following the notation of Section \ref{sec:idea_descent} when $\paramdeg=3$, let $k$ be a finite extension of $\F_q$ of degree at least $80$, let $\p$ be a non-trap point on $E$ such that $[k(\p):k]=3$, and let $\sigma$ be a generator of $\Gal(k(\p)/k)$. Then, we look for a function $f \in k(E)$ and a matrix $\smt abcd \in \PGL(k)$ satisfying properties \ref{item1:degree}, \ref{item2:split}, \ref{item3:gamma}, \ref{item4:traps}: we describe a curve $\calC$ whose $k$-points give such pairs $(f,\smt abcd)$,  and we prove that  $\#\calC(k)> \tfrac 12 (\# k)$. \changee{We also give an algorithm to find a point in $\calC(k)$ in polynomial time in $q\log(\# k)$}

\subsection{The definition of $\calC$} \label{sec:3-2_defC}
Property \ref{item1:degree} requires that $f \in k(E)$ has at most two poles: we look for $f$ of the form 
\begin{subeqn}\label{eq:f_P}
f_P := \frac{y - y(P)}{x-x(P)}
\end{subeqn}
with $P$ in $E(k)\setminus\{O_E\}$. Indeed $f_P$ has exactly two simple poles: $O_E$ and ${-}P$.

Property \ref{item2:split} requires the polynomial $cT^{q+1} {+ }dT^q {+ }a T {+ }b$ to split completely in $k$, for which, as recalled in Remark \ref{rem:split_field}, 
it is sufficient that $d^qc \neq ac^q$ and that
\begin{subeqn}\label{eq:32_item2}
(d^qc - ac^q)^{q+1}(z^q-z)^{q^2-q} = c^{q^2+1}(ad - bc)^q \left((z^{q^2}-z)/(z^q-z)\right)^{q+1}\,,
\end{subeqn}
for some $z$ in $k$.

Notice that definition (\ref{eq:f_P}) makes sense for $P\in E(\ol{\F_q})\setminus\{O_E\}$ and that we have the following symmetry: $f_P(P')=f_{P'}(P)$ for all $P,P' \in E(\ol{\F_q})\setminus O_E$.
Using this and the fact that  $h^\phi(\phi(P))=h(P)^q$ for all $h \in \ol{\F_q}(E)$ and $P \in E(\ol{\F_q})$, we have 
\[
\begin{aligned}
f_P(\sigma^i\p) = f_{\sigma^i\p}(P)  \,, \quad 
f_P^\phi(\sigma^i\p {+} P_0) =  f_P^\phi(\phi(\sigma^i\Pp)) = f_P(\sigma^i\Pp)^q = f_{\sigma^i\Pp}(P)^q \,,
%- \frac{y(\sigma^i\p {+} P_0) - y(P)^q}{x(\sigma^i\p {+} P_0)-x(P)^q} = -\frac{y(\sigma^i\Pp)^q - y(P)^q}{x(\sigma^i\Pp)^q-x(P)^q} = - f_{\sigma^i\Pp}(P)^q \,,
\end{aligned}
\]
where $\Pp$ is the unique point on $E$ such that $\phi(\Pp) = \p + P_0$. Hence \ref{item3:gamma} is equivalent to 
\begin{subeqn}\label{eq:32_item3}
\smt abcd\cdot f_{\sigma^i \p}(P) = - f_{\sigma^i\Pp}(P)^q  \quad \text{for each } i =0,1,2\,.
\end{subeqn}
We now impose \ref{item4:traps}. Let $\Bp$ be a point on $E$
such that $\phi(\Bp)=\Bp+P_0$. 
If the rational function $cf_P^{q+1}{+}df_P^q{+} af_P {+} b$ vanishes on $\Bp$, then we have $\smt abcd \cdot f_\Bp(P) = -f_\Bp(P)^q$. This, if we also assume Equation (\ref{eq:32_item3}) and that $f_{\sigma^i\p}(P)$ are distinct, implies that the cross ratio of $f_{\p}(P)$, $f_{\sigma \p}(P)$, $f_{\sigma^2 \p}(P)$, $f_\Bp(P)$ equals the cross ratio of $f_{\Pp}(P)^q, f_{\sigma \Pp}(P)^q, f_{\sigma^2 \Pp}(P)^q$, $f_\Bp(P)^q$.
Hence, assuming (\ref{eq:32_item3}),  condition \ref{item4:traps} is implied by 
\begin{subeqn}\label{eq:32_item4bis}
f_{\sigma^i\p}(P)\neq f_{\sigma^j\p}(P)\text{ for all }  i\neq j \in \{0,1,2\}
\end{subeqn}
together with
\begin{subeqn}\label{eq:32_item4}
\begin{aligned}
&\text{for} \text{ 
		all } \Bp \text{ such that } \phi(\Bp) = \Bp+P_0 : \qquad  \S\neq {-}B \quad \text{and} \\
	&\!\CR\big (f_{\p}(P), f_{\sigma \p}(P), f_{\sigma^2 \p}(P), f_\Bp(P)) \neq \CR(f_{\Pp}(P), f_{\sigma \Pp}(P), f_{\sigma^2 \Pp}(P), f_\Bp(P))^q \,,
\end{aligned}
\end{subeqn}
where, given four elements $\lambda_1, \lambda_2, \lambda_3, \lambda_4 \in \P^1(\ol{\F_q})$, we denote their cross-ratio by
\[
\CR(\lambda_1,\lambda_2,\lambda_3,\lambda_4) = \frac{(\lambda_3-\lambda_1)(\lambda_4-\lambda_2)}{(\lambda_2-\lambda_1)(\lambda_4-\lambda_3)}  \in \P^1(\ol{\F_q}) \,.
\]
Finally we define  $E':=E\setminus\{O_E, {-}\p, {-}\Pp,\ldots, {-}\sigma^2\p, {-}\sigma^2\Pp\}$, so that $f_{\sigma^i\Pp}$ and $f_{\sigma^i\p}$ are regular on $E'$, and we define $\calC \subset E' \times \PGL \times \A^1$ as the curve made of points $(P,\smt abcd,z)$ that satisfy Equations (\ref{eq:32_item3}), (\ref{eq:32_item2}), (\ref{eq:32_item4bis}) and (\ref{eq:32_item4}), and $d^qc {-}ac^q\neq 0$. 

Notice that even though the equations $\smt abcd f_{\sigma^i \p}(P) = -f_{\sigma\Pp}^q(P)$ have coefficients in the field $k(\p)$, the curve $\calC$ is defined over~$k$: the Galois group of $ k \subset k(\p)$ permutes these equations $\smt abcd f_{\sigma^i \p}(P) = -f_{\sigma\Pp}^q(P)$.
We constructed $\calC$ so that, for any point $(P,\smt abcd,z) \in \calC(k)$, the pair $(f_P,\smt abcd)$ satisfies properties \ref{item1:degree}, \ref{item2:split}, \ref{item3:gamma} and \ref{item4:traps}.
\subsection{The irreducible components of $\calC$}
In this subsection we prove that all the geometrically irreducible components of $\calC$ are defined over~$k$. We can leave out (\ref{eq:32_item4}) from the definition of $\calC$. Our strategy is applying Lemma \ref{lem:good_curves} to the variety $\calB = E'$, using the rational functions $u_i =f_{\sigma^{i-1}\p}$,
$w_i ={-}f_{\sigma^{i-1}\Pp}$ and the irreducible divisor $\Zp$ equals to the point ${-}\p{-}\sigma{\p} \in \calB(\ol{\F_q}) \subset E(\ol{\F_q})$.

Notice that, given distinct points $P', P'' \in E(\ol{\F_q})\setminus\{O_E\}$, the function $f_{P'}{-}f_{P''}$ is regular at $O_E$ and moreover $(f_{P'}{-}f_{P''})(O_E)=0$.  Since the sum of zeroes and poles of a rational function is equal to $O_E$ in the group $E(\ol{\F_q})$, we deduce that, given distinct points $P', P'' \in E(\ol{\F_q})\setminus\{O_E\}$,
\begin{subeqn}\label{eq:zeroes_superuseful}
\begin{aligned}
	&f_{P'} {-} f_{P''} \text{ has two simple poles, namely ${-}P'$ and ${-}P''$} \\  &\text{and two zeroes counted with multiplicity, i.e. $O_E$ and ${-}P'{-}P''$.}
\end{aligned}
\end{subeqn}
Let $\Zp := {-}\p{-}\sigma{\p}$. By (\ref{eq:zeroes_superuseful}) and the fact that $\p$ is not a trap, the point $\p$ is not a pole of any of the $u_i$ and the $w_i$ and it is not a zero of any of the functions 
$u_2{-}u_3$, $w_3{+}u_i$ and $w_i{-}w_j$ for $i \neq j$: if, for example, ${-}f_{\sigma \Pp}$ is not regular on $\Zp$, then $\Zp = {-}\Pp$. Hence, using that $\sigma$ acts as $\phi^l$ on $E(\ol{\F_q})$ for $l:=[k:\F_q]$, we have
\[
Q + P_0 = \phi(\Pp) = \phi(-\Zp) = \phi^{l+1}(\p) + \phi(\p)  \, \implies  \, \phi^{l+1}(\p) = (1-\phi)(\p) + P_0 \,, 
\]
hence
\begin{subeqn}\label{eq:use_traps}
\begin{aligned}
	& \phi^{3}(\p) = \phi^{3l+3}(\p) = \phi^{2l+2}\left( (1-\phi)(\p) + P_0 \right) = ((1-\phi){\circ} \phi^{2l+2})(\p) + P_0  \\
	&  = ((1-\phi){\circ}\phi^{l+1})\left( (1-\phi)(\p) + P_0 \right) + P_0  = ((1-\phi){\circ}(1-\phi))(\phi^{l+1}(\p)) + P_0 \\
	& = (1-\phi)^2\left( (1-\phi)(\p) + P_0 \right) + P_0 = (1-\phi)^3(\p) + P_0 \,,
\end{aligned}
\end{subeqn}
implying that
\[
((2\phi-1){\circ}(\phi^2-\phi+1))(\p) = (\phi^3 + (\phi-1)^3)(\p) = P_0\,,
\]
which contradicts the hypothesis that $\p$ was not a trap point. Moreover, by (\ref{eq:zeroes_superuseful}), the function $f_{\p}{-}f_{\sigma \p}$ has a simple zero in $\Zp$.
 
\begin{change}
Hence, applying Lemma \ref{lem:good_curves} with $\calB = E'$, and $u_i =f_{\sigma^{i-1}\p}$,
$w_i ={-}f_{\sigma^{i-1}\Pp}$, we deduce that, up to forgetting (\ref{eq:32_item4}) from the definition of $\calC$, then  all the geometrically irreducible components of $\calC$ are curves defined over~$k$ that map dominantly on $E'$. This does not change after imposing condition (\ref{eq:32_item4}), which only restricts to an open dense of $E'$.
\end{change}
%Hence, by Lemma \ref{lem:good_curves}, all the geometrically irreducible components of $\calC$ are defined over~$k$ and disjoint.

\subsection{$k$-rational points on $\calC$ and where to find them}\label{subsec:points_32}
We now prove that $\#\calC(k)$ is larger than $\tfrac 1{2} \# k$. The curve $\calC$ is contained in the open subset of $(E \setminus \{O_E\}) \times \PGL \times \A^1$ made of points $((x,y), \smt abcd, z)$ such that $c \neq 0$. Hence  $\calC$ is contained in $\A^6$, with variables $x,y$, $a,b,d$, $z$ and it is defined by the following equations:
\begin{itemize}
\item $0=p_1:=W(x,y)$, the Weierstrass equation defining $E$;
\item $0=p_2 :=(d^q {-} a)^{q+1}(z^q{-}z)^{q^2{-}q} - (ad {-} b)^q (\tfrac{z^{q^2}{-}z}{z^q{-}z})^{q+1}$, 
the dehomogenization of (\ref{eq:32_item2}) in $c$; 
\item $0=p_i(x,y,a,b,d)$ for $i = 3,4,5$, obtained by (\ref{eq:32_item3})  after dehomogenizing in $c$,  substituting $f_{\sigma^i\p}(P)$ and $f_{\sigma^i\Pp}(P)$ by their expressions in $x,y$ and clearing denominators;
\item a number of conditions $0\neq q_j$ ensuring that $\S\neq {-}\sigma^i\p$, $\S \neq {-} \sigma^i\Pp$, $d^q {-} a\neq 0$, $ad-b\neq0$, that $f_{\sigma^i\p}(P)$ are pairwise distinct 
and that (\ref{eq:32_item4}) is satisfied.
\end{itemize} 
In particular, $\calC$ can be seen as a closed subvariety of $\A^7$, with variables $x,y$, $a,b,d,z$ and $t$, and it is  defined by the equations $p_1=0, \ldots, p_5=0$ and $0=p_6:=tq_1\cdots q_r -1$. 

Let $\calC_1, \ldots, \calC_s$ be the irreducible components of $\calC$. % Since $\calC$ is the disjoint union of the $\calC_i$, u
By \cite[Remark $11.3$]{Npoints}, %using Grothendieck-Lefschetz Trace Formula %(see \cite[Thm. $13.4$]{MilTrace}) and Deligne's purity theorem \cite[Thm. 1, p. 314]{Del}
we have
\begin{subeqn}\label{eq:Weil32}
\# \calC(k) \ge \# \calC_1(k) \ge \# k - (\totaldeg-1)(\totaldeg-2)(\# k)^{\frac 12} - K(\calC_1) \,,
\end{subeqn}
where $\totaldeg$ is the degree of $\calC_1$ and $K(\calC_1)$ is the sum of the Betti numbers of $\calC$ relative to the compact $\ell$-adic cohomology. Since $\calC_1$ is a component of $\calC$ then 
\begin{subeqn}\label{eq:degD32}
\totaldeg %\leq \delta (\calCD') 
\leq \deg(p_1)\cdots \deg(p_6) \,.
\end{subeqn}
Since $\calC$ is the disjoint union of the $\calC_i$, the Betti numbers of $\calC$ are the sums of the Betti numbers of the $\calC_i$ and using \cite[Corollary of Theorem 1]{Katz} we deduce that 
\begin{subeqn}\label{eq:BettiD32}
K(\calC_1) \leq K(\calC) \leq 6 \cdot 2^{6}\cdot \left(3+6\max_{i=1,\dots ,6}\{\deg(p_i)\}\right)^{8}\,.
\end{subeqn}
Since $\deg p_1 \le 3$, $\deg p_2 \le q^3{+}q$, $\deg p_3, \ldots, \deg p_5 \le q{+}2$, and finally $\deg p_6 \le 8q^2+29q+29$, then Equations (\ref{eq:Weil32}), (\ref{eq:degD32}) and (\ref{eq:BettiD32}) imply that $\#\calC(k)> \tfrac 12 (\# k)$ when $\# k \ge q^{80}$ and $q \ge 3$.

\begin{change}
To finish the proof Proposition \ref{firsthalfdescentprop} we describe a procedure to find a $k$-rational point on $\calC$. 
The algorithm in Proposition \ref{firsthalfdescentprop} then works as explained at the end of Section \ref{sec:idea_descent}.

We look for a point on $\calC$ in a probabilistic way, by successively finding the roots of univariate polynomials (e.g. by Cantor-Zassenhaus \cite{FactorCZ} or by von zur Gathen-Shoup, \cite{FactorShoupGathen}). More in detail we proceed as follows.
\begin{itemize}
	\item First take a random $x\in k$ and look for all $y\in k$ such that $P = (x,y)$ lies on $E$; if the Weiestrass equation $p_1$ does not have its two roots $y$ in $k$ we change $x$ with a new random element of $k$.
	\item For all $y$ computed above, compute $\smt abcd \in \PGL(k)$ satisfying \eqref{eq:32_item3}; this is a matter of solving linear equations (the polynomials $p_3,p_4,p_5$) which always have a unique solution when the values $f_{\sigma^i\p}(P)$ are distinct and the values $f_{\sigma^i\Pp}(P)$ are distinct (is they are not distinct we restart from the first step, in light of \eqref{eq:32_item4bis}).
	\item For all $y,\smt abcd$, compute an element $z\in k$ satisfying \eqref{eq:32_item2}, or equivalently $p_2(z)=0$; the polynomial $p_2(z)$ has degree at most $q^3-q$ and if it has no roots	 $z\in k$, we restart from the first step.
	\item Finally, check if inequalities \eqref{eq:32_item4} hold (this is equivalent to looking for a zero $t$ of $p_6$): if they hold then $(P,\smt abcd, z)$ is a point on $\calC$, if they never hold we restart from the first step.
\end{itemize}
The expected running time of each step is polynomial in $q\log(\# k)$ (see e.g. \cite[Theorem 3.5 and 3.7]{FactorShoupGathen} for an upper bound on the expected running time of factorization). Moreover, for a given $x\in k$, the probability that the above steps go through without starting over, finding a point in $\calC(k)$, is at least $\frac{1}{4q^3}$: this is equivalent to saying that there are at least $\frac{\# k}{4q^3}$ values of $x$ that lift to a point in $\calC(k)$, or equivalently that the map $\calC(k) \to \A^1(k)$ given by the $x$-coordinate has at least $\frac{\# k}{4q^3}$ distinct images, which is true because $\# \calC(k) > \tfrac 12 \# k$, and the map $x\colon \calC \to \A^1$ has finite fibres and degree $2(q^3-q)$, since it is the composition of the degree-$2$ map $x\colon E \to \P^1$ and the projection $\calC \to E$ which has degree $q^3{-}q$ by Lemma \cref{lem:good_curves}.

Hence, the expected number of times each step is repeated is at most $4q^3$, and the expected running time of the above procedure is polynomial in $q\log(\# k)$.

\end{change}

\section{Descent 4-to-3}\label{sec:43}
In this section we finish the proof of Proposition \ref{firsthalfdescentprop}, started in Section \ref{sec:idea_descent}. Following the notation of Section \ref{sec:idea_descent} with $\paramdeg=4$, let $k$ be a finite extension of $\F_q$ of degree at least $80$, let $\p$ be a non-trap point on $E$ such that $[k(\p):k]=4$, and let $\sigma$ be a generator of $\Gal(k(\p)/k)$. Then, we look for a function $f \in k(E)$ and a matrix $\smt abcd \in \PGL(k)$ satisfying properties \ref{item1:degree}, \ref{item2:split}, \ref{item3:gamma}, \ref{item4:traps}: we describe a surface $\calC$ whose $k$-points give such pairs $(f,\smt abcd)$, we prove that there are many $k$-points on $\calC$, \changee{and that one such point can be computed in polynomial time in $q\log(\# k)$}.

\subsection{The definition of $\calC$}\label{subsec:def43}

Property \ref{item1:degree} requires that $f \in k(E)$ has at most $3$ poles: we look for $f$ of the form 
\begin{subeqn}\label{eq:43_def_f}
f=f_{\alpha,\beta,\S} := \frac{f_\S + \alpha}{f_{\widetilde{\S}}+\beta} \,,
\end{subeqn}
where $\alpha, \beta$ are elements of $k$, the points $\S, \widetilde{\S}$ lie in $E(k)\setminus \{O_E\}$ and $f_\S$ is the rational function defined in (\ref{eq:f_P}). We see directly from the definition that the function $f_{\alpha, \beta,\S}$ has at most three poles counted with multiplicity, namely ${-}\S$ and the zeroes of $f_{\widetilde{\S}}{+}\beta$.

Notice that (\ref{eq:43_def_f}) makes sense for any $\S \in E(\ol{\F_q})$ and $\alpha, \beta \in \ol{\F_q}$.

\changee{Let $\Pp \in E(\ol{\F_q})$ be the unique point such that $\phi(\Pp) = \p + P_0$.} 
 For the rest of the section we let $\alpha, \beta$ and $\S$ vary and we fix $\widetilde{\S}$ so that 
$f_{\p}(\widetilde{\S})$, $f_{\sigma \p}(\widetilde{\S})$, $ f_{\sigma^2 \p}(\widetilde{\S})$, $f_{\sigma^ 3 \p}(\widetilde{\S})$, $f_{\Pp}(\widetilde{\S})$, $f_{\sigma \Pp}(\widetilde{\S})$, $ f_{\sigma^2 \Pp}(\widetilde{\S})$, $f_{\sigma^ 3 \Pp}(\widetilde{\S})$ are pairwise distinct and \changee{all distinct from $f_{B}(\widetilde{\S})$ for all points $B$ such that $\phi(B) = B+P_0$.
There is at least one such point $\widetilde{\S}$ because $\# (E(k)\setminus\{O_E\})$ is larger than the number of points we are excluding, which, using (\ref{eq:zeroes_superuseful}), is at most $16(2q +8)$.}
% for each $P' \neq P'' \in E(\ol{\F_q})\setminus\{O_E\}$ there is at most one point $\widetilde{\S} \in (E(k)\setminus\{O_E\})$ such that $f_{P'}(\widetilde\S)=f_{P''}(\widetilde\S)$.
 We sometimes write $f$ for $f_{\alpha,\beta,\S}$.

By Remark \ref{rem:split_field}, for condition \ref{item2:split} to be true it is enough that $d^qc {-} ac^q \neq 0$, and that 
\begin{subeqn}\label{eq:43_item2}
(d^qc - ac^q)^{q+1}(z^q-z)^{q^2-q} = c^{q^2+1}(ad - bc)^q \left((z^{q^2}-z)/(z^q-z)\right)^{q+1}\,,
\end{subeqn}
for some $z\in k$.

Using the definition of $\Pp$ and that $h^\phi(\phi(P))=h(P)^q$ for all $h \in \ol{\F_q}(E)$ and $P \in E(\ol{\F_q})$, we have
\[
\begin{aligned}
-f^\phi(\sigma^i\p {+} P_0) = - f^\phi(\phi(\sigma^i\Pp)) = -f(\sigma^i\Pp)^q \,.
\end{aligned}
\]
In particular,  property \ref{item3:gamma} is equivalent to
\begin{subeqn}\label{eq:43_item3}
\smt abcd\cdot f_{\alpha,\beta, \S}(\sigma^i\p) = - f_{\alpha,\beta, \S}(\sigma^i\Pp)^q \quad \text{for } i =0,1,2,3\,.
\end{subeqn}
The above equation can be further manipulated.
Since cross-ratio is invariant under the action of $\PGL$ on $\P^1$, the above equation implies that either the cross-ratio of $f(\sigma^0\p), \ldots,f(\sigma^3\p)$
is equal to the cross ratio of
$f(\sigma^0\Pp), \ldots$, $f(\sigma^3\Pp)$, or that both cross-ratios are not defined. Conversely, supposing that these cross ratios are defined and equal, condition  (\ref{eq:43_item3}) is equivalent to the same condition, but only for $i=0,1,2$.
In other words, if $f(\sigma^i\p)$ are pairwise distinct and $f(\sigma^i\Pp)$ are pairwise distinct, 
Equation (\ref{eq:43_item3}) is equivalent to 
\begin{subeqn}\label{eq:43_cross_ratio}
\CR\big( f_{\alpha,\beta, \S}(\sigma^0\p), \ldots , f_{\alpha,\beta, \S}(\sigma^3\p )\big)  = \CR\big( f_{\alpha,\beta, \S}(\sigma^0\Pp)^q, \ldots , f_{\alpha,\beta, \S}(\sigma^3\Pp )^q\big)\,,
\end{subeqn}
together with 
\begin{subeqn}\label{eq:43_item3_revisited}
\smt abcd\cdot f_{\alpha,\beta, \S}(\sigma^i\p) = - f_{\alpha,\beta, \S}(\sigma^i\Pp)^q \quad \text{for } i =0,1,2\,.
\end{subeqn}

%It is easy to see that, 
Assuming Equation (\ref{eq:43_item3_revisited}) and the distinctness of $f(\sigma^i \p)$ and $f(\sigma^i\Pp)$,  condition \ref{item4:traps} is implied by 
\begin{subeqn}\label{eq:43_item4_main} 
\begin{aligned}
	& \text{for} \text{ all } \Bp \text{ such that } \phi(\Bp) = \Bp+P_0 \text{ we have }  \quad \S\neq {-}B\,,  \quad \beta {+} f_{\widetilde{\S}}(\Bp) \neq 0, 
	\\ & \,\,\,  \CR\big (f(\p), f(\sigma\p), f(\sigma^2\p), f(\Bp)\big)  \neq   \CR\big (f(\Pp)^q\!, f(\sigma\Pp)^q\!, f(\sigma^2\Pp)^q\!, f(\Bp)^q \big) .
\end{aligned}
\end{subeqn}
\begin{change}
	Indeed the first two conditions imply that $B$ is not a pole of $f = f_{\alpha, \beta,\tilde P}$; 
	moreover if $cf^{q+1}+df^q +af+b$ vanished on $B$, we would have
	\[
	\tfrac{af(B)+b}{cf(B)+d} = -f^q(B),
	\]
	i.e. the homography $\smt abcd\colon \P^1 \to \P^1$ would send send $f(B) \mapsto - f^q(B)$ and, since we also know that it sends $f(\sigma^i Q) \mapsto -f(\sigma^i R)^q$, the cross ratio of $f^q(Q), f^q(\sigma^1Q), f^q(\sigma^2Q), f^q(B)$ would be equal to the cross ratio of  $f^q(R), f^q(\sigma^1R), f^q(\sigma^2R), f^q(B)$, contradicting \eqref{eq:43_item4_main} . 
\end{change}
Finally we define $E':=E \setminus\{O_E,{-}\sigma^0 \p, {-} \sigma^0 \Pp,\ldots, {-}\sigma^3\p, {-} \sigma^3\Pp \}$   and $\calC$ to be the surface inside $\A^2\times E' \times\PGL \times  \A^1$ made of points $(\alpha, \beta, P, \smt abcd, z)$ that satisfy Equations (\ref{eq:43_cross_ratio}), (\ref{eq:43_item3_revisited}), (\ref{eq:43_item2}) and (\ref{eq:43_item4_main}), and such that 
$\beta + f_{\widetilde{\S}}(\sigma^i\p) \neq 0$, $\beta +  f_{\widetilde{\S}}(\sigma^i\Pp) \neq 0$, $d^qc-ac^q\neq0$, the $f(\sigma^i\p)$ are distinct and
the $f(\sigma^i\Pp)$ are distinct. 

The definition of $E'$ and the conditions $\beta + f_{\widetilde{\S}}(\sigma^i\p) \neq 0$, $\beta +  f_{\widetilde{\S}}(\sigma^i\Pp) \neq 0$,  ensure that  $f(\sigma^i\p)$ and $f(\sigma^i\Pp)$ are well defined.
Arguing as in subsection \ref{sec:3-2_defC}, we see that $\calC$ is defined over~$k$. By construction, for all $(\alpha, \beta, P, \smt abcd, z) \in \calC(k)$, the pair $(f_{\alpha, \beta,P}\smt abcd)$ satisfies \ref{item1:degree}, \ref{item2:split} and \ref{item3:gamma} and \ref{item4:traps}.

\subsection{Irreducibility of a projection of $\calC$}
Before studying the irreducible components of $\calC$, we study the closure in $\P^2 \times E$ of the projection of $\calC$ in $\A^2 \times E$. Let $\calB'\subset \A^2 {\times} E'$ be the variety whose points are the tuples $(\alpha, \beta, \S)$ such that 
\[
\begin{aligned} 
& f_{\alpha, \beta, \S}(\sigma^i\p) \text{ are pairwise distinct}\,, 
\quad 
f_{\alpha, \beta, \S}(\sigma^i\Pp) \text{ are pairwise distinct}, \\
& \qquad \qquad \qquad f_{\widetilde{\S}}(\sigma^i\p)+\beta \neq 0\,, \quad  f_{\widetilde{\S}}(\sigma^i\Pp) + \beta \neq 0\,, \\
& \CR\big( f_{\alpha,\beta, \S}(\sigma^0\p), \ldots , f_{\alpha,\beta, \S}(\sigma^3\p )\big)  = \CR\big( f_{\alpha,\beta, \S}(\sigma^0\Pp)^q, \ldots , f_{\alpha,\beta, \S}(\sigma^3\Pp )^q\big)\,,
\end{aligned}
\]
and let $\calB$ be the closure of $\calB'$ inside $\P^2 \times E$.
The Galois group of $\ol k /k$ permutes the above inequalities and leaves the last equality stable, hence $\calB'$ and $\calB$ are defined over $k$. 
 
\begin{change}
Since the variety $\calB'$ is defined by one equality of rational functions together with  some  non-equalities inside a dimension$~3$ variety, we expect $\calB'$, hence $\calB$, to be a surface. We argue more in detail. In the open $U\subset \A^2\times E'$ where the non-equalities hold, $\calB'$ is defined by the 
vanishing of a regular function $F$ (the non-equalities imply that the two cross-ratios are regular on $U$).
%(indeed condition $f_{\widetilde{\S}}(\sigma^i\p)+\beta , f_{\widetilde{\S}}(\sigma^i\Pp) + \beta \neq 0$ assure that $f_{\alpha, \beta, \S}(\sigma^i\p)$, $f_{\alpha, \beta, \S}(\sigma^i\Pp) $ are regular, and the distinctness imply that the crossratios are regular), 
Then $\calB'$  is a surface if and only if $F$ is not invertible nor identically zero in $k[U]$. It is enough to see this in a localization of $ k[U]$, e.g. when we invert all non-zero elements of $k[E][\beta]$. 
In this setting, after clearing denominators which are invertible on $U$, the function $F$ becomes a polynomial in $\alpha$ and the non-equalities can be written in the form $L(\alpha)\neq 0$ with $L$ being either linear polynomial 
in $k(E)(\beta)[\alpha]$ or an element of $k(E)(\beta)^\times$. 
We obtain our claim from the fact that if $L$ is a ``constant'' polynomial it does not divide all coefficients of $F$ and if $L$ is linear, then it is coprime to $F$  (for example, modulo $f_{\alpha, \beta, \S}(\sigma\Pp){-}f_{\alpha, \beta, \S}(\Pp)$, the function $F$ becomes the product of $ f_{\alpha,\beta, \S}(\sigma^2\p){-} f_{\alpha,\beta, \S}(\p)$, $ f_{\alpha,\beta, \S}(\sigma^3\p){-} f_{\alpha,\beta, \S}(\sigma\p)$, $ (f_{\alpha,\beta, \S}(\sigma\Pp){-} f_{\alpha,\beta, \S}(\Pp))^q$ and  $(f_{\alpha,\beta, \S}(\sigma^3\Pp){-} f_{\alpha,\beta, \S}(\sigma^2\Pp))^q$ which are powers of linear polynomials in $\alpha$ and are not a multiple of our chosen modulus as it can be checked by looking at the coefficients of $\alpha^0$ and $\alpha^1$).

\end{change}

%Since the action of $\PGL$ on $\P^1$ is triply transitive, the projection $\A^2 \times E \times \PGL \times \A^1 \to \A^2\times E$ gives a dominant morphism $\calC \to \calB$ (this is the same argument used in the proof of Lemma \ref{lem:good_curves} to show that $\calB_0 \to \calB$ is dominant). Since $\calC$ is defined over~$k$, the variety  $\calB$ is also defined over~$k$.
In the remainder of the subsection we prove that for all but a few choices of $\S \in E(k)$ the curve $\calB_\S := \calB \cap (\P^2{\times} \{\S\})$ is reduced and geometrically irreducible, which implies the same for $\calB$. 

We first write an equation for $\calB_\S$ in $\P^2$. 

Using the definition of $f_{\alpha,\beta,P}$ we get 
\[
\begin{aligned}
&f_{\alpha, \beta, P}(\sigma^i\p){-}f_{\alpha, \beta,P}(\sigma^j\p)  =  \frac{L_{i,j}(\alpha, \beta,1)}{\big( l_i{+}\beta\big) \big( l_j{+}\beta\big)}, \\
&f_{\alpha, \beta, P}(\sigma^i\Pp){-}f_{\alpha, \beta,P}(\sigma^j\Pp)  =  \frac{R_{i,j}(\alpha, \beta,1)}{\big( r_i{+}\beta\big) \big( r_j{+}\beta\big)}, 
\end{aligned}
\]
where $l_i := f_{\widetilde{\S}}(\sigma^i\p)$, $r_i := f_{\widetilde{\S}}(\sigma^i\Pp)$ and $L_{i,j},R_{i,j}$ are the linear polynomials 
\begin{subeqn}\label{eq:def_Lij_Rij}
\begin{aligned} 
	L_{i,j} %(\alpha,\beta,\gamma) = L_{i,j}(\alpha,\beta,\gamma, \S) 
	&:=  \big( l_j {-} l_i \big) \alpha + \big( f_{\sigma^i\p}(\S) {-} f_{\sigma^j\p}(\S)\big)\beta + \big( f_{\sigma^i\p}(\S) l_j {-} f_{\sigma^j\p}(\S) l_i \big) \gamma , \\
	R_{i,j} %(\alpha, \beta,\gamma) = R_{i,j}(\alpha,\beta,\gamma, \S) 
	&:=  \big( r_j {-} r_i \big) \alpha + \big( f_{\sigma^i\Pp}(\S) {-} f_{\sigma^j\Pp}(\S)\big)\beta + \big( f_{\sigma^i\Pp}(\S) r_j  {-} f_{\sigma^j\Pp}(\S) r_i \big) \gamma \,. 
\end{aligned}
\end{subeqn}
Rewriting Equation (\ref{eq:43_cross_ratio}) with this notation, we see that $\calB_\S$ is the vanishing locus of the homogeneous polynomial
\begin{subeqn}\label{eq:eq_B}
M(\alpha, \beta, \gamma):= 
L_{0,2}L_{1,3}R_{0,1}^q R_{2,3}^q - L_{0,1}L_{2,3}R_{0,2}^q R_{1,3}^q \quad \in \ol{\F_q}[\alpha, \beta, \gamma]\,.
\end{subeqn}
\begin{comment}
\begin{subeqn} \label{eq:CR_s}
\begin{aligned}
\CR \left( f_{\alpha,\beta}(\p), f_{\alpha,\beta}(\sigma\p), f_{\alpha,\beta}(\sigma^2\p), f_{\alpha,\beta}(\sigma^3\p) \right) & = \frac{L_{0,2}(\alpha, \beta)L_{1,3}(\alpha, \beta)}{L_{0,1}(\alpha, \beta)L_{2,3}(\alpha, \beta)} \,, \\
\CR \left( f_{\alpha,\beta}^q(\Pp), f_{\alpha,\beta}^q(\sigma\Pp), f_{\alpha,\beta}^q(\sigma^2\Pp), f_{\alpha,\beta}^q(\sigma^3\Pp) \right) & = \frac{R_{0,2}(\alpha, \beta)R_{1,3}(\alpha, \beta)}{R_{0,1}(\alpha, \beta)R_{2,3}(\alpha, \beta)} \,.
\end{aligned}
\end{subeqn}
\end{comment}
Notice that for each pair $(i,j) \in \{(0,1), (0,2), (1,3), (2,3)\}$ the varieties $\{L_{i,j}=0\}$ and $\{R_{i,j}=0\}$ are lines inside $\P^2$ and that it is easy to determine the intersections $\calB_\S\cap \{L_{i,j}{=}0\}$ and $\calB_\S\cap \{R_{i,j}{=}0\}$: such divisors are linear combinations of the points $X_k$'s defined in Figure \ref{figure} as intersections between lines in $\P^2$. The following proposition says that the points $X_k$ are well-defined and distinct.
\begin{figure}\caption{The intersections $X_i$ of the curve $\calB_\S$ with certain lines $L_{i,j}, R_{i,j}$.} \label{figure}
\includegraphics[width=\linewidth]{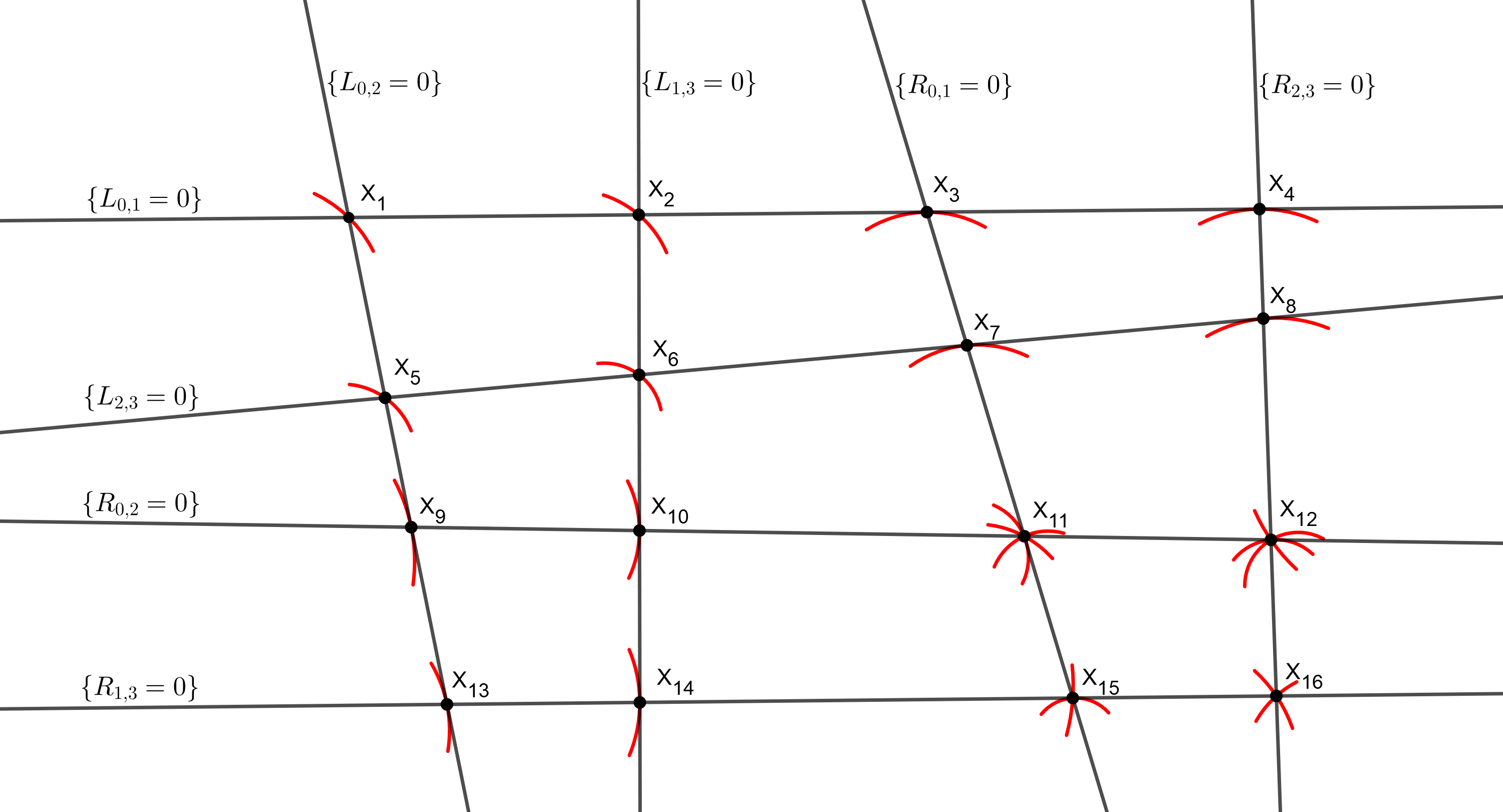}
\end{figure}
\begin{subClaim}\label{hope_distinct_points}
Consider the lines $\{L_{i,j}=0\}$ and $\{R_{i,j}=0\}$ and the points $X_i$ in Figure \ref{figure}. 
For all but at most $450$ choices of $\S \in E(k)$, those lines 
are distinct and the points $X_i$ are distinct.
\end{subClaim}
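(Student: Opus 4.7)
The plan is to view each failure mode---two lines coinciding, or two points $X_i$ coinciding---as the vanishing of a regular function on $E$, to show each such function is not identically zero, and then to bound the sum of the degrees of the resulting divisors by $450$. The starting observation is that each of $f_{\sigma^i\p}(\S)$ and $f_{\sigma^i\Pp}(\S)$ is a rational function on $E$ of degree $2$ by (\ref{eq:zeroes_superuseful}), so each of the eight coefficient vectors of the $L_{i,j}, R_{i,j}$ consists of rational functions on $E$ of controlled degree. The genericity of $\widetilde{\S}$ fixed at the outset of Section~\ref{sec:43} forces the $l_i$ and the $r_i$ to be pairwise distinct, so the coefficient of $\alpha$ in each $L_{i,j}, R_{i,j}$ is nonzero, and each expression genuinely defines a projective line for $\S$ in a suitable affine open of~$E$.

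For each unordered pair among the eight lines, the coincidence condition is the vanishing of a $2\times 2$ minor of the corresponding $2\times 3$ coefficient matrix; for each pair $X_i, X_j$ of the Figure~\ref{figure} points, the condition $X_i=X_j$ is a similar minor vanishing, since each $X_i$ can be written via Cramer as a low-degree rational function of~$\S$. To show that none of these minors is identically zero, I would argue by contradiction: an identity $L_{i,j}(\S) \equiv L_{k,l}(\S)$ as $\S$ varies would force the two rational functions $f_{\sigma^i\p}-f_{\sigma^j\p}$ and $f_{\sigma^k\p}-f_{\sigma^l\p}$ on~$E$ to be proportional, which by~(\ref{eq:zeroes_superuseful}) forces equality of their divisors, hence a coincidence such as $\sigma^i\p+\sigma^j\p=\sigma^k\p+\sigma^l\p$; translating via $\sigma=\phi^{[k:\F_q]}$ exactly as in the computation~(\ref{eq:use_traps}) converts this into a polynomial-in-$\phi$ identity annihilating $\p$ modulo~$P_0$ that matches one of the clauses defining a trap, contradicting the non-trap hypothesis on~$\p$. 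The mixed $L$-$R$ and $R$-$R$ pair cases are handled identically, using $\phi(\Pp)=\p+P_0$ to convert relations among the $\sigma^a\Pp$ into Frobenius identities on~$\p$, and the $X_i=X_j$ cases reduce similarly via Cramer. The hypothesis $q>2$ is invoked at this step to rule out characteristic-$2$ collapses of the arguments, where e.g.\ a sign identity trivialises.

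To finish, each of the finitely many coincidence conditions cuts out a divisor on $E$ of degree bounded by a small constant (a product of degrees of the coefficient rational functions), so summing over the $\binom{8}{2}=28$ pairs of lines and the pairs of points $X_i$ yields the numerical bound $450$ after a careful degree accounting. The main obstacle is the non-triviality step: one must perform case analysis to verify that \emph{every} putative line- or point-coincidence translates into one of the clauses in the trap definition, so that the non-trap hypothesis on $\p$ is actually used, and one must be careful in small characteristic, which explains the role of the standing assumption $q>2$.
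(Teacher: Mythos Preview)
Your overall plan---encode each coincidence as the vanishing of a rational function on $E$, prove nonvanishing, then sum degrees---is exactly the paper's strategy, and your treatment of the line-coincidence cases is essentially the same as the paper's (the paper simply observes that the relevant $2\times 2$ minor has visible poles, hence is nonzero, which is a slightly more direct variant of your divisor-equality argument).

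The gap is in your mechanism for the point-coincidence cases. You assert that each $X_i=X_j$ condition ``translates into one of the clauses in the trap definition,'' but this is not what the paper does, and it is not clear that such a reduction is available. The hardest case in the paper is $X_3=X_7$, i.e.\ the concurrency of $\{L_{0,1}=0\}$, $\{L_{2,3}=0\}$, $\{R_{0,1}=0\}$: assuming the $3\times3$ coefficient determinant vanishes identically, the paper uses Cramer's rule to produce rational functions $A,B\in\ol{\F_q}(E)$ satisfying a linear system, computes the divisors of the relevant $2\times2$ minors $N_{i,j}$, deduces that $B$ is nonconstant with exactly two poles, and finally concludes that $A$ would have a \emph{single} simple pole---impossible on an elliptic curve. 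No trap clause is invoked. Along the way the paper does extract the relations $\p+\sigma\p=\sigma^2\p+\sigma^3\p=\Pp+\sigma\Pp$, but it does not (and apparently cannot easily) convert these into one of the six trap identities in $\phi$; instead it feeds them back into the divisor analysis. So your proposal either needs to exhibit, for this case, an explicit reduction to a trap clause---which would be new---or to replace the trap-reduction mechanism by the paper's ``no function on $E$ has exactly one pole'' argument. As written, the phrase ``reduce similarly via Cramer'' hides precisely the nontrivial step.
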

\begin{proof}
First we see that the points $\sigma^0\p, \sigma^0\Pp,\ldots$,  $\sigma^3\p, \sigma^4\Pp$ are pairwise distinct: clearly $\sigma^0\p, \ldots$, $ \sigma^3\p$ are distinct and  $\sigma^0\Pp, \ldots,  \sigma^3\Pp$ are distinct and if we had $\sigma^i \p =  \sigma^j\Pp$, then, for $l:=[k:\F_q]$ and $m:=i{-}j$, we would have
\[\begin{aligned}
\p + P_0 = \phi(\Pp) = &\phi(\sigma^{{i-j}} \p) = \phi(\phi^{l(i-j)}\p) = \phi^{lm +1}(\p) \\
& \implies \quad \phi^4(\p) =\phi^{4(lm+1)}(\p) = \p + 4P_0\,,
\end{aligned}
\]
which is absurd because $\Pp$ is not a trap and consequently $\phi^4(\p) \neq \p +4P_0$.

If the lines $\{L_{0,2}=0\}$ and $\{L_{1,3}=0\}$ were equal, then the matrix of their coefficients 
\[ 
n(\S) = \left(\begin{matrix}
l_2 {-} l_0  & (f_{\p}{-}f_{\sigma^2 \p})(\S) &  (l_2f_{\p}{-}l_0f_{\sigma^2 \p})(\S) \\
l_3 {-} l_1  & (f_{\sigma \p}{-}f_{\sigma^3 \p})(\S) &  (l_3f_{\sigma \p}{-}l_1f_{\sigma^3 \p})(\S)
\end{matrix} \right)
\]
would have rank $1$, which, computing the deteminant of a submatrix of $n$, would imply $P$ to be a zero of the rational  function $$(l_0-l_2) (f_{\sigma^3 \p}{-}f_{\sigma \p}) - (l_1{-}l_3)(f_{\sigma^2 \p}{-}f_{\p}),$$ that has five poles counted with multiplicity, since the points $\p, \ldots, \sigma^3\p$ are distinct and since $l_i\neq l_j$ by the choice of $\widetilde{\S}$.
Hence for all but at most $5$ choices of $\S \in E(k)$, the matrix $n(\S)$ has rank $2$ and consequently the lines $\{L_{0,2}=0\}$ and $\{L_{1,3}=0\}$ are distinct.

Similar arguments prove that, for any other pair of lines $\Lambda,\Lambda'$ in Figure \ref{figure}, $\Lambda \neq \Lambda'$  for all but at most five choices of $\S$. \begin{change} This implies that, up to excluding at most $140$ choices of $P$, we can suppose that the eight lines in Figure \ref{figure} are pairwise distinct. 
Under this assumption, we now show that, for all $i \neq j$, we have $X_i \neq X_j$, for all but $6$ choices of $\S \in E(k)$. 
The number of total cases ``$i \neq j$'' to be treated is at most $48 = {4 \choose 2}\cdot 4 \cdot 2$ since if two points collide, then three lines in Figure \ref{figure} (two of which vertical/horizontal and one horizontal/vertical) are concurrent. This implies that up to excluding $140 +6\cdot 48 < 450$ points, then all points $X_i$ are distinct. We only treat a couple of the $48$ cases $X_i \neq X_j$, but the same ideas can be applied in the remaining cases.
\end{change}

If $X_{9} = X_{12}$, then the lines $\{R_{0,2}=0\}$, $\{R_{2,3}=0\}$ and $\{L_{0,2}=0\}$ are concurrent, hence the following matrix, that contains their coefficients, is not invertible 
\[
M = M(\S) = \left(\begin{matrix}
r_2 {-} r_0  & (f_{\Pp}{-}f_{\sigma^2 \Pp})(\S) &  (r_2f_{\Pp}{-}r_0f_{\sigma^2 \Pp})(\S) \\
r_3 {-} r_2  & (f_{\sigma^2\Pp}{-}f_{\sigma^3 \Pp})(\S) &  (r_3f_{\sigma^2\Pp}{-}r_2f_{\sigma^3 \Pp})(\S) \\
l_2 {-} l_0  & (f_{\p}{-}f_{\sigma^2 \p})(\S) &  (l_2f_{\p}{-}l_0f_{\sigma^2 \p})(\S) \\
\end{matrix}\right)\,,
\]  
implying that $\S$ is a zero of the rational function $\det(M)$. Writing out $\det(M)$, and using that $\sigma^0\p, \sigma^0\Pp,\ldots$,  $\sigma^3\p, \sigma^4\Pp$ are pairwise distinct, we see that there is a rational function $g$, regular in ${-}\sigma^2 \Pp$, such that 
\[
\det(M) = (l_2 {-} l_0)(r_0 {-} r_3) f_{\sigma^2\Pp}^2 + f_{\sigma^2 \Pp}\,g\,.
\]
Since $l_0 \neq l_2$ and $r_0 \neq r_3$, the rational function $\det(M)$ has a pole of order $2$ in ${-}\sigma^2\Pp$ and in particular $\det(M)$ is a non-zero rational function with at most $6$ poles counted with multiplicity. Hence $\det(M)$ has at most $6$ zeroes, implying that $X_9 \neq X_{12}$, for all but $6$ choices of $\S \in E(k)$.

If $X_3 = X_7$, then the lines $\{L_{0,1}=0\}$, $\{L_{2,3}=0\}$ and $\{R_{0,1}=0\}$ are concurrent, hence the following matrix, that contains the coefficients of $L_{0,1}$, $L_{2,3}$ and $R_{0,1}$, is not invertible
\[
N = N(\S) = \left(\begin{matrix}
l_1 {-} l_0  & (f_{\p}{-}f_{\sigma \p})(\S) &  (l_1f_{\p}{-}l_0f_{\sigma \p})(\S) \\
l_3 {-} l_2  & (f_{\sigma^2\p}{-}f_{\sigma^3 \p})(\S) &  (l_3f_{\sigma^2\p}{-}l_2f_{\sigma^3 \p})(\S) \\
r_1 {-} r_0  & (f_{\Pp}{-}f_{\sigma \Pp})(\S) &  (r_1f_{\Pp}{-}r_0f_{\sigma \Pp})(\S) \\
\end{matrix}\right)\,.
\]  
As before, in order to prove that $X_3 \neq X_4$ for all but at most $6$ choices of $\S \in E(k)\setminus\{O_E\}$ it is enough proving that $\det(N(\S))$, considered as a rational function of $\S$, is not identically zero. We suppose by contradiction that $\det(N)$ is identically zero. 
Denoting  $N_{i,j}$ the $(i,j)$-minor of $N(\S)$, it easy to see the rational functions $N_{i,3}$ are not null. Consequently, the only way for $\det(N)$ to be zero is that the third column $N$ is linearly dependent from the first two.
Hence, there are rational functions $A,B \in \ol{\F_q}(E)$ such that
\begin{subeqn}\label{eq:linear_system_Hope}
	\begin{cases}
		\big(l_1 {-} l_0\big)  \cdot A +  \big(f_{\p}{-}f_{\sigma \p}\big) \cdot 	B = l_1f_{\p}{-}l_0f_{\sigma \p} \\
		\big(l_3 {-} l_2\big)  \cdot A +  \big(f_{\sigma^2\p}{-}f_{\sigma^3 \p} \big) \cdot 	B = l_3f_{\sigma^2\p}{-}l_2f_{\sigma^3 \p} \\
		\big(r_1 {-} r_0\big)  \cdot A +  \big(f_{\Pp}{-}f_{\sigma \Pp}\big) \cdot 	B = r_1f_{\Pp}{-}r_0f_{\sigma \Pp} \\
	\end{cases}
\end{subeqn}
and, using Cramer's rule, we have
\[
B = \frac{N_{1,2}}{N_{1,3}} = \frac{N_{2,2}}{N_{2,3}}= \frac{N_{3,2}}{N_{3,3}}\,.
\]
We easily compute the poles of the rational functions $N_{i,j}$ and check that they all vanish in $\widetilde{\S}$ and $O_E$, (for $O_E$ it is enough noticing that $(f_P {-} \tfrac yx)(O_E)=0$ for all $P \in E(\ol{\F_q}) \setminus\{O_E\}$). These computations give 
\[
\begin{aligned}
\divi(N_{1,j}) &= D_{1,j} + \widetilde{\S} + O_E - ({-}\Pp) - ({-}\sigma\Pp) - ({-}\sigma^2\p) - ({-}\sigma^3\p) \,,\\
\divi(N_{2,k})&= D_{2,j} + \widetilde{\S} + O_E - ({-}\p) - ({-}\sigma\p) - ({-}\Pp) - ({-}\sigma\Pp) \,,\\
\divi(N_{3,j})&= D_{3,j} + \widetilde{\S} + O_E - ({-}\p) - ({-}\sigma\p) - ({-}\sigma^2\p) - ({-}\sigma^3\p) \,,\\ 
\end{aligned}
\]
for certain positive divisors $D_{i,j}$ of degree $2$. Consequently
\[  
\divi(B) = D_{1,2} - D_{1,3} = D_{2,2} - D_{2,3} =D_{3,2} - D_{3,3}\,.
\]
Since the functions $f_{\p}$, $f_{\sigma \p}$, $f_{\sigma^2 \p}$ and $f_{\sigma^3 \p}$ are $\ol{\F_q}$-linearly independent, then $N_{1,2}$ and $N_{1,3}$ are not $\ol{\F_q}$-multiples and $B$ is not constant. Since every non-constant rational function on $E$ has at least two poles, we deduce that $D_{1,3} = D_{2,3} = D_{3,3}$ is the divisor of poles of $B$. In particular, in the group $E(\ol{\F_q})$, the sum of the poles of $N_{1,3}$ is equal to the sum of the poles of $N_{2,3}$ and to the sum of the poles of $N_{3,3}$. Writing it out, we get the following equalities in the group $E(\ol{\F_q})$
\[
\p + \sigma \p = \sigma^2\p + \sigma^3 \p = \Pp + \sigma \Pp\,.
\]
Hence, using (\ref{eq:zeroes_superuseful}),  ${-}\p{-}\sigma\p$ is a zero of $N_{3,3}$ and consequently the two poles of $B$ are ${-}\p{-}\sigma\p$ and ${-}\p{-}\sigma\p{-}\widetilde{\S}$. By looking at (\ref{eq:linear_system_Hope}), we deduce that $A$ has exactly one simple pole, namely ${-}\p{-}\sigma\p{-}\widetilde{\S}$, which is absurd. Hence $\det(N(\S))$ is not identically zero.
\end{proof}

We now study the geometrically irreducible components of $\calB_\S$ for the points $P$ such that the conclusions of Claim \ref{hope_distinct_points} hold (there are such $P$'s because $450$ is way smaller than $\#E(k)$). 
Equation (\ref{eq:eq_B}), that defines $\calB_\S$, gives the following divisor-theoretic intersection 
\begin{subeqn}\label{eq:inters_B_R_L_31}
\calB_\S \cap \{L_{0,2} = 0 \} = X_1 + X_5 + qX_9 + qX_{13}\,.
\end{subeqn}
Since $X_i$'s are distinct, the point $X_1$ has multiplicity $1$ in the above intersection and consequently it is a smooth point of $\calB_\S$.  With analogous arguments we see that all the points $X_i$, except the ones of shape $\{ R_{i,j} =0\} \cap \{ R_{l,m}=0 \}$, are smooth. This is used in the following Claim.

\begin{subClaim}\label{hope_no_conics}
Assuming the conclusions of Claim \ref{hope_distinct_points}, the curve $\calB_\S$ does not contain any conic defined over~$k$.
\end{subClaim}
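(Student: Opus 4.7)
I will argue by contradiction: assume $C\subseteq \calB_\S$ is a conic defined over~$k$, and show that $C$ would have to contain the line $\{L_{0,2}=0\}$, contradicting the easily-checked fact that $L_{0,2}$ does not divide the polynomial $M$ from~(\ref{eq:eq_B}). Indeed, if $L_{0,2}$ divided
\[
M = L_{0,2}L_{1,3}R_{0,1}^q R_{2,3}^q - L_{0,1}L_{2,3}R_{0,2}^q R_{1,3}^q,
\]
then it would have to divide the second summand, which is a product of four linear forms distinct from $L_{0,2}$ by Claim~\ref{hope_distinct_points}; the same reasoning shows that none of the eight linear forms appearing in~$M$ divides~$M$.

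Now fix $\ell = \{L_{0,2}=0\}$. Since $\ell\not\subseteq C$, the divisor $C\cdot \ell$ has degree $2$ and is supported in $\calB_\S\cdot \ell = X_1+X_5+qX_9+qX_{13}$ from~(\ref{eq:inters_B_R_L_31}). In local coordinates $u=L_{0,2}, v=R_{0,2}$ near the multiplicity-$q$ point $X_9$, one has $M = Au - Bv^q$ with $A, B$ non-zero, so $\calB_\S$ is smooth at $X_9$ with tangent~$\ell$; any local component of $\calB_\S$ through $X_9$ must then coincide with this entire smooth branch, forcing either $(C\cdot \ell)_{X_9}=q>2$ or $L_{0,2}\subseteq C$, both impossible. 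Hence $C\cdot \ell$ is supported on $X_1, X_5$; moreover, a direct computation of $\nabla M$ shows the tangent of $\calB_\S$ at $X_1$ has the form $\alpha L_{0,2} - \beta L_{0,1}$ with $\alpha,\beta\neq 0$ (by distinctness of the~$X_i$), hence $\neq \ell$, so $C$ passes through both $X_1, X_5$ transversally to~$\ell$. Applying the same reasoning to the four $L$-lines of~$M$, $C$ contains the four $L\cap L$ points $L_{0,2}\cap L_{0,1}$, $L_{0,2}\cap L_{2,3}$, $L_{1,3}\cap L_{0,1}$, $L_{1,3}\cap L_{2,3}$.

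Since $C$ is defined over~$k$, it is fixed by $\Gal(k(\p)/k)$, which is cyclic of order~$4$ and acts on the six lines $L_{i,j}$ by shifting indices modulo~$4$: a generator $\sigma$ fixes the pair $\{L_{0,2}, L_{1,3}\}$ and cyclically permutes $L_{0,1}\to L_{1,2}\to L_{2,3}\to L_{0,3}\to L_{0,1}$. Applying $\sigma$ to the four $L\cap L$ points above yields four further points
$L_{1,3}\cap L_{1,2}$, $L_{1,3}\cap L_{0,3}$, $L_{0,2}\cap L_{1,2}$, $L_{0,2}\cap L_{0,3}$
that must also lie on~$C$. Of these eight points, four lie on~$L_{0,2}$: its intersections with the four lines $L_{0,1}, L_{2,3}, L_{1,2}, L_{0,3}$. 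Under a mild extension of Claim~\ref{hope_distinct_points} guaranteeing distinctness of the six lines $L_{i,j}$ and of their intersection points on $L_{0,2}$ (established by the same ``rational function with few zeros'' argument used in Claim~\ref{hope_distinct_points}, absorbed into the bound~$450$), these four points on $L_{0,2}$ are pairwise distinct. A conic meeting a line in four distinct points must contain that line, so $L_{0,2}\subseteq C$, contradicting the first paragraph.

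The main obstacle I expect is the rigorous exclusion of $X_9, X_{13}$ from~$C$ in the second paragraph; this argument relies on local smoothness of $\calB_\S$ at these points together with the non-divisibility of $M$ by~$L_{0,2}$, and it applies uniformly to smooth and reducible conics (in the reducible case $C=\ell_1\cup\ell_2$, any $\ell_i$ through $X_9$ would be tangent to $\calB_\S$ there, forcing $\ell_i=L_{0,2}$, again excluded). A secondary technicality is the mild extension of Claim~\ref{hope_distinct_points} required in the third paragraph.
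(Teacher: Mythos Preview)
Your argument has a fatal gap in the last step: the four ``extra'' points you obtain from the Galois action are not new at all, so you never get more than two points of $C$ on the line $\{L_{0,2}=0\}$.

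The point is that the six lines $L_{i,j}$ are not in general position; they satisfy a built-in incidence relation. Concretely, $L_{i,j}(\alpha,\beta,1)=0$ says exactly that $(-\beta,-\alpha)$ lies on the line through the two points $(l_i,f_{\sigma^i\p}(\S))$ and $(l_j,f_{\sigma^j\p}(\S))$ in an auxiliary plane. Hence the six lines $\{L_{i,j}=0\}$ are (up to a fixed affine transformation) the six lines determined by four points $P_0,P_1,P_2,P_3$, and in particular the three lines $L_{0,1},L_{0,2},L_{0,3}$ are concurrent (at $P_0$), as are $L_{0,1},L_{1,2},L_{1,3}$ (at $P_1$), etc. It follows that $X_1=L_{0,2}\cap L_{0,1}$ equals $L_{0,2}\cap L_{0,3}$, and $X_5=L_{0,2}\cap L_{2,3}$ equals $L_{0,2}\cap L_{1,2}$. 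The paper actually records this: it notes that $\{L_{1,2}=0\}$ is the line through $X_2,X_5$ and $\{L_{3,0}=0\}$ is the line through $X_1,X_6$. Thus the four points you list on $L_{0,2}$ collapse to $\{X_1,X_5\}$. In fact the set $\{X_1,X_2,X_5,X_6\}$ corresponds to $\{P_0,P_1,P_2,P_3\}$ and is already a single $\sigma$-orbit, so applying the Galois action produces no additional points whatsoever. No ``mild extension'' of Claim~\ref{hope_distinct_points} can help: these coincidences are structural, valid for every $\S$.

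The paper's proof uses the Galois action differently. Having placed $C$ through $X_1,X_2,X_5,X_6$, it observes that these four points (being the diagonal points of a complete quadrilateral) are in general position, so the conics through them form a pencil with basis $L_{0,1}L_{2,3}$ and $L_{0,2}L_{1,3}$; write $F=\lambda_0 L_{0,1}L_{2,3}+\lambda_1 L_{0,2}L_{1,3}$. Applying $\sigma$ to the \emph{coefficients} of $F$ (not to the points) gives $\sigma F=\sigma(\lambda_0)L_{1,2}L_{3,0}+\sigma(\lambda_1)L_{0,2}L_{1,3}$, and comparing forces $\lambda_0=0$, whence $C=\{L_{0,2}L_{1,3}=0\}$, which is not contained in $\calB_\S$. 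Your exclusion of $X_9,X_{13}$ is fine (and matches the paper), but after that you need this pencil argument rather than a point count on a line.
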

\begin{proof}
Suppose there exists such a conic $\mathcal Q$. Since $X_9$ is a smooth point of $\calB_\S$, if $X_9\in \mathcal Q$, then $\mathcal Q$ is the only component of $\calB_\S$ passing through $X_9$, hence $X_9$ appears in $\calB_\S \cap \{ L_{0,2}=0 \}$ with multiplicity at most $2<q$, contradicting Equation (\ref{eq:inters_B_R_L_31}). Hence $\mathcal Q$ does not contain $X_9$ nor, by a similar argument, $X_{13}$.
This, together with Equation (\ref{eq:inters_B_R_L_31}), implies that $X_1$ and $X_5$ belong to $\mathcal Q$. Analogously $X_2$ and $X_6$ belong to $\mathcal Q$.

We notice that $X_1,X_2,X_5,X_6$ are in general position (by Claim \ref{hope_distinct_points}) and that we know two conics passing through them, namely $\{L_{0,1}L_{2,3} =0\}$ and $\{L_{0,2}L_{1,3} = 0\}$. Hence, if we choose a homogenous quadratic polynomial  $F \in k[\alpha, \beta, \gamma]$ defining $\mathcal Q$, there are $\lambda_0, \lambda_1 \in \ol{\F_q}$ such that
\[
F = \lambda_0 L_{0,1}L_{2,3} + \lambda_1 L_{0,2}L_{1,3}\,.
\]
Our claim now follows by extending $\sigma$ to an element in $\Gal(\ol{\F_q}/k)$ and looking at its action on the above equation.
For each $i,j \in \{0,1,2,3\}$ we have $\sigma L_{i,j} = L_{i+1,j+1} = {-}L_{j+1,i+1}$, considering the indices modulo $4$, hence
\begin{subeqn}\label{eq:bohboh}
\lambda_0 L_{0,1}L_{2,3} + \lambda_1 L_{0,2}L_{1,3} = F = \sigma F = %\sigma( \lambda_0 L_{0,1}L_{2,3} + \lambda_1 L_{0,2}L_{1,3}) = 
\sigma(\lambda_0) L_{2,3}L_{3,0} + \sigma(\lambda_1)L_{0,2}L_{1,3}\,.
\end{subeqn}
\begin{change}
We notice that three lines of the form $\{L_{a,b}=0\}$, $\{L_{a,c}=0\}$ $\{L_{a,d}=0\}$ always have common point since the $3\times 3$ matrix containing their coefficients has determinant zero. When $a=1$ we deduce that  $\{L_{1,2}=0\}$ passes through the intersection $X_2 = \{L_{1,3}=0 \}\cap \{L_{0,1}=0\}$, and when $a=2$ we deduce that also $X_5$ belongs to $\{L_{1,2}=0\}$. In other words $\{L_{1,2}=0\}$ is the line through $X_2,X_5$. Analogously  $\{L_{3,0}=0\}$ is the line through $X_1,X_6$.
\end{change}

In particular, the polynomials $L_{i,j}$ in \eqref{eq:bohboh} are pairwise coprime, which implies $\lambda_0 = \sigma(\lambda_0) = 0$.  Hence, $\mathcal Q = \{F=0\} =  \{L_{0,2}L_{1,3} = 0 \}$, which is absurd because $\{L_{0,2}L_{1,3} = 0 \}$ is not contained in $\calB_\S$.
\end{proof}

Claim \ref{hope_no_conics} also implies that $\calB_\S$ does not contain a line of $\P^2$. Suppose that $\Lambda$ is a line contained in $\calB_\S$.  Neither $X_9$ nor $X_{13}$ are contained in $\Lambda$ since they are smooth points of $\calB_\S$ and, by Equation \eqref{eq:inters_B_R_L_31}, the unique components of $\calB_\S$ passing through them must have degree at least $q$ inside $\P^2$. Hence $\Lambda \cap \{L_{0,2}=0 \} \in \{X_1, X_5\}$ and consequently  
\begin{subeqn}\label{eq:lines_conj_inters}
(\Lambda \cup \sigma^2 \Lambda) \cap \{L_{0,2}=0 \}  = X_1 + X_5\,.
\end{subeqn}
This implies that $\sigma^2\Lambda \neq \Lambda$ and that $\sigma^2\Lambda$ and $\Lambda$ are all the $\Gal(\ol{\F_q}/k)$-conjugates of $\Lambda$: if $\Lambda$ had another conjugate $\Lambda'$, then,  since $\calB_\S$ is defined over~$k$, also $\Lambda'$ would be a component of $\calB_\S$
and, by the same argument as before, $\Lambda' \cap \{L_{0,2}=0 \} = X'  \in \{X_1, X_5\}$, which, together with Equation (\ref{eq:lines_conj_inters}), implies that two or more components of $\calB_\S$ pass through $X'$, contradicting the smoothness of $X_1$ and $X_5$. We deduce that $\Lambda {\cup} \sigma^2\Lambda$ is a conic defined over~$k$ and contained in $\calB_\S$, contradicting Claim \ref{hope_no_conics}.

By a similar argument, no conic $\mathcal Q$ is a component of $\calB_\S$: if this happens, since conics have degree $2<q$ in $\P^2$, then $X_9,X_{13}$ do not belong to any of the $\Gal(\ol{\F_q}/k)$-conjugates of $\mathcal Q$, thus, by Equation (\ref{eq:inters_B_R_L_31}), for all $\tau \in \Gal(\ol{\F_q}/k)$ we have
\[
\tau(\mathcal Q) \cap \{L_{0,2}=0 \} = X_1 + X_5 = \mathcal Q \cap \{L_{0,2}=0 \} 
\]
hence, by the smoothness of $X_1$ and $X_5$, $\mathcal Q$ is defined over~$k$, contradicting Claim \ref{hope_no_conics}.

We now suppose that $\calB_\S$ is not geometrically irreducible. Let $\calB_1, \ldots, \calB_r$ be the geometrically irreducible components of $\calB_\S$. As we already proved, each $\calB_i$ has degree at least $3$, hence the intersection $\calB_i \cap \{L_{0,2}=0\}$ is a sum of at least $3$ points counted with multiplicity. By Equation (\ref{eq:inters_B_R_L_31}), this implies that $\calB_i$ is  passing through $X_9$ or $X_{13}$ hence each $\calB_i$ has degree at least $q$. Since the sum of the degrees of the $\calB_i$'s is equal to $2q{+}2 < 3q$, we deduce that $r=2$ and that either $\deg(\calB_1)= \deg(\calB_2) = q+1$ or, up to reordering, $\deg(\calB_1)= q$ and $\deg(\calB_2) = q+2$. 

If  $\deg(\calB_1)= \deg(\calB_2) = q+1$, Equation (\ref{eq:inters_B_R_L_31}) implies that, up to reordering, $X_1 \in \calB_1(\ol{\F_q})$ and $X_5 \in \calB_2(\ol{\F_q})$. Since $\calB_\S$ is defined over~$k$, then $\Gal(\ol{\F_q}/k)$ acts on $\{\calB_1,\calB_2\}$ and because of the cardinality of such a set, then $\sigma^2$ acts trivially. In particular $X_5 = \sigma^2X_1$ belongs to $\sigma^2 \calB_1(\ol{\F_q}) = \calB_1(\ol{\F_q})$, hence $X_5 \in \calB_1(\ol{\F_q}) \cap \calB_2(\ol{\F_q})$, contradicting the smoothness of $X_5$. This contradiction implies that 
\[
\deg(\calB_1)= q, \quad \deg(\calB_2) = q+2 \,.
\]
For each $i\in\{1,2\}$ let $M_i \in \ol{\F_q}[\alpha, \beta, \gamma]$ be a homogeneous polynomial defining $\calB_i$.
\begin{subClaim}\label{claim:shape_Mi}
There exist homogenous polynomials $F_1, F_2, G_2, N_1, N_2$ in $\ol{\F_q}[\alpha, \beta, \gamma]$ of respective degree $1,1,1,q-4, q-2$ such that
\begin{align}
\label{eq:M1F1} M_1 &= F_1^q + L_{0,1}L_{2,3}L_{0,2}L_{1,3}N_1, \\
\label{eq:M2F2}
M_2 &= F_2^qL_{0,1}L_{2,3} + G_2^qL_{0,2}L_{1,3} + L_{0,1}L_{2,3}L_{0,2}L_{1,3}N_2
\end{align}
\end{subClaim}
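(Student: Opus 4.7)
The plan is to analyze the restrictions of $M_1$ and $M_2$ to each of the four $L$-lines $L_{0,1}, L_{2,3}, L_{0,2}, L_{1,3}$, identify each restriction as a $q$-th power of an explicit linear form (up to an $L{\cdot}L$ factor in the $M_2$ case), and then use Frobenius injectivity on $\ol{\F_q}$ to glue these per-line linear forms into global linear forms $F_1, F_2, G_2$ on $\P^2$.

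First, for each such $L$, since $\calB_\S$ contains no line (as shown just before the claim) and $\calB_1, \calB_2$ are its irreducible components, the divisors $\calB_i \cap \{L{=}0\}$ are well-defined of degree $\deg \calB_i$. A local computation shows that near an $L{\cdot}L$ intersection $\calB_\S$ is smooth with tangent transverse to $L$, contributing multiplicity $1$ to the intersection with $\{L{=}0\}$; while near an $L{\cdot}R$ intersection $L \cap R$, writing $M$ in local coordinates $u = L, v = R$ as $uA - v^qB$ with $A, B$ units, $\calB_\S$ is smooth with tangent line $L$ and contributes multiplicity $q$. Since smooth points of $\calB_\S$ lie on exactly one component, combining these facts with $\deg \calB_1 = q$, $\deg \calB_2 = q + 2$ and~(\ref{eq:inters_B_R_L_31}) (and its analogues for the other three $L$-lines), the only possibility is
\[
\calB_1 \cap \{L{=}0\} = q \, P_L, \qquad \calB_2 \cap \{L{=}0\} = q \, P'_L + X_L + Y_L,
\]
where $\{P_L, P'_L\}$ are the two $L{\cdot}R$ points of $\calB_\S$ on $L$ and $X_L, Y_L$ are the two $L{\cdot}L$ points of $\calB_\S$ on $L$.

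This forces $(M_1)|_L = c_L \ell_L^q$ for some linear form $\ell_L$ on $L$ vanishing at $P_L$ and some scalar $c_L \in \ol{\F_q}^{\times}$; absorbing the unique $q$-th root of $c_L$ in $\ol{\F_q}$ into $\ell_L$, I may assume $(M_1)|_L = \ell_L^q$ with $\ell_L$ then uniquely determined. For any two of the four $L$-lines $L, L'$ meeting at a point $P$ (on $\calB_\S$ or not),
\[
\ell_L(P)^q = M_1(P) = \ell_{L'}(P)^q,
\]
and Frobenius injectivity on $\ol{\F_q}$ gives $\ell_L(P) = \ell_{L'}(P)$. Under the general position of the four $L$-lines (which follows from Claim~\ref{hope_distinct_points}, as three concurrent $L$-lines would collapse two of the $L{\cdot}L$ points $X_i$), the restriction map $H^0(\P^2, \calO(1)) \to \bigoplus_L H^0(L, \calO_L(1))$ summed over the four $L$-lines is injective with image the subspace of tuples compatible at the pairwise intersections, so there is a unique linear $F_1 \in \ol{\F_q}[\alpha, \beta, \gamma]$ with $F_1|_L = \ell_L$ for all four $L$-lines. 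Hence $M_1 - F_1^q$ vanishes modulo each of the four pairwise coprime linear forms $L_{0,1}, L_{2,3}, L_{0,2}, L_{1,3}$ and so is divisible by their product, giving~(\ref{eq:M1F1}) with $\deg N_1 = q - 4$.

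For $M_2$ the argument runs in two stages. On each $A$-group line $L \in \{L_{0,2}, L_{1,3}\}$, the divisor analysis gives $(M_2)|_L = L_{0,1}|_L \cdot L_{2,3}|_L \cdot \mu_L^q$ for a unique linear form $\mu_L$ on $L$ vanishing at $P'_L$; compatibility of $\mu_{L_{0,2}}$ and $\mu_{L_{1,3}}$ at $L_{0,2} \cap L_{1,3}$ (which is not on $\calB_\S$, so $M_2$ and $L_{0,1}L_{2,3}$ are both nonzero there) again follows from Frobenius injectivity applied to $\mu_L(P)^q = M_2(P)/(L_{0,1}(P) L_{2,3}(P))$. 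This yields a linear $F_2$ with $F_2|_L = \mu_L$ on both $A$-group $L$-lines, making $H := (M_2 - F_2^q L_{0,1}L_{2,3})/(L_{0,2}L_{1,3})$ a well-defined form of degree $q$. Applying the same analysis to $H$ on the $B$-group lines $L_{0,1}, L_{2,3}$ produces a linear $G_2$ with $H \equiv G_2^q$ modulo both of them, so $H - G_2^q$ is divisible by $L_{0,1}L_{2,3}$, giving~(\ref{eq:M2F2}) with $\deg N_2 = q - 2$. The main obstacle throughout is precisely the gluing step: the compatibility of the per-line linear forms at line intersections is not a formal consequence of the set-theoretic decomposition of $\calB_\S$, but uses essentially that $q$ is a power of the characteristic and that Frobenius is injective on $\ol{\F_q}$, so the equality $\ell_L(P)^q = \ell_{L'}(P)^q$ upgrades to $\ell_L(P) = \ell_{L'}(P)$.
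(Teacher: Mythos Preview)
Your argument is correct and takes a genuinely different route from the paper's. The paper proceeds \emph{iteratively}: starting from $M_1 \equiv A_1^q \pmod{L_{0,2}}$, it writes $M_1 = A_1^q + B_1 L_{0,2}$, then reduces modulo $L_{1,3}$ to get $B_1 L_{0,2} \equiv (A_2 - A_1)^q \pmod{L_{1,3}}$, and uses the UFD structure of $\ol{\F_q}[\alpha,\beta,\gamma]/L_{1,3}$ (the class of $A_2{-}A_1$ is irreducible or zero) to peel off a factor $L_{0,2}^{q-1}$ from $B_1$; it then repeats with $L_{0,1}$ and $L_{2,3}$, where the distinctness of the $X_i$ forces both sides of the congruence to vanish. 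The same style of argument, slightly more elaborate, handles $M_2$.

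Your approach instead restricts $M_1$ simultaneously to all four $L$-lines, recognises each restriction as a $q$-th power of a linear form, and then \emph{glues} these local linear forms into a single global $F_1$ using that Frobenius is injective on $\ol{\F_q}$ (so $\ell_L(P)^q = \ell_{L'}(P)^q$ at an intersection forces $\ell_L(P) = \ell_{L'}(P)$) together with the elementary fact that, for four lines in general position, the restriction map $H^0(\P^2,\calO(1)) \to \bigoplus_L H^0(L,\calO_L(1))$ is an isomorphism onto the compatible tuples. For $M_2$ you run this in two stages, first on $\{L_{0,2},L_{1,3}\}$ and then on $\{L_{0,1},L_{2,3}\}$. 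The paper's argument is more hands-on and needs nothing beyond factorisation in a polynomial ring; yours is more structural and makes transparent exactly where the characteristic-$p$ hypothesis enters, namely in the gluing step you isolate at the end. Both arguments ultimately rest on the same divisor computations and on Claim~\ref{hope_distinct_points}.
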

\begin{proof}
We start from the first part. Since $\deg \calB_1=q$ and since $X_1, X_5, X_9$ and $X_{13}$ are smooth, Equation (\ref{eq:inters_B_R_L_31}) implies that $\calB_1 \cap \{L_{0,2}=0\}$ is either $qX_{13}$ or $qX_9$. Hence $M_1 \bmod{L_{0,2}}$ is a $q$-th power, i.e. there are homogeneous polynomials $A_1, B_1$, with $A_1$ linear, such that
\[
M_1 = A_1^q + B_1 L_{0,2}\,.
\]
Similarly to $\calB_1 \cap \{L_{0,2}=0\}$, we have that $\calB_1 \cap \{L_{1,3}=0\}$ is either $qX_{14}$ or $qX_{10}$, hence there exists a linear polynomial $A_2$ such that
\begin{align} \label{eq:a_congruence1}
A_2^q \equiv M_1 \equiv A_1^q + B_1 L_{0,2}  \implies  
B_1 L_{0,2} \equiv (A_2 -  A_1)^q (\bmod{L_{1,3}})
\end{align}
We notice, that, since $L_{1,3} = l_\alpha \alpha + l_\beta\beta+ l_\gamma \gamma$ is linear, with $l_\alpha = l_3-l_1\neq 0$, the map  $F \mapsto F\left( -\frac{ l_\beta\beta+ l_\gamma \gamma}{l_\alpha} ,\beta, \gamma \right)$ gives an isomorphism of $\ol{\F_q}[\alpha,\beta, \gamma]/L_{1,3}$ with $\ol{\F_q}[\beta, \gamma]$, which is a UFD. Moreover this isomorphism sends $(A_2 {-}  A_1) \bmod{L_{1,3}}$ to a homogeneous polynomial of degree at most $1$, which means that  $(A_2 {-}  A_1) \bmod{L_{1,3}}$ is either null or irreducible.
We deduce that, in the last congruence of (\ref{eq:a_congruence1}), either both sides are zero or the right hand side gives the prime factorization of the left hand side. 
In both cases we have $B_1\equiv \lambda_1 L_{0,2}^{q-1}$ for some  $\lambda_1 \in \ol{\F_q}$, hence
\[
B_1 = \lambda_1 L_{0,2}^{q-1} + B_2 L_{1,3} \implies M_1 = (A_1 +\lambda_1 L_{0,2}) ^q + B_2 L_{0,2}L_{0,3} = A_3^q +  B_2 L_{0,2}L_{0,3}
\]
for certain homogeneous polynomials $A_3, B_2$, with $A_3$ linear.
Again, the cycle $\calB_1 \cap \{L_{0,1}=0\}$ is either $qX_{3}$ or $qX_{4}$, hence, for a some linear homogenous polynomial $A_4$, we have
\[
\begin{aligned}
&A_4^q \equiv M_1 \equiv  A_3^q + B_2 L_{0,2} L_{1,3}
\quad \implies \quad 
B_2 \, L_{0,2}\, L_{1,3} \equiv (A_4 -  A_3)^q (\bmod{L_{0,1}})\,.
\end{aligned}
\]
As before, the last congruence can be interpreted as an equation in the UFD  $\ol{\F_q}[\alpha,\beta, \gamma]/L_{1,3}$ and, since $(A_4 {-}  A_3) \bmod{L_{0,1}}$ is either irreducible or null, then either both sides are zero or the right hand side gives the prime factorization of the left hand side. The latter is not possible, since the points  $X_1 = \{L_{0,1}=0\}\cap\{L_{0,2}=0\}$ and $X_2 = \{L_{0,1}=0\}\cap\{L_{1,3}=0\}$ are distinct and consequently $L_{0,2}\bmod{L_{0,1}}$ and $L_{1,3}\bmod{L_{0,1}}$ are relatively prime. We deduce that $B_2$ is divisible by $L_{0,1}$. By a similar argument $B_2$ is also divisible by $L_{2,3}$, hence Equation (\ref{eq:M1F1}).

\changee{
We now turn to (\ref{eq:M2F2}). The existence of such an equality is equivalent to the existence of linear homogeneous polynomials $F_2,G_2$ such that 
\[
M_2 \equiv  F_2^qL_{0,1}L_{2,3} + G_2^qL_{0,2}L_{1,3}  \pmod{L_{0,1}L_{2,3}L_{0,2}L_{1,3}}.
\]
Since we are working under the conclusion of Claim \ref{hope_distinct_points}, the polynomial $L_{i,j}$ above are coprime, hence the natural map 
$$ 
{\ol{\F_q}[\alpha,\beta,\gamma]}/{(L_{0,1}L_{2,3}L_{0,2}L_{1,3})} \lto \left({\ol{\F_q}[\alpha,\beta,\gamma]}/{L_{0,1}L_{2,3}}\right) \times \left({\ol{\F_q}[\alpha,\beta,\gamma]}/{L_{1,3}L_{0,2}}\right)
$$
is injective and consequently it is enough proving the existence of polynomials $F_2,G_2$ satisfying
\begin{align}
	M_2 \equiv  F_2^qL_{0,1}L_{2,3}  \pmod{L_{0,2}L_{1,3}} \label{eq:techn_cong_Lij}
	\\
	M_2 \equiv  G_2^qL_{0,2}L_{1,3}  \pmod{L_{0,1}L_{2,3}}.
	\label{eq:techn_cong_Lij2}
\end{align}
We now prove \eqref{eq:techn_cong_Lij}.
}
Since $\deg \calB_2=q+2$ and since $X_1, X_5, X_9$ and $X_{13}$ are smooth, Equation (\ref{eq:inters_B_R_L_31}) implies that $\calB_2 \cap \{L_{0,2}\}$ is either $X_1 {+} X_5 {+} qX_{13}$ or $X_1 {+} X_5 {+}qX_9$, hence we can write $ M_2 = A_5^qL_{0,1}L_{2,3} + B_3 L_{0,2}$
for some homogeneous polynomials $A_5, B_3$, with $A_5$ linear. In a similar fashion, 
$\calB_2 \cap \{L_{1,3}\}$ is either $X_2 {+} X_6 {+} qX_{14}$ or $X_2 {+} X_6 {+}qX_{10}$, hence,
\[\begin{aligned}
	&L_{0,1} L_{2,3}  A_6^q \equiv M_1\equiv L_{0,1} L_{2,3}  A_5^q + B_3 L_{0,2} \\
	& \qquad  \implies 
	B_3 L_{0,2} \equiv L_{0,1}L_{2,3} (A_6 - A_5)^q \,(\bmod{L_{1,3}} )\end{aligned}
\]
As before, in the last equation either both sides are zero or the right hand side gives the prime factorization of the left hand side. In both cases $B_3$ is congruent to a scalar multiple of $L_{0,1}L_{2,3} L_{0,2}^{q-1}$: if  $B_3\equiv 0$  this is obvious, otherwise we need to use that the polynomials $L_{0,1}$, $L_{2,3}$ and $L_{0,2}$ are relatively prime modulo $L_{1,3}$ because the lines $\{L_{0,1}=0\}$, $\{L_{2,3}=0\}$ and $\{L_{0,2}=0\}$ all have different intersection with $\{L_{1,3}=0\}$. 
Hence
\[
M_2 = A_7^qL_{0,2}L_{1,3} + B_4L_{0,1}L_{2,3}.
\]
for certain $A_7, B_4 \in \ol{\F_q}[\alpha, \beta, \gamma]$. \begin{change} This implies congruence  \eqref{eq:techn_cong_Lij} which, together \eqref{eq:techn_cong_Lij2} that can be proven analogously, implies Equation \eqref{eq:M2F2}. \end{change}
\end{proof}

Let $F_1, F_2, G_1, N_1$ and $N_2$ as in Claim \ref{claim:shape_Mi}.
Up to multiplying $M_1$ with an element of $\ol{\F_q}^\times$, we can suppose that $M = M_1M_2$. Reducing this equality modulo $L_{0,2}L_{1,3}$ we see that
\[
L_{0,2}L_{1,3} \text{ divides } L_{0,1}L_{2,3}(F_1F_2 + R_{0,2}R_{1,3})^q.
\] 
Since the $L_{i,j}$'s in the above equation are coprime, then $L_{0,2}L_{1,3}$ divides $F_1F_2 {+} R_{0,2}R_{1,3}$. Since $F_1F_2 {+} R_{0,2}R_{1,3}$ is homogenous of degree at most $2$, then it is a scalar multiple of $L_{0,2}L_{1,3}$. Using a similar argument with  $L_{0,1}L_{2,3}$ we prove that there exist $\lambda, \mu \in \ol{\F_q}$ such that
\begin{equation}\label{eq:patch1}
F_1F_2 + R_{0,2}R_{1,3} = \lambda L_{0,2}L_{1,3}, \quad F_1G_2 - R_{0,1}R_{2,3} = \mu L_{0,1}L_{2,3} \,.
\end{equation}
We have $\lambda \neq 0$, otherwise $F_1$ would be a scalar multiple of either $R_{0,2}$ or $R_{1,3}$: in the first case Equation \ref{eq:M1F1} would imply that $\calB_1$ contains $X_9$ but not $X_{14}=\tau(X_9)$, implying that $\tau(\calB_1)$ is a component of $\calB$ different from $\calB_1$, that is $\tau(\calB_1)=\calB_2$ which contradicts the inequality $\deg(\calB_2)>\deg(\calB_1)$;  in the second case Equation \ref{eq:M1F1} would imply that $\calB_1$ contains $X_{13}$ but not $X_{10}=\tau(X_{13})$, leading to the same contradiction.

Using Equations (\ref{eq:M1F1}), (\ref{eq:M2F2}) and (\ref{eq:patch1}) and the equality $M_1M_2{=}M$, we see that
\begin{equation} 
\begin{aligned} \label{boh43}
0 & = \frac{M_1M_2 - M}{L_{0,1}L_{2,3}L_{0,2}L_{1,3}} = \\
&= \mu^q L_{0,1}^{q-1}L_{2,3}^{q-1} +  \lambda^q L_{0,2}^{q-1}L_{1,3}^{q-1} + F_1^q N_2 + F_2^qN_1L_{0,1}L_{2,3} +  \ldots \\
& \qquad \ldots + G_2^qN_1L_{0,2}L_{1,3} + N_1N_2 L_{0,1}L_{2,3}L_{0,2}L_{1,3}\\
& \equiv \lambda^q (L_{0,2}L_{1,3})^{q-1} + F_1^qN_2 + G_2^qN_1L_{0,2}L_{1,3} \pmod{L_{0,1}}.
\end{aligned}
\end{equation}
As already observed in the proof of Claim \ref{claim:shape_Mi}, $\ol{\F_q}[\alpha,\beta, \gamma]/L_{0,1}$ is isomorphic to $\ol{\F_q}[\beta, \gamma]$ through the map $F \mapsto F\left( -\frac{ l_\beta\beta+ l_\gamma \gamma}{l_\alpha} ,\beta, \gamma \right)$, where $l_\alpha, l_\beta$ and $l_\gamma$ are the coefficients of $L_{0,1}$. In particular, $\ol{\F_q}[\alpha,\beta, \gamma]/L_{0,1}$ is a UFD and for any $F \in \ol{\F_q}[\alpha, \beta, \gamma]/L_{0,1}$ we denote $\tilde F$ its image in $\ol{\F_q}[\beta, \gamma]$ through the above map. With this notation  (\ref{boh43}) implies that  both $\tilde L_{0,2}$ and $\tilde L_{1,3}$ divide $\tilde N_2\tilde{F_1^q}$.
More precisely $\tilde L_{0,2}$ and $\tilde L_{1,3}$ divide $\tilde N_2$, because $\tilde F_1$ is relatively prime with both $\tilde L_{0,2}$ and $\tilde L_{1,3}$, since $\calB_1 \cap L_{0,1}$ does not contain $X_1 = \{L_{0,2} {=}0\}\cap \{L_{0,1} {=}0\}$ nor $X_3 = \{L_{1,3} {=}0\} \cap \{L_{0,1} {=}0\}$. 
Moreover, since  the points $X_1 = \{L_{0,2} {=}0\}\cap \{L_{0,1} {=}0\}$ and $X_3 = \{L_{1,3} {=}0\} \cap \{L_{0,1} {=}0\}$ are distinct,  $\tilde L_{0,2}$ is relatively prime with $\tilde L_{1,2}$ and we can write $\tilde N_2 = \tilde L_{0,2}\tilde L_{1,3}N_3$ for some $N_3 \in \ol{\F_q}[\beta, \gamma]$.
Substituting in (\ref{boh43}) we get
\[
\lambda^q \tilde L_{0,2}^{q-2} \tilde L_{1,3}^{q-2} + \tilde F_1^q N_3 + \tilde G_2^q \tilde N_1 =0.
\]
Since $\tilde L_{0,2}$ and $\tilde L_{1,2}$ are coprime and since $\lambda\neq 0$, this contradicts Lemma \ref{lem:hopehopehope} below. 

In particular the assumption of the reducibility of $\calB$, together with the conclusions of Claim \ref{hope_distinct_points}, led to contradiction. We deduce that for all but at most $450$ choices of $\S \in E(k)$ the curve $\calB_\S$ is geometrically irreducible. Since $\# E(k) > 450$  and since all the components of $\calB$ project surjectively to $E$, we deduce that $\calB$ is reduced and geometrically irreducible.

\begin{subLemma}\label{lem:hopehopehope}
Let $L_1, L_2 \in \ol{\F_q}[\beta, \gamma]$ be relatively prime homogenous linear polynomials. Then, there exist no homogenous polynomials $A,B,C,D$ in $\ol{\F_q}[\beta, \gamma]$ such that 
\[ 
L_1^{q-2}L_2^{q-2} = A^qB + C^qD.
\]
\end{subLemma}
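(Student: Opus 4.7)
The plan is to reduce by a linear change of variables to the case $L_1=\beta$, $L_2=\gamma$, and then compare the coefficient of the monomial $\beta^{q-2}\gamma^{q-2}$ on the two sides of the supposed identity.

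First I would pin down the degrees. Homogeneity of $A^qB+C^qD=L_1^{q-2}L_2^{q-2}$ forces $q\cdot\deg A+\deg B=2q-4$, hence $\deg A\le 1$, and analogously $\deg C\le 1$. If $A$ were a scalar, then setting $B:=A^{-q}(L_1^{q-2}L_2^{q-2}-C^qD)$ with any $C,D$ solves the equation, so the non-existence assertion is only meaningful when $\deg A\ge 1$, and symmetrically when $\deg C\ge 1$; these are the cases implicit in the application (where $A=\tilde F_1$, $C=\tilde G_2$ are nonzero linear forms). Similarly, if $B=0$, then unique factorization in $\ol{\F_q}[\beta,\gamma]$ applied to $C^qD=L_1^{q-2}L_2^{q-2}$ forces $C^q$ to divide $L_1^{q-2}L_2^{q-2}$; since each $L_i$ appears with exponent strictly less than $q$, this forces $C$ to be a scalar, again a degenerate case (and symmetrically for $D=0$). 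So the problem reduces to the case
\[\deg A=\deg C=1,\qquad \deg B=\deg D=q-4,\]
with $q\ge 4$; the case $q=3$ falls into one of the degenerate situations above.

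Since $L_1,L_2$ are linearly independent, some $g\in\mathrm{GL}_2(\ol{\F_q})$ sends them to $\beta,\gamma$, and after this change of variables the identity to be excluded reads
\[\beta^{q-2}\gamma^{q-2}=A^qB+C^qD.\]
Writing $A=a_0\beta+a_1\gamma$ and using that $q$ is a power of the characteristic of $\ol{\F_q}$ (so Frobenius is additive on linear forms), one has $A^q=a_0^q\beta^q+a_1^q\gamma^q$. Expanding with $B=\sum_{i=0}^{q-4}b_i\beta^i\gamma^{q-4-i}$, every monomial of $A^qB$ has $\beta$-exponent lying in $\{0,\ldots,q-4\}\cup\{q,\ldots,2q-4\}$; the middle band $\{q-3,q-2,q-1\}$ of $\beta$-exponents is never hit. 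The same argument applies to $C^qD$. Consequently the coefficient of $\beta^{q-2}\gamma^{q-2}$ in $A^qB+C^qD$ is $0$, whereas the right-hand side has this coefficient equal to $1$, a contradiction.

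The only mildly delicate point is the degree bookkeeping that forces $\deg A=\deg C=1$; once that is in place, the heart of the proof is the elementary observation that, in characteristic $p$, raising a linear form to the $q$-th power kills all mixed monomials and thereby creates a gap of three consecutive $\beta$-exponents in the image of the map $(B,D)\mapsto A^qB+C^qD$. The coprimality hypothesis on $L_1,L_2$ is used only to enable the linear change of variables.
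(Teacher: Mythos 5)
Your proof is correct and takes essentially the same route as the paper: after a linear change of variables reducing to $L_1=\beta$, $L_2=\gamma$, the paper likewise concludes by noting that every monomial of $A^qB+C^qD$ is divisible by $\beta^q$ or $\gamma^q$ (your ``band gap'' in $\beta$-exponents is the same observation), which the monomial $\beta^{q-2}\gamma^{q-2}$ is not. Your extra degree bookkeeping and your explicit remark that constant $A$ or $C$ (and zero $B$ or $D$) must be excluded is a fair point of added care --- the paper's statement and proof tacitly assume $A,C$ are non-constant, as they are in the application --- but the core argument is identical.
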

\begin{proof}
The zeroes of $L_1$ and $L_2$ in $\P^1$ are distinct, hence, up to a linear transformation we can suppose that their zeroes are $0$ and $\infty$. In particular, up to scalar multiples we can suppose $L_1 =\beta$ and $L_2 = \gamma$, implying that $A^qB + C^qD= \beta^{q-2}\gamma^{q-2}$. This is absurd because any monomial appearing in $A^q$ or in $B^q$ is either a multiple of $\beta^q$ of a multiple of $\gamma^q$, hence the same is true for all the monomials appearing in  $ A^qB + C^qD$.
\end{proof}

\subsection{The irreducible components of $\calC$}
In this subsection we prove that all the geometrically irreducible components of $\calC$ are defined over~$k$. To do so, we can ignore (\ref{eq:43_item4_main}) in the definition of $\calC$. The strategy is applying Lemma \ref{lem:good_curves} to the variety $\calB$, using the rational functions 
\[
\begin{aligned}
&u_1,u_2,u_3 \colon \calB \dashrightarrow \P^1\,, \qquad u_i(\alpha, \beta, 1,\S) = f_{\alpha, \beta,\S} (\sigma^{i-1}\p)\,, \\ 
&w_1,w_2,w_3 \colon \calB \dashrightarrow \P^1\,, \qquad w_i(\alpha, \beta, 1, \S) =  -f_{\alpha, \beta, \S} (\sigma^{i-1} \Pp)\,,
\end{aligned}
\]
and the irreducible divisor $\Zp \subset \calB$ being the Zariski closure of 
\begin{subeqn}\label{eq:def_Z_43}
\left\{(\alpha, \beta, P) \in (\A^2{\times}E')(\ol{\F_q}): \begin{matrix} 
	P = - \p - \sigma\p - \sigma^3\p - \widetilde{P}\,, \\
	\alpha = \frac{(f_{\p}(P){-}f_{\sigma\p}(P))\beta + l_1f_{\p}(P) {-} l_0f_{\sigma\p}(P)}{l_0 {-} l_1} \end{matrix} \right\}.
\end{subeqn}
\begin{subClaim}\label{claim_43_use_lemma}
The variety $\Zp$ is generically contained in the smooth locus of $\calB$ and the rational function $u_1{-}u_2$ vanishes on $\Zp$ with multiplicity $1$.
\end{subClaim}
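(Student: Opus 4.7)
The plan is to reduce everything to the identity $u_1 - u_2 = L_{0,1}(\alpha,\beta,1)/((l_0{+}\beta)(l_1{+}\beta))$, which follows from $u_i = f_{\alpha,\beta,\S}(\sigma^{i-1}\p)$ and $l_i = f_{\widetilde{\S}}(\sigma^i\p)$. Since the denominator is regular and non-vanishing at a generic point of $\Zp$, both the smoothness of $\calB$ along $\Zp$ and the order of vanishing of $u_1{-}u_2$ reduce to computing $L_{0,1}|_\calB$ in local coordinates $(\xi,\beta,\tau)$ on $\A^2\times E$, where $\xi := \alpha - \alpha_0(\beta)$ and $\tau$ is a local uniformizer at the fixed $\S$-value $\S^\star := -\p - \sigma\p - \sigma^3\p - \widetilde{\S}$.

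First, to see that $\Zp\subset\calB$ I would exhibit a second linear factor of $M$ vanishing on $\Zp$. The claim is that $\S^\star$ is chosen precisely so that $L_{0,1}|_{\S^\star}$ and $L_{1,3}|_{\S^\star}$ are proportional as linear forms in $\alpha,\beta$. A short direct calculation shows that all three coefficient ratios coincide if and only if $F(\S^\star) = 0$, where
\[
F(\S) := (l_3{-}l_1)f_\p(\S) - (l_3{-}l_0)f_{\sigma\p}(\S) + (l_1{-}l_0)f_{\sigma^3\p}(\S).
\]
I would then establish this identity by divisor theory on $E$: the three coefficients sum to zero, which cancels the common leading $y/x$-behaviour of the $f_{\sigma^i\p}$ at $O_E$, so $F$ has at most the three simple poles $-\p, -\sigma\p, -\sigma^3\p$. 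Two zeros are found directly: $F(\widetilde{\S}) = 0$ by substituting $f_{\sigma^i\p}(\widetilde{\S}) = l_i$ and simplifying, and $F(O_E) = 0$ by expanding $f_P$ in the local parameter $t = -x/y$ at $O_E$ (where one checks that $f_P$ has no constant term: the leading terms are $-1/t$, then $-x(P)\,t$). Since the sum of zeros equals the sum of poles on $E$, the remaining zero must be $\S^\star$. The resulting proportionality of $L_{0,1}|_{\S^\star}$ and $L_{1,3}|_{\S^\star}$ makes both terms of $M$ vanish on $\Zp$.

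Next, for smoothness and multiplicity, I would expand locally
\[
L_{0,1} = A\xi + C\tau + \text{h.o.t.}, \qquad L_{1,3} = A'\xi + C'\tau + \text{h.o.t.},
\]
with $A = l_1{-}l_0$, $A' = l_3{-}l_1$, and $C,C'$ functions of $\beta$, while the other factors $L_{0,2}, L_{2,3}, R_{i,j}$ are units at a generic $P_0\in\Zp$ (their generic non-vanishing on $\Zp$ is verified case by case, in the spirit of Claim \ref{hope_distinct_points}). Writing $M = L_{1,3}u_1 - L_{0,1}u_2$ with units $u_1,u_2$, the linear part of $M$ at $P_0$ is $(A'u_1 - Au_2)\xi + (C'u_1 - Cu_2)\tau$. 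Generically in $\beta$, the coefficient $A'u_1 - Au_2$ is non-zero, because $u_1/u_2$ varies non-trivially with $\beta$ and cannot identically equal the fixed scalar $A/A'$; this gives smoothness. The implicit function theorem then yields $L_{0,1}|_\calB = (CA'-AC')u_1/(A'u_1 - Au_2)\cdot \tau + O(\tau^2)$, and $CA'-AC'$ is generically non-zero because the proportionality of $L_{0,1}$ and $L_{1,3}$ only holds at $\tau=0$, not identically in $\tau$. Thus $v_\Zp(u_1-u_2) = v_\Zp(L_{0,1}|_\calB) = 1$.

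The main obstacle is the identity $F(O_E)=0$: the cancellation at pole-order is automatic from the coefficient sum, but one must push the local expansion of each $f_P$ at $O_E$ one order further to confirm that the $\tau^0$-coefficient of $F$ also vanishes, so the first non-zero term in $F$ at $O_E$ is order $\tau$. A secondary task is checking the generic non-vanishing of the other $L_{i,j}, R_{i,j}$ on $\Zp$ — this rules out further exceptional proportionalities analogous to $F(\S^\star)=0$ and excludes only finitely many degenerate choices of $\widetilde{\S}$, exactly as in Claim \ref{hope_distinct_points}.
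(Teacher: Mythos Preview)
Your approach is correct and rests on the same key ingredient as the paper: the function $\lambda$ (your $F$), whose vanishing at $\S^\star$ encodes the proportionality of $L_{0,1}|_{\S^\star}$ and $L_{1,3}|_{\S^\star}$, and whose third zero is pinned down by the two ``easy'' zeros at $\widetilde{\S}$ and $O_E$ via the sum-of-zeros-equals-sum-of-poles identity on $E$.

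The organizational difference is that you separate smoothness from the order computation and handle both through explicit local coordinates and the implicit function theorem, whereas the paper observes that $M|_{\{L_{0,1}=0\}} = L_{0,2}L_{1,3}R_{0,1}^qR_{2,3}^q$, so that proving $\Zp$ is a \emph{reduced} component of $\calB \cap \{L_{0,1}=0\}$ comes down to (a) $\lambda$ having a simple zero at $\S^\star$, so that $\{L_{0,1}{=}0\}\cap\{L_{1,3}{=}0\}$ contains $\Zp$ with multiplicity one, and (b) $L_{0,2},R_{0,1},R_{2,3}$ not vanishing on $\Zp$. That single intersection-multiplicity statement gives smoothness of $\calB$ along $\Zp$ and $v_\Zp(L_{0,1})=1$ simultaneously, with no case analysis.

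Two places where your sketch is looser than the paper. First, the assertion $A'u_1 - Au_2 \neq 0$ ``because $u_1/u_2$ varies with $\beta$'' is not justified and is in fact unnecessary: if that $\xi$-coefficient happens to vanish, your own condition $CA'-AC'\neq 0$ (equivalent to $\lambda$ having a \emph{simple} zero at $\S^\star$) forces the $\tau$-coefficient $C'u_1-Cu_2$ to be nonzero, and the order-one conclusion goes through with $\xi$ and $\tau$ exchanged. Second, the non-vanishing of the remaining $L_{i,j},R_{i,j}$ on $\Zp$ is not ``in the spirit of Claim~\ref{hope_distinct_points}'': that claim concerns generic $\S\in E(k)$, while here $\S=\S^\star$ is fixed and special. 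The paper treats, e.g., $R_{0,1}$ by a targeted divisor argument: restricting $R_{0,1}$ to $\Zp$ and assuming it vanishes identically in $\beta$ forces two explicit degree-four functions $\lambda_0,\lambda_1\in\ol{\F_q}(E)$ to share all four zeros, hence to be proportional, contradicting the $\ol{\F_q}$-linear independence of $f_\p,f_{\sigma\p},f_\Pp,f_{\sigma\Pp}$.
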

\begin{proof}
We restrict to an open subset $U \subset \P^2\times E$ containing the generic point of $\Zp$. 
Up to shrinking $U$, the rational functions $u_i, w_i$ can be extended to regular functions on $U$ using the definition (\ref{eq:43_def_f}) of $f_{\alpha, \beta, \S}$, and we have
\[
\begin{aligned}
u_1 - u_2 &= \frac{L_{0,1}(\alpha, \beta,1,\S )}{\big( l_0 + \beta\big) \big( l_1 + \beta\big)}\,,
\end{aligned}
\]
where $L_{i,j}(\alpha, \beta, \gamma, \S) \in \ol{\F_q}[U]$ is defined as in (\ref{eq:def_Lij_Rij}), as well as $R_{i,j}(\alpha, \beta, \gamma, \S)$. 
Since we can assume that $l_0{+}\beta, l_1{+}\beta$ are invertible on $U$ and since $\Zp$ is generically smooth, it is enough showing that $\Zp\cap U$ is a component of $(\calB \cap U) \cap {\{L_{0,1} = 0\}}$ having multiplicity one. Up to shrinking $U$, the  $\calB \cap U$ is the vanishing locus, inside $U$, of 
\[
M(\alpha, \beta, \S) := (L_{0,2}L_{1,3}R_{0,1}^q R_{2,3}^q - L_{0,1}L_{2,3}R_{0,2}^q R_{1,3}^q)(\alpha, \beta, 1, \S)  \quad \in \ol{\F_q}[U]\,.
\]
Since restricting of $M$ to $\{L_{0,1}=0\}$ is the same as restricting $L_{0,2}L_{1,3}R_{0,1}^q R_{2,3}^q$, 
it is enough showing that $L_{0,2}$, $R_{0,1}$, $R_{2,3}$ do not vanish on $\Zp$ and that $\{L_{1,3} = 0\} \cap \{L_{0,1}=0\}$ contains $\Zp\cap U$ with multiplicity $1$. We start from the latter. Eliminating the variable $\alpha$ we see that, up to shrinking $U$,  $\{L_{1,3} = 0\} \cap \{L_{0,1}=0\}$ is defined by the equations 
\begin{subeqn}\label{eq:eqs_inters_for_lemma_43}
	\lambda (\S) =0  \quad \text{and} \quad  (l_1 - l_0)\alpha + (f_{\p}(\S) - f_{\sigma\p}(\S))\beta +  l_1 f_{\p}(\S) - l_0 f_{\sigma\p}(\S) =0 \,,
\end{subeqn}
where
\[
\lambda (\S) := (l_1 {-} l_0)f_{\sigma^3\p}(\S) + (l_3 {-} l_1) f_{\p}(\S) + (l_0 {-} l_3)f_{\sigma\p}(\S) \quad \in \ol{\F_q}(E)\,.
\]
The function $\lambda$ has three simple poles, namely ${-}\p, {-}\sigma \p, {-}\sigma^3\p$, and we easily verify that $\lambda(\widetilde{\S})=\lambda(O_E)=0$. 
We deduce that $\S={-}\p{-}\sigma\p{-}\sigma^3\p{-}\S_0$ is a simple zero of $\lambda$. This, together with the fact that the second equation in (\ref{eq:eqs_inters_for_lemma_43}) is equal to the second equation in the definition (\ref{eq:def_Z_43}) of $\Zp$, implies that $\{L_{1,3} = 0\} \cap \{L_{0,1}=0\}$ contains $\Zp \cap U$ with multiplicity~$1$.

We now suppose by contradiction that $R_{0,1}$ vanishes on $\Zp \cap U$. Substituting $\alpha$ and $\S$ in $R_{0,1}$ as in the definition (\ref{eq:def_Z_43}) of $\Zp$, we see that
\[
R_{0,1}(\alpha, \beta, 1, P)|_{\Zp\cap U} = \frac{\lambda_0({-}\p{-}\sigma\p{-}\sigma^3\p{-}\widetilde{\S})}{l_0-l_1} \beta + \frac{\lambda_1({-}\p{-}\sigma\p{-}\sigma^3\p{-}\widetilde{\S})}{l_0-l_1}\,,
\]
where
\[
\begin{aligned}
\lambda_0(\S) := & (r_1-r_0)(f_{\p} - f_{\sigma\p})(\S) - (l_1-l_0)(f_{\Pp} - f_{\sigma\Pp})(\S) \,, \\
\lambda_1(\S) := & (r_1-r_0) (l_1f_{\p}(\S) - l_0f_{\sigma\p}(\S)) - (l_1-l_0)(r_1 f_{\Pp}(\S) - r_0f_{\sigma\Pp}(\S)) \,,
\end{aligned}
\]
and we deduce that both $\lambda_0$ and $\lambda_1$ vanish on $P={-}\p{-}\sigma\p{-}\sigma^3\p{-}\widetilde{\S}$. Both $\lambda_0$ and $\lambda_1$ have $4$ poles and $4$ zeroes counted with multiplicity: they have the same poles they share three zeroes, namely $O_E, \widetilde{\S}$ and ${-}\p{-}\sigma\p{-}\sigma^3\p{-}\widetilde{\S}$. Since, in the group on $E(\ol{\F_q})$, the sum of the zeroes of an element of $\ol{\F_q}(E)^\times$ is equal to the sum of the poles, then $\lambda_0$ and $\lambda_1$ also share the fourth zero, hence $\lambda_0$ and $\lambda_1$ differ by a multiplicative constant in $\ol{\F_q}$. This is absurd because $l_0 \neq l_1$ and because the functions $f_{\p}$, $f_{\sigma\p}$, $f_{\Pp}$, $f_{\sigma\Pp}$ are  $\ol{\F_q}$-independent.

A similar argument implies that $R_{2,3}$ does not vanish on $\Zp \cap U$, while the case of $L_{0,2}$ is easier. Substituting $\alpha$ and $\S$ in $L_{0,2}(\alpha, \beta,1,\S)$ as in the definition (\ref{eq:def_Z_43}) of $\Zp$ we get
\[
L_{0,2}(\alpha, \beta, 1, P)|_{\Zp\cap U} = \frac{ (\beta + l_0) \lambda_2 ({-}\p{-}\sigma\p{-}\sigma^3\p{-}\widetilde{\S})}{l_0-l_1}\,,
\]
where
\[
\begin{aligned}
&\lambda_2(\S) := (l_2 {-} l_1)f_{\p}(\S) + (l_0 {-} l_2)f_{\sigma\p}(\S) + ( l_1{-}l_0)f_{\sigma^2\p}(\S) \quad \in \ol{\F_q}(E)  \,.
\end{aligned}
\]
Analogously to $\lambda$, we see that the zeroes of $\lambda_2$ are $\widetilde{\S},O_E$ and ${-}\p{-}\sigma\p{-}\sigma^2\p{-}\S_0$, hence $\lambda_2$ does not vanish on ${-}\p{-}\sigma\p{-}\sigma^3\p{-}\S_0$, implying that $L_{0,2}$ does not vanish on $\Zp \cap U$. 
\end{proof}

We can show that $u_2{-}u_3$, $w_3{+}u_3$, $w_3{+}u_1$ and $w_i{-}w_j$ do not vanish on $\Zp$ using similar arguments to the ones used in the last part of the above proof.
\begin{change}
	Hence, by Lemma \ref{lem:good_curves}, up to forgetting  \eqref{eq:43_item4_main} from the definition of $\calC$,  all the components of $\calC$  are surfaces defined over~$k$ that map dominantly on $\calB$. This does not change after imposing condition (\ref{eq:43_item4_main}), which only restricts to an open dense of $\calB$ (indeed $\calB$ is irreducible and the equations in (\ref{eq:43_item4_main}), seen as homogeneous equations in $\P^2_{\ol k(E)}$, 
%	have the same degree of the equation \eqref{eq:eq_B} defining $\calB$ but 
	define varieties of the same degree as $\calB$ but different from it, as it can be checked by looking at the intersection with the generic line $\{R_{0,2}=0\}$, as in Claim \ref{hope_distinct_points}.)
\end{change}

\subsection{$k$-rational points on $\calC$  and where to find them}\label{subsec:points_43}
Finally we prove that $\# \calC(k)$ is larger than $\tfrac{1}{2}(\# k)^2$.
The surface $\calC$ is contained in the open subset of $\A^2 \times (E  \setminus \{O_E\}) \times \PGL \times \A^1$ made of points $(\alpha, \beta, (x,y), \smt abcd, z)$ such that $c \neq 0$. Hence  $\calC$ is contained in $\A^8$, with variables $\alpha, \beta$, $x,y$, $a,b,d$, $z$ and it is defined by the following equations:. 
\begin{itemize}
\item $0=p_1:=W(x,y)$, the Weierstrass equation defining $E$;
\item $0=p_2 :=(d^q {-} a)^{q+1}(z^q{-}z)^{q^2{-}q} - (ad {-} b)^q (\tfrac{z^{q^2}{-}z}{z^q{-}z})^{q+1}$, 
the dehomogenization of (\ref{eq:43_item2}) in $c$; \item $0=p_i(\alpha, \beta, x,y,a,b,d)$ for $i = 3,4,5,6$, obtained by (\ref{eq:43_item3}) after dehomogenizing in $c$,  substituting $f_{\sigma^i\p}, f_{\sigma^i\Pp}$ by their expressions in $\alpha, \beta,x,y$ and clearing denominators;
\item a number of conditions $0\neq q_j$ ensuring that $\S\neq - \sigma^i\p, \S \neq - \sigma^i\Pp$, $\beta + f_{\widetilde{\S}}(\sigma^i\p) \neq 0$, $\beta +  f_{\widetilde{\S}}(\sigma^i\Pp) \neq 0$, $d^q-a\neq0$, $ad-b\neq0$, that (\ref{eq:43_item4_main}) is satisfied, that $f_{\alpha, \beta,\S}(\sigma^i\p)$ are distinct and that $f_{\alpha, \beta,\S}(\sigma^i\Pp)$ are distinct.
\end{itemize} 
In particular, $\calC$ can be seen as a closed subvariety of $\A^9$, with variables $\alpha,\beta$, $x,y$, $b,c,d,z$ and $t$  defined by the seven equations $p_1=0, \ldots, p_6=0$ and $0=p_7:=tq_1\cdots q_r -1$. Let $\calC_1, \ldots, \calC_s$ be the geometrically irreducible components of $\calC$. By \cite[Remark $11.3$]{Npoints}, %using Grothendieck-Lefschetz Trace Formula and Deligne’s purity theorem \cite[Thm. 1, p. 314]{Del} we get 
we have
\begin{subeqn}\label{eq:Weil43}
\# \calC(k) \ge \# \calC_1(k) \ge (\# k)^2 - (\totaldeg-1)(\totaldeg-2)(\# k)^{\frac 32} - K(\calC_1) (\# k)\,,
\end{subeqn}
where $\totaldeg$ is the degree of $\calC_1$ and $K(\calC_1)$ is the sum of the Betti numbers of $\calC$ relative to the compact $\ell$-adic cohomology. Since $\calC_1$ is a component of $\calC$ then 
\begin{subeqn}\label{eq:degD43}
\totaldeg %\leq \delta (\calC) 
\leq \deg(p_1)\cdots \deg(p_7) \,.
\end{subeqn}
Since $\calC$ is the disjoint union of the $\calC_i$, the Betti numbers of $\calC$ are the sums of the Betti numbers of the $\calC_i$. Hence, using \cite[Corollary of Theorem 1]{Katz}
\begin{subeqn}\label{eq:BettiD43}
K(\calC_1) \leq K(\calC) \leq 6 \cdot 2^{7}\cdot \left(3+7\max_{i=1,\dots ,7}\{\deg(p_i)\}\right)^{10}\,.
\end{subeqn}Combining Equations (\ref{eq:Weil43}), (\ref{eq:degD43}), (\ref{eq:BettiD43}) and the inequalities $\deg p_1 \le 3$, $\deg p_2 \le q^3{+}q$, $\deg p_3, \ldots, \deg p_6 \le 2q{+}3$, $\deg p_7 \le 16q^2 {+} 37 q{+} 75$, we deduce that $\#\calC(k)> \tfrac 12 (\# k)^2$ when $\# k \ge q^{80}$ and $q \ge 3$.

\begin{change}
As in Section \ref{subsec:points_32}, we give a probabilistic procedure to find a $k$-rational point on $\calC$, which, as explained at the end of Section \ref{sec:idea_descent}, is used in the algorithm of Proposition \ref{secondhalfdescentprop}.
We find such a $k$-rational point by successively finding the roots of univariate polynomials (e.g. by Cantor-Zassenhaus \cite{FactorCZ} or by von zur Gathen-Shoup \cite{FactorShoupGathen}) or by solving linear systems. In detail we proceed as follows.
\begin{itemize}
	\item Choose random $\alpha,x \in k$.
	\item Look for $y\in k$ such that $P = (x,y)$ lies on $E$; this is a matter of solving the quadratic equation $p_1=0$, and if such a $y$ does not exists, restart from the first step.
	\item For all $y$ as above, look for $\beta\in k$ satisfying \eqref{eq:43_cross_ratio}; this equation, after clearing denominators becomes a polynomial of degree at most $2q+2$ and if for all $y$ we find no root $\beta\in k$, restart from the first step.
	\item Check if the values $f(\sigma^i\p)$ are distinct and the values $f(\sigma^i\Pp)$ are distinct: if they are not distinct we restart from the first step, otherwise we compute $\smt abcd \in \PGL(k)$ satisfying \eqref{eq:43_item3} (or equivalently the polynomials $p_3,p_4,p_5$); this is a matter of solving linear equations in $a,b,c,d$ and there is always a solution if $f(\sigma^i\p)$ are distinct and $f(\sigma^i\Pp)$ are distinct.
	\item For all $(y,\beta,\smt abcd)$, compute a $z\in k$ satisfying \eqref{eq:43_item2}. That polynomial has degree at most $q^3-q$ and if, for all $(y,\beta,\smt abcd)$, it has no root $z\in k$, we restart from the first step.
	\item Finally we check if  (\ref{eq:43_item4_main}) holds and if 
	$\beta + f_{\widetilde{\S}}(\sigma^i\p) \neq 0$, $\beta +  f_{\widetilde{\S}}(\sigma^i\Pp) \neq 0$, $d^qc-ac^q\neq0$: if they all hold then $(\alpha, \beta, P,\smt abcd, z)$ is a point on $\calC$, if it never holds we restart from the first step.
\end{itemize}
Since the degree of the polynomials involved is at most $q^3$, the expected running time of each step is polynomial in $q\log(\# k)$ (see e.g. \cite[Theorem 3.5 and 3.7]{FactorShoupGathen}). 
To prove that the expected running time of the above procedure is polynomial in $q\log(\# k)$ it is enough proving that the expected number of times each step is repeated is polynomial in $q$. Indeed, we prove that
for given $x,\alpha \in k$, there is a probability at least $\frac{1}{64q^4}$ that the above steps go through, finding a point in $\calC(k)$. 

Since $\alpha$ and $x$ are chosen uniformly (randomly), this is equivalent to saying that there are at least $\frac{(\# k)^2}{64q^4}$ values $(\alpha,x)$ that lift to a point in $\calC(k)$.
We notice that the map $(\alpha,x)\colon \calC \to \A^2$, up to excluding at most $450$ sub-curves of $\calC$, is finite to one, with fibres containing at most $2(2q+2)(q^3-q)$ points: indeed the map $(x,\alpha)$ is the composition of 
\begin{itemize}
\item the projection $\calC \to \calB'$ which has finite fibres and degree $(q^3-q)$ by Lemma \cref{lem:good_curves};
\item  the map $\colon \calB' \to  \A^1\times E$, $(\alpha, \beta, P) \mapsto (\alpha, P)$ which has degree $2q+2$ and, up to excluding the bad $P$'s in Claim \ref{hope_distinct_points}, it has finite fibres (indeed $\calB_P$ is necessarily a curve since $\calB$ irreducible surface mapping dominantly on $E$ and if $P$ is ``good'', then $\calB_P$ contains no lines as proven using \eqref{eq:lines_conj_inters} hence the map $\alpha$ is has finite fibres on $\calB_P$) ;
\item the  degree-$2$  map $(x,\alpha)\colon E\times \A^1 \to \A^1\times \A^1$.
\end{itemize}
Denote $\calC'$ the open of $\calC$ obtained excluding the points above a ``bad'' $P$. 
Since $B_P$ is a curve in $\A^2$ of degree $2q+2$, each of the ``bad'' $P$'s gives at most $(\# k)(2q+2)(q^3-q)$ points on $\calC$, which using that $\# k>q^{80}$, implies that $\calC(k)-\calC'(x)$ containes less than $\frac{(\# k)^2}{4}$ points. Since $\#\calC(k)> \frac{(\# k)^2}{2}$  we deduce $$\#\calC'(k)> \frac{(\# k)^2}{4} .$$
We proved that the map $(\alpha,x)\colon \calC'\to \A^2$ has finite fibres contining at most $2(2q+2)(q^3-q)$ points, hence at least
$$ \frac{(\# k)^2}{8(2q+2)(q^3-q)} > \frac{(\# k)^2}{64 q^4} $$
points $(\alpha,x)\in \A^2(k)$ can be lifted to $\calC'(k)$ (hence to $\calC(k)$).

\end{change}

\section{Some explicit computations}\label{sec:example}

To give a better view on the algorithm, we show in explicit examples how we can compute elliptic presentations, find and represent traps and elements in the factor base, and how the descent procedure works. 
Once the framework and descent procedure is set, the algorithm in Section \ref{sec_algo} reduces the logarithm of every element to the logarithms of elements in the factor base and performs linear algebra, following a classical strategy (see also \cite{ZKG} and \cite{KW}); these general steps are described without explicit computations. 

Our algorithm is designed to prove that the discrete logarithm problem is at most quasi-polynomially hard on fields of small characteristic, but not designed for practical efficiency. Other algorithms have proved their strength in the real world (see the Introduction). % even without a rigorous proof of their applicability. 
% computes discrete logs by following the general idea in  and in Section $5$. We do not show this part in pratice. Indeed our algorithm is not designed for practical efficiency, since our purpose is to prove that the discrete logarithm problem is at most quasi-polynomially hard on fields of small characteristic, and other algorithms, such as \cite{ZKG} or \cite{Joux} have already proved their strength in the real world. % even without a rigorous proof of their applicability. 
We performed our computations in PARI/GP (\cite{PARI2}), using the script available at \cite{my_repo}.

%For the use of a descent procedure into a general discrete log algorithm 

To illustrate how to find elliptic presentations of (extensions of) finite fields, suppose first we want to compute discrete logarithms in a field $F = \F_{2^{93}}$. The field $F$ itself does not admit a good elliptic presentation since the ``base field'' $\F_q$ could only be $F$, $\F_{2^{31}}$, $\F_{2^3}$ or $\F_2$: in the first two options $q$ is not small compared to $\# F$ and the algorithm, which computes logs up to $\F_q^\times$, would not be interesting; the other two options are not feasible, since there are no elliptic curves over $\F_2$ or $\F_8$ with a point of order $31$.
Instead, compatibly with Proposition \ref{propfieldswithpresentation}, we can embed $F$ in  $K = \F_{2^{93 \cdot 2}}$ which admits an elliptic presentation with $q=2^6$: by \cite[Theorem 4.1]{waterhouse1969abelian} there exists at least one elliptic curve $E/\F_{64}$ with $\# E(\F_{64}) = 62$, hence with a point $P_0$ of order $31$. Representing $\F_{64}$ as 
$\F_2[\ti]/(\ti^6 + \ti^5 + \ti^3 + \ti^2 + 1)$ and enumerating all possible Weierstrass equations over $\F_{64}$, we find the following $E$ and $P_0$: 
$$E : Y^2 + X Y + X^3 + \ti X^2 + (\ti^4 + \ti^3 + 1) = 0  \,, \quad P_0 = (\ti^5 + \ti^3 + 1, \ti^5 + \ti^2 + \ti) $$

The second example we look at has smaller degree, allowing for slightly more compact description. The rest of this section is dedicated to
$$K = \F_{2^{30}} = \F_{2^{3\cdot 10}} ,$$
which admits an elliptic presentation as an extension of 
$$\F_q = \Fqq = \F_2[\ti]/\ti^3 + \ti^2 + 1.$$
Again with enumeration we find an elliptic curve  $E/\Fqq$ and a point $P_0\in E(\Fqq)$ of order $10$, namely 
 $$
 E : Y^2 + XY + X^3 + X^2 + (\ti + 1)\,, 
  \quad P_0 = (\ti^5 + \ti^3 + 1, \ti^5 + \ti^2 + \ti) .
$$ 
 In this setting the Frobenius on $E$ is $\phi\colon (x,y)\mapsto (x^{8},y^{8})$. 
To look for points $(x,y)\in E(\ol{\F_2})$ such that  $\phi(x,y) = (x,y) + P_0$, we notice that by definition they satisfy equation 
\begin{equation*}
x^8 = \frac{x^3 + (\ti^2 + \ti + 1)x^2 + (y + \ti^2)x + (y^2 + (\ti^2 + \ti)y + (\ti + 1))}{x^2 + (\ti + 1)} ,
\end{equation*} 
together with another equation involving $y^8$.
Picking one such point and writing down the functions in $\F_q[X,Y]$ vanishing on it, we get, as in Definition \ref{def:ell_pres}, the elliptic presentation
\[
\begin{aligned}
&\qquad\qquad \qquad \; \mathfrak M = (\mu(X),\nu(X,Y)), \qquad \text{with} 
\\
\mu(X) = & \, X^{10} + (\ti^2 + \ti) X^9 + (\ti + 1) X^8 + (\ti^2 + \ti + 1) X^7 + (\ti + 1) X^6 +
\\ & \, + X^5 + (\ti^2 + \ti + 1) X^4 + \ti^2 X^3 + \ti^2 X^2 + \ti^2 X + \ti^2 \,,
\\
\nu(X,Y) = & \, Y + \ti X^{10} + (\ti^2 + \ti) X^8 + X^2 + (\ti^2 + 1) X \,.
\end{aligned}
\]
In particular, using that $\nu$ is monic in $Y$, we have
\begin{eqn}\label{eq:iso_K_M}
K = \F_{2^{30}} \cong  \Fqq[X,Y]/\mathfrak{M}  = \Fqq[X]/\mu(X).
\end{eqn} 
If we started with a different representation of $K$ (e.g. a representation $\F_2[\zeta]/p(\zeta)$), we can explicitly compute an isomorphism as above by first fixing an embedding $\Fqq \hookrightarrow K$ (see for example \cite{bill}, in practice we use  PARI/GP built-in function \texttt{ffembed}) and then finding a root of $\mu$ in $K$.

Using the isomorphism \eqref{eq:iso_K_M}, the purpose is to compute the logarithms of elements in $(\Fqq[X,Y]/\mathfrak{M})^\times/\Fqq^\times$ (the information coming from $\Fqq^\times$ can be recovered by enumeration), which can be seen as rational functions on $E$, up to scalars. As explained in Section \ref{sec:divs}, we can identify these rational functions with their divisors of zeroes and poles. Accordingly, we use a factor base made of irreducible divisors on $E$, that is divisors $D$ on $E$ that are sums of points in a $\Gal(\ol{\F_2}/\Fqq)$-orbit. 

Before listing elements in such a factor base, we recall Mumford representation for divisors on (hyper)elliptic curves, which allows us use $\Fqq[X,Y]$, without representing points in $E(\ol{\F_2})$. Either $D = \{ O_E\}$, or the points in an irreducible divisor $D$ are ``affine points'' $(x_i, y_i)$ and, since they are all Galois-conjugate, then all $x_i$'s have the same minimal polynomial $F(X)$ over $\Fqq$;  moreover the $x_i,y_i$ satisfy an algebraic relation  $G(x_i,y_i)=0$, with $G \in \F_q[X,Y]$: the Weierstrass polynomial $W(X,Y)$ is an example and, since the $x_i$'s satisfy $F(x_i)=0$, if $W(X,Y) \bmod F(X)$ is reducible, one of the factors gives a relation with degree~$1$ in $Y$. In other words $D$ is either the point at infinity or it is the vanishing locus of a maximal ideal of $\Fqq[X,Y]$ of the form 
\begin{equation}\label{eq:mum}
	(F(X),G(X,Y))  ,
\end{equation}
with $F$ irreducible and $G$ monic of degree $1$ or $2$  in $Y$ and irreducible modulo $F$. In this case, $\deg D = \deg_X(F)\cdot \deg_Y(G)$.

The factor base in the main algorithm, in Section \ref{sec_algo}, is composed of all irreducible divisors having degree $1,2,4,8,\cdots, 2^8$ and all irreducible divisors supported on the set of \emph{trap} points (see Definition \ref{def:traps}). 
For example the list of irreducible divisors of degree $1$ starts as 
\[
\{O_E\},\; (X, Y + \ti^2 + \ti) ,\;  (X + 1, Y + \ti^2),\;  (X + \ti, Y + \ti^2) \ldots, 
% (X + 1, Y + (tq^2 + 1)), (X + (tq + 1), Y + (tq^2 + 1)), (X + (tq + 1), Y + (tq^2 + tq)),  (X + tq, Y + (tq^2 + tq))
\]
and, in degree $2$ it starts as
\[
\begin{gathered}
 (X + \ti^2 {+} \ti {+} 1, Y^2 + (\ti^2 {+} \ti {+} 1) Y {+} \ti), \;  (X^2 + X + \ti^2, Y + (\ti^2 {+} 1) X + \ti^2 {+} \ti {+} 1) , 
 \\ 
 (X^2 + \ti X + (\ti^2 {+} \ti), Y + (\ti^2 {+} 1) X + \ti^2 {+} \ti)  \ldots 
 %\; (X^2 + X + tq^2, Y + (tq^2 X + (tq^2 + tq + 1))), (X^2 + X + (tq^2 + tq + 1), Y + ((tq^2 + 1) X + tq^2)), (X^2 + X + (tq^2 + tq + 1), Y + (tq^2 X + tq^2)), (X^2 + (tq + 1) X + (tq + 1), Y + tq^2 X), (X^2 + (tq + 1) X + (tq + 1), Y + (tq^2 + 1) X), (X^2 + tq X + (tq^2 + 1), Y + ((tq^2 + tq) X + tq^2)), (X^2 + tq X + (tq^2 + 1), Y + ((tq^2 + tq + 1) X + tq^2)), (X^2 + tq X + (tq^2 + tq), Y + (tq^2 X + (tq^2 + tq))), (X^2 + (tq^2 + 1) X + (tq^2 + 1), Y + ((tq + 1) X + tq)), (X^2 + (tq^2 + 1) X + (tq^2 + 1), Y + (tq X + tq)), (X^2 + (tq^2 + 1) X + (tq^2 + tq), Y + ((tq^2 + tq + 1) X + (tq + 1))), (X^2 + (tq^2 + 1) X + (tq^2 + tq), Y + ((tq^2 + tq) X + (tq + 1))), (X^2 + tq^2 X + tq, Y + ((tq + 1) X + (tq + 1))), (X^2 + tq^2 X + tq, Y + (tq X + (tq + 1))), (X^2 + tq^2 X + (tq^2 + tq), Y + ((tq^2 + tq) X + (tq^2 + tq))), (X^2 + tq^2 X + (tq^2 + tq), Y + ((tq^2 + tq + 1) X + (tq^2 + tq))), (X^2 + tq^2 X + (tq^2 + tq + 1), Y + ((tq^2 + 1) X + (tq^2 + 1))), (X^2 + tq^2 X + (tq^2 + tq + 1), Y + (tq^2*X + (tq^2 + 1))),
\end{gathered}
\]
To give a sample of the set of trap points (which is $\Gal(\ol{\F_q}/\F_{q})$-stable, hence it is a union of supports of irreducible divisors) we look at the points $Q = (x,y)$ such that $(2\phi-1)(Q) = P_0$: this condition is equivalent to 
\[
(x,y) = \left(\frac{x^{4q} {+} x^{3q} {+} x^{2q} {+} y^qx^q {+} y^{2q}}{x^{2q}}, \frac{x^{6q} {+} x^{5q} {+} y^qx^{4q} {+} x^{3q} {+} yq^{2q}x^{2q} {+} y^{3q}}{x^{3q}} \right),
\]
and after clearing denominators and searching irreducible ideals in $\Fqq[X,Y]$ containing the resulting polynomials, we find the trap divisors
\[
\begin{gathered}
 (X^4 {+} (\ti^2 {+} 1) (X^2 {+}  X {+} 1),\; Y {+} ((\ti^2 {+} \ti) X^3 {+} (\ti {+} 1) X^2 {+} \ti X {+} \ti^2 {+} 1)), 
 \\ (X^3 {+} (\ti^2 {+} 1) X^2 {+} X {+} (\ti^2 {+} \ti),\; Y {+} X^2 {+} (\ti^2 {+} \ti) X {+} \ti^2 {+} 1 ),  \ldots 
 \end{gathered}
\]
Of course, the list of all divisors in the factor base, including the traps, is longer, containing at least $\tfrac1{256} \# E(\F_{q^{256}})  > 2^{300}$ elements.

Now an example of the lift-and-descend process, the machinery used in the relation collection phase of the main algorithm. 
Notice that in our small and explanatory example, $\deg(K/\Fqq) = 10$ is smaller than $2^8$, which  is a constant of the algorithm. For the sake of this example we assume that the factor base only contains traps and divisors of degree $1,2,4$.  
Let $g,h$ elements in $K^\times$, with $g$ a generator. For example, again using the isomorphism \eqref{eq:iso_K_M} we can use the following polynomials
$$ 
g= X^3+1 \bmod \mu(X), \qquad h = X^5+\ti \bmod \mu(X).
$$
We can check that $g$ is a generator either listing its powers modulo $\mu$ or in practice we constructed an isomorphism $\F_8[X]/\mu\cong \F_{2^{30}}$, with PARI/GP's construction of $\F_{2^{30}}$, and we used built-in PARI/GP's function to check.

Following Step $3$ of the main algorithm, for random integers $a,b$ we want to write $\log_{\frakM,g}(g^ah^b)$ as the logarithm of a sum of divisors in the factor base. Take, for example $a = 2024$ and $b=2025$. As a first step we pass from $g^ah^b$ to a sum of divisors with degree a power of $2$: we first take $u_0(X) \equiv g^ah^b \,(\bmod\,\mu(x))$ of smallest possible degree and then we look for an irreducible polynomial $\tilde u(X)$ of shape $u_0(X) + (r_1X^{16-\deg(u_0)}+r_2)\mu(X)$ for random $r_1,r_2 \in \Fqq^\times$.  After 17 random tries  we find  
$$ 
\begin{aligned}
\tilde u =&\;\; (\ti {+} 1) X^{16} {+} (\ti^2 {+} \ti {+} 1) X^{15} {+} (\ti^2 {+} 1) X^{14} {+} \ti^2 X^{13} {+} (\ti^2 {+} 1) X^{12} +
\\ &\;\; {+} (\ti {+} 1) X^{11}  {+} (\ti^2 {+} 1) X^{10} {+} X^9 {+} \ti^2 X^8 {+} (\ti^2 {+} \ti {+} 1) X^7 {+} (\ti {+} 1) X^6 +
\\ &\;\; {+} (\ti^2 {+} \ti {+} 1) X^5 {+} (\ti^2 {+} 1) X^4 {+} X^3 {+} (\ti^2 {+} 1) X^2 {+} \ti X {+} 1 .
\end{aligned}
$$
Since $\tilde u \equiv u_0 \equiv g^ah^b$ modulo $\mu$, computing the irreducible components of $\divi (\tilde u)$ (we call irreducible components  of a divisor $D$ the irreducible divisors appearing with non-zero coefficient when we write $D$ as a linear combination of irreducible divisors) we get 
$$
\begin{gathered}
\log_{\frakM}(\divi g^ah^b) = \log_{\frakM}( \divi \,\tilde u) = \log_{\frakM}( D_1 +D_2 - 32 O_E)  \quad \text{with }
\\
 D_1 = (-1){\cdot } D_2 = (\tilde u(X),  Y + X^{15} {+} X^{14} {+} (\ti {+} 1) X^{13} \!{+} \ti X^{12} \!{+} (\ti^2 {+} \ti {+} 1) X^{11} \! {+} \ti X^7  
 \\ \,\,\,\,\,\, {+} (\ti^2 {+} \ti {+} 1) X^{10} {+}  \ti^2 X^9   {+} \ti X^5 {+} (\ti^2 {+} \ti {+} 1) X^4 {+} (\ti^2 {+} \ti) X^3 {+} \ti X^2 {+} \ti X {+} \ti {+} 1 )  ,
 \\ \text{irreducible of degree $16$.}
\end{gathered}
$$
%where $D_1$ is  and $D_2$ is equal to $-D_1$ i.e. it is the image of $D_1$ under $-1\colon E \to E$.
The irreducibility of $\tilde u$ a priori implies $\divi (\tilde u) +32 O_E$   is either irreducible or the sum of two irreducible divisors of degree $16$.

Next step is descent: finding a divisor $D'$ satisfying  
%$\log(\divi \,\tilde u) = \log D'$ 
$\log_{\frakM}(D_1) = \log_{\frakM} D'$ 
and $\essdeg_{\F_q} D' \mid 8$, i.e the irreducible components of $D'$ either have degree dividing $8$ or are supported on the set of traps.

%We focus on the component $D_1$ of $\divi(f_1)$. 
To find $D'$, we 
base change to 
$$
k =  \F_{q^{{16}/4}} \cong \bigslant{\F_2[\xi]}{(\xi^{12} {+} \xi^{11} {+} \xi^{10} {+} \xi^9 {+} \xi^8 {+} \xi^7 {+} \xi^6 {+} \xi^5 {+} \xi^4 {+} \xi^3 {+} \xi^2 {+} \xi {+} 1 )},
% = \F_{2^{24}}
$$
so that $D_1$ is the sum of the $\Gal(k/\F_q)$-conjugates of the following divisor, irreducible over $k$
$$
D_{1,k} = (X^4 + a_3X^3 +  a_2X^2 + a_1X + a_0, Y + b_3X^3 + b_2X^2 + b_1 X + b_0 ),
$$
where the coefficients are, 
%$$
%\begin{gathered}
$a_3 = \xi^6 {+} \xi^3 {+} \xi^2 {+} \xi$, 
%\quad
$a_2 = \xi^{10} {+} \xi^7 {+} \xi^5 {+} \xi^4 {+} \xi^3$, 
%\quad
 $a_1 = \xi^{10} {+} \xi^9 {+} \xi^8 {+} \xi^5$, 
%\\
$a_0 = \xi^{11} {+} \xi^{10} {+} \xi^9 {+} \xi^8 {+} \xi^7 {+} \xi^4 {+} \xi^3 {+} \xi^2 {+} \xi$, 
%\quad
$b_3 = \xi^6 {+} \xi^4 {+} \xi^2$, 
$b_2 = \xi^8 {+} \xi^4 {+} \xi^3 {+} \xi^2 {+} \xi {+} 1$, 
%\
\\
$b_1 = \xi^{11} {+} \xi^9 {+} \xi^8 {+} \xi^7 {+} \xi^6 {+} \xi^3 {+} \xi$, 
%\quad
and 
$b_0 = \xi^{11} {+} \xi^9 {+} \xi^8 {+} \xi^7 {+} \xi^6 {+} \xi^5 {+} \xi^3 {+} \xi$. 
This divisor $D_{1,k}$ corresponds to $\tilde D$ in \eqref{eq:g=1_onGaloisconjugates}.

% (X^2 {+} a_1 X + a_0, Y^2 + XY + X^3 + X^2+ \xi^{8} + \xi ) \quad \text{with }
%\\ a_1 = \xi^{21} {+} \xi^{18} {+} \xi^{17} {+} \xi^{16} {+} \xi^{14} {+} \xi^{13} {+} \xi^{12} {+} \xi^{11} {+} \xi^{10} {+} \xi^8 {+} \xi^5 {+} \xi^4 {+} \xi^3 {+} \xi^2 {+} 1 \,,
%\\ a_2  = \xi^{23} {+} \xi^{22} {+} \xi^{18} {+} \xi^{16} {+} \xi^{13} {+} \xi^{12} {+} \xi^9 {+} \xi^8 {+} \xi^7 {+} \xi^6 {+} \xi^5 {+} \xi^3 {+} \xi^2 {+} \xi \,.
%\end{gathered}
%$$

Following Section \ref{sec:idea_descent}, the strategy to compute a possible $D'$ is to find a divisor $D'_{k}$ defined over $k$ that has the same log as $D_{1,k}$ (in the sense of \eqref{eq:samelog} and of Propositions \ref{firsthalfdescentprop} and \ref{secondhalfdescentprop}) and whose irreducible components over $k$ have degree either  $1$ or $2$, so that we can take $D'$ equal to the sum of all $\Gal(k/\F_q)$-conjugates of $D'_k$. We look for $D'_k$ in two steps, first finding an analogous $D''_k$ whose irreducible components of degree $1,2$ or $3$ and then finding $D'_k$ from $D''_k$. 

In our example, following  Section \ref{sec:43},  we found
\begin{equation*} %\label{eq:calD'_k}
\begin{aligned}
D''_k & =  D_{1,k} - \divi \left( g \right), \quad
\text{with}\quad
 g = \tfrac{( cf + d)(f^{\phi}\circ \tau_{P_0}) + af + b}{  cf^{q+1}+  df^q + af + b}, \quad f = \frac{ \tfrac{Y-y(P)}{X-x(P)} + \alpha }{ \tfrac{Y-y({\widetilde P})}{X-x({\widetilde P})} + \beta} \,,
\end{aligned}
\end{equation*}
and with the values
\begin{equation*}%\label{eq:example_43_data}
\begin{gathered}
a= \xi^9 {+} \xi^8 {+} \xi^7 {+} \xi^6 {+} \xi^4 {+} \xi^2 {+} 1,  \quad
b= \xi^{11} {+} \xi^{10} {+} \xi^9 {+} \xi^6 {+} \xi^5 {+} \xi^3 {+} \xi {+} 1,
\quad  c = 1,
\\   
d =  \xi^{10} {+} \xi^6 {+} \xi^5 {+} \xi^2 , \quad \alpha = \xi^{11} {+} \xi^{10} {+} \xi^9 {+} \xi^8 {+} \xi^7 , \quad \beta =  \xi^{10} {+} \xi^7 {+} \xi^4 {+} \xi^3 {+} \xi^2 {+} 1 ,
\\
P = (\xi^{10} {+} \xi^7 {+} \xi^6 {+} \xi^5 {+} \xi^3 {+} 1, \; \xi^{11} {+} \xi^9 {+} \xi^7 {+} \xi^3 {+} \xi), \quad  
\\ 
\widetilde{P} = (\xi^{11} {+} \xi^{10} {+} \xi^9 {+} \xi^8 {+} \xi^4 {+} \xi^3 {+} \xi^2, 
\; \xi^7 {+} \xi^6 {+} \xi^5 {+} \xi^4 {+} \xi^3 {+} 1).
\end{gathered}
\end{equation*}
Following Sections \ref{sec:idea_descent} and \ref{sec:43}, we explain why such $f$ and $D'_k$ work and how we computed them. 
No matter who $f$ and $a,b,c,d$ are, except for some restriction that is generically true, the definition of $D''_k$ and $g$  ensures that $D'_k$ and $D_{1,k}$ have the same logarithm (see Section \ref{sec:idea_descent}, above Equation \ref{eq:samelog}). 
Moreover all functions $f\colon E \to \P^1$ with the above shape have degree at most $3$,
since their ``numerator'' and ``denominator'' both have degree $2$ and a simple pole at infinity. 
Then, we looked for $P$, $\widetilde P$, $\alpha$, $\beta$, $a$, $b$, $c$ and $d$ such that the ``numerator'' of $g$ vanishes on $D_1$ and the polynomial $cT^{q+1}+dT^q+aT+b$ has all its roots $r_i$ in $k$: in this way we can be sure that  the irreducible components of $D_k''$ have degree $\le 3$ since the ``denominator'' of $g$ is the product of functions $f-r_i$ and the ``numerator'' of $g$ has degree at most $6$, so all its zeroes, except from $D_{1,k}$, are contained in irreducible divisors of degree $\le 2$, while its poles are also poles of $f$ or of $f^\phi\circ \tau_{P_0}$, which have degree $3$. 
These conditions about $P$, $\widetilde P$, $\alpha$, $\beta$, $a$, $b$, $c$ and $d$ are polynomial conditions (see Section \ref{subsec:def43}), hence to compute these points we essentially looked for $k$-rational points on a certain variety. In practice we first fixed a point $\widetilde P$ (a generic one is good, following Section \ref{subsec:def43}), then picked random values for $x_P$ and $\alpha$ and tried to solve for the other variables satisfying the relevant polynomial equations (see the second part of Section \ref{subsec:points_43}) until we found a point. We found the points $P,\alpha,\beta,a,b\ldots$ above after trying $616$ random values $(x,\alpha)$.

Notice that in Section \ref{sec:43}, to prove Proposition \ref{secondhalfdescentprop}, we use two additional hypothesis that we still haven't remarked in the computation of $D''$: the first one is $[k:\F_q]\ge 80$, the second that $D_{1,k}$ is not supported on the traps. The second hypothesis is true in our example, while, for better readability and because of computational constraints, we are using a small example, of degree smaller than $80$. We were anyway able to apply the procedure in Section \ref{sec:43} and find $D''_k$.

After $D''_k$, we compute $D'_k$ by considering the irreducible components of $D''_k$ that have degree $3$ and applying to them a similar procedure (actually simpler, see Section \ref{sec:3-2}) to the one we applied to $D_{1,k}$. Applying a ``norm'' we get $D'$, the result of the descent procedure, which has 80 irreducible components, and starts as
\[
\begin{aligned}
D' & =  \!\!\!\!\!\!  \sum_{\sigma \in \Gal(k/\F_q)} \!\!\!\!\!\! \!\! \sigma(D'_k) 
%\\ & 
=  -(X^8 {+} (\ti^2 {+} 1) X^7 {+} (\ti^2 {+} 1) X^6 {+} (\ti {+} 1) X^5 {+} (\ti {+} 1) X^4 {+} 
\ldots
%(\ti {+} 1) X^3 {+} X^2 {+} (\ti {+} 1) X {+} (\ti {+} 1), Y {+} (\ti^2 {+} \ti {+} 1) X^7 
% {+} \ti^2 X^6 {+} (\ti^2 {+} 1) X^5 {+} \ti^2 X^4 {+} X^3 {+} \ti X^2 {+} \ti^2 {+} \ti) 
\end{aligned}
\]
We can apply the same descent step to $D_2$, so that, after adding $D'- 32\,O_E$, we find a divisor of essential degree $8$ (or a divisor) with the same log as $\divi(\widetilde u)$ hence as $g^a h^b$. 
Repeating the descent on the result we find a divisor $\mathcal D$ whose logarithm is equal to $\log_{\frakM,g}(g^a h^b)$ and whose (non-trap) irreducible components have degree dividing $4$. In other words $\mathcal D$ is a linear combination of elements of the factor base (recall in this example we are using a ``mock factor base'', smaller than in the main algorithm, in Section \ref{sec_algo}). 
Replaying this game for various pairs  $(a,b)$ we could collect enough relations and try to perform linear algebra to compute $\log_{\frakM,g}(h)$ as usual. 
%We don't perform this steps, both for computation ;since (the implementation of) the algorithm is not efficient enough

%We don't show it but such a step should then be repeated until we reduce to a $\mathcal D_0$ supported on the factor base to collect a relation. 
%In the ``small'' example we are looking 

\bigskip

\paragraph{Funding} The author was supported by the MIUR Excellence Department Projects MatMod@TOV and Math@TOV awarded to the Department of Mathematics, University of Rome Tor Vergata, the ``National Group for Algebraic and Geometric Structures, and their Applications" (GNSAGA - INdAM), the PRIN PNRR 2022 ``Mathematical Primitives for Post Quantum Digital Signatures" and by ``Programma Operativo Nazionale (PON) Ricerca e Innovazione” 2014-2020.
% “Finanziato dall’Unione europea- Next Generation EU, Missione 4 Componente 1 CUP ”.

%Prin 2022 Moduli Spaces and Birational Geometry  
% the MIUR ``Excellence Department Project MATH@TOV'',
\paragraph{Acknowledgements}
\addcontentsline{toc}{section}{Acknowledgements}
I thank Ren\'e Schoof for introducing me to this research problem and for the useful ideas that lead to substantial simplifications of the proof. 

\bibliographystyle{elsarticle-num} 
\bibliography{biblio}

\end{document}